\pdfoutput=1
\documentclass[depthtwo,microtype]{nicestyle}

\setcounter{tocdepth}{1}

\author{\textsc{Ulrich Thiel}} 
\title{\textsc{Decomposition matrices are generically trivial}} 
\date{}

\raggedbottom

\usepackage[square,sort,comma,numbers]{natbib}

\begin{document}

\maketitle 
\fancyhead[RE]{\footnotesize\textsc{Decomposition matrices are generically trivial}}
\fancyhead[LO]{\footnotesize\textsc{Ulrich Thiel}}
\fancyhead[LE,RO]{\footnotesize\thepage}
\thispagestyle{empty}

\begin{abstract}
We establish a genericity property in the representation theory of a flat family of finite-dimensional algebras in the sense of Cline–Parshal–Scott. More precisely, we show that the decomposition matrices as introduced by Geck and Rouquier of an algebra which is free of finite dimension over a noetherian integral domain and which splits over the fraction field of this ring are generically trivial, i.e., they are trivial in an open neighborhood of the generic point of the spectrum of the base ring. This generalizes a classical result by Brauer in modular representation theory of finite groups. We furthermore show that this is true precisely on an open set in case all fibers of the algebra split. In this way we get a stratification of the base scheme such that decomposition maps are trivial on each stratum. Moreover, this defines a new discriminant of such algebras which generalizes Schur elements of simple modules for symmetric split semisimple algebras. We  provide some extensions to the theory of decomposition maps allowing us to work without the usual normality assumption on the base ring.
\end{abstract}

\blfootnote{Date: Jun 15, 2015 (first version: February 20, 2014). Final version to appear in Int. Math. Res. Not.} 
\blfootnote{\textsc{Ulrich Thiel}, Universität Stuttgart, Fachbereich Mathematik, Institut für Algebra und Zahlentheorie, Lehrstuhl für Algebra, Pfaffenwaldring 57, 70569 Stuttgart, Germany.} \blfootnote{Email: \texttt{thiel@mathematik.uni-stuttgart.de}}

\section*{Introduction}

In modular representation theory of finite groups the technique of \textit{$p$-modular reduction} is well-established and an important tool. Among others, it leads to the notion of \textit{decomposition matrices} and the \textit{Brauer–Cartan triangle}. This technique has been extended by Geck and Rouquier \cite{GR-Centers-Simple-Hecke} to a decent class of algebras over integral domains. It became an important tool for studying algebras involving parameters, so for example Hecke algebras (see \cite{GP-Coxeter-Hecke}, \cite{Geck.M;Jacon.N11Representations-of-H}, and \cite{Chlouveraki:2009aa}) and, more recently, rational Cherednik algebras (see \cite{Bonnafe.C;Rouquier.R13Cellules-de-Calogero}, \cite{Ginzburg:2003aa}, and \cite{Thiel:2015aa}). We list several further examples in \S\ref{examples}. In modular representation theory of finite groups it is a classical fact due to Brauer that the decomposition matrices of a fixed group are trivial for all but finitely many prime numbers (only for prime numbers dividing the group order they are non-trivial). We will show here that an appropriate version of this phenomenon still holds in the extended setting (see \S\ref{main_theorems}). In contrast to the classical fact, this result is of a more geometric nature.   

To explain this extension, let $A$ be an algebra which is free and of finite dimension over an integral domain $R$ with fraction field $K$ (this will be our usual setup). We emphasize the geometric point of view in which the prime ideals of $R$ can be considered as \textit{parameters} for algebras derived from $A$. Namely, we can \word{specialize} the algebra $A$ in a prime ideal $\fp$ of $R$ by passing to the algebra $A(\fp) =\rk(\fp) \otimes_R A \cong \rk(\fp) \otimes_{R/\fp} A/\fp A$, where $\rk(\fp) \dopgleich \mrm{Frac}(R/\fp)$ is the fraction field of $R/\fp$. In more geometric terms the fibers are the scalar extensions to the residue fields of the stalks of the locally free sheaf of algebras over $\Spec(R)$ defined by $A$. Specializing essentially means plugging in parameters into $A$ in a general sense (think for example of a Brauer, Hecke, or Cherednik algebra for a specific choice of parameters as being obtained from an appropriate generic version of these algebras via specialization, see \S\ref{examples} for examples illustrating this). The ultimate aim is to understand the representation theory (or some specific part of it) of all fibers of $A$. The correct approach to this problem—following the philosophy of Grothendieck and what Cline–Parshall–Scott \cite{Cline:1999aa} coined \textit{generic representation theory}—is to understand how the representation theory of $A(\fp)$ varies with $\fp$. To this end, it suffices to  understand how the representation theory of a special fiber $A(\fp)$ relates to the one of the only distinguished fiber of $A$, the \textit{generic fiber} $A(0) = A^K$, where $A^K = K \otimes_R A$. The hope is that this relation is generically strong so that once we understand the generic fiber $A^K$, we understand Zariski almost all fibers. 

Decomposition maps are the right tool to study how simple modules of the fibers vary with $\fp$, more precisely how they relate to the simple modules of the generic fiber. Geck and Rouquier constructed (under some assumptions on $A$ and $R$ we ignore in the introduction, see \S\ref{decomposition_morphisms}) for any prime ideal $\fp$ of $R$ a morphism $\rd_A^\fp:\rG_0(A^K) \rarr \rG_0(A(\fp))$ between Grothendieck groups, the so-called \textit{decomposition map} of $A$ in $\fp$. This map generalizes $\fp$-modular reduction of modules, and thus generalizes the classical $p$-modular reduction in modular representation theory of finite groups. In case $\rd_A^\fp$ is \textit{trivial} in the sense that it induces a bijection between the simple modules, the special fiber $A(\fp)$ has essentially the same simple modules as $A^K$—in particular the number of simple modules and their dimensions are the same. The central question we want to study here is in line with the philosophy explained above: are the decomposition maps for $A$ generically trivial, i.e., is $\rd_A^\fp$ trivial for all $\fp$ in an open neighborhood of the generic point $(0)$ of $\Spec(R)$? Our first main result (see \S\ref{main_theorems}) is that this is indeed true—provided the base ring $R$ is noetherian and $A$ has split generic fiber $A^K$. Our second main result (see \S\ref{main_theorems}) is that in case all fibers of $A$ are split, then the decomposition maps are trivial precisely on an open subset of $\Spec(R)$.

Generic triviality of decomposition maps was so far only known in the following two general cases:
\begin{enum_proof}
\item if $R$ is normal, noetherian, and one-dimensional, and the generic fiber of $A$ splits.
\item if $R$ is normal, and $A$ is symmetric and its generic fiber is split semisimple.
\end{enum_proof}
The first case is due to Geck \cite{Geck.M98Representations-of-H} who has given a general formulation of James's conjecture \cite{James:1990aa} using the notion of trivial decomposition maps and proves generic triviality of decomposition maps in this context under these assumptions to provide a heuristic supporting this conjecture. The second case uses the theory of Schur elements and follows from Tits's deformation theorem (see \cite{GP-Coxeter-Hecke}). These two cases are quite restrictive for certain applications, however. Restricted rational Cherednik algebras (see \cite{Gordon:2003aa}, \cite{Bonnafe.C;Rouquier.R13Cellules-de-Calogero}, and \cite{Thiel:2015aa}) for example are not covered by these results as in general neither the base rings are one-dimensional nor the generic fiber is semisimple (even though it is symmetric). Moreover, even results covered by the setting of Schur elements have the problem that we cannot apply them to restrictions $A/\fp A$ for prime ideals $\fp$ in general (what we would like to do to continue studying the fibers of $A$ on the locus where $\rd_A^\fp$ is non-trivial): on the one hand the generic fiber $A(\fp)$ of $A/\fp A$ will not be semisimple anymore if $\rd_A^\fp$ is non-trivial, and on the other hand the base ring $R/\fp$ of $A/\fp A$ may not be normal anymore. 

We resolve these difficulties here. Our proof of generic triviality is based on Geck's arguments in the one-dimensional setting accompanied by a theorem by Grothendieck on the existence of dominating discrete valuation rings. As we want to avoid the usual assumption on the normality of the base ring, we provide some extensions to the theory of decomposition maps. The effect of dropping this assumption is that there might be more than one decomposition map for a given prime—but we will show that if one of these is trivial, all of them are trivial and so the situation is indeed as nice as possible. The openness result (our second main theorem) is obvious in the one-dimensional setting but was not even known in the setting of Schur elements. Our proof is based on a recursive application of the genericity property and Grothendieck's notion of ind-constructible subsets of schemes. This allows us to stratify the base scheme $\Spec(R)$ into strata on which decomposition maps are trivial. In particular, we only have to study finitely many fibers of $A$ to understand all of them. Moreover, it allows us to attach a new form of a discriminant to  algebras encoding where decomposition maps are non-trivial. We show that this discriminant generalizes Schur elements of simple modules for symmetric split semisimple algebras. \\

The outline of this paper is as follows. In \S\ref{decomposition_morphisms} we review the theory of decomposition maps and discuss several extensions to the usual expositions in the literature with the aim to remove the normality assumption on the base ring. In \S\ref{main_theorems} we formulate our main results and illustrate them by some examples. Their proofs occupy the rest of the paper. In \S\ref{connection_with_jac} we show that triviality of decomposition maps is related to a certain behavior of the Jacobson radical under specialization, which we study in \S\ref{jacobson_behavior} preceded by a study of the split locus in \S\ref{split_locus}. Finally, in \S\ref{generic_representation_theory} we formalize the notion of properties of algebras and discuss some general tools which applied to our setting prove the openness result mentioned above. We finish with some open questions in \S\ref{questions}.

\begin{ack}
I am very thankful to Meinolf Geck and Gunter Malle for several comments on a preliminary version of this article. I am also thankful to Burkhard Külshammer for asking some enlightening questions after a talk I have given on this topic. Part of this work was supported by the \textit{DFG Schwerpunktprogramm ``Darstellungstheorie'' 1388} and by the \textit{DFG Schwerpunktprogramm ``Algorithmische und experimentelle Methoden in Algebra, Geometrie und Zahlentheorie'' 1489}.
\end{ack}

%\begin{spacing}{0.9} 
\tableofcontents
%\end{spacing}

\section{Decomposition maps} \label{decomposition_morphisms}

Our central object of study is the decomposition map introduced by Geck and Rouquier \cite{GR-Centers-Simple-Hecke}. As indicated in the introduction, we aim to drop the usual assumption that the base ring is normal. To this end, we will need to extract and generalize several arguments in loc. cit. (and in the general exposition on decomposition maps by Geck and Pfeiffer \cite{GP-Coxeter-Hecke}). The main result of this section is Theorem \ref{all_decs_are_discrete} which states that for a \textit{noetherian} base ring \textit{any} decomposition map can be realized by a \textit{discrete} valuation ring. We argue that this follows from the fact that decomposition maps in a prime $\fp$ depend only on the primes lying over $\fp$ in the ``Brauer–Nesbitt attractor'', a certain ring contained in the normalization of the base ring. Combined with a result by Grothendieck on the existence of dominating discrete valuation rings this implies the theorem. 

\subsection{Specializations}
Let $R$ be an integral domain with quotient field $K$ and let $A$ be an $R$-algebra. We denote by $A^S = S \otimes_R A$ the scalar extension of $A$ to an $R$-algebra $S$ along the canonical morphism $R \rarr S$. For a prime ideal $\fp$ of $R$ we denote by $\rk(\fp)$ the residue field of $R$ in $\fp$, i.e., the fraction field of $R/\fp$, and we call
\[
A(\fp) \dopgleich \rk(\fp) \otimes_R A \cong \rk(\fp) \otimes_{R/\fp} A/\fp A \cong \rk(\fp) \otimes_{R_\fp} A_\fp
\]
the \word{specialization} (or \word{fiber}) of $A$ in $\fp$. This is a $\rk(\fp)$-algebra. In more geometric terms, $A(\fp)$ is the scalar extension to $\rk(\fp)$ of the stalk at $\fp$ of the quasi-coherent $\sO_X$-algebra on the affine scheme $X \dopgleich \Spec(R)$ defined by $A$. We call the specialization $A(0) = A^K$ in the generic point $(0)$ of $\Spec(R)$ the \word{generic fiber} of $A$. We denote by $\theta_A^\fp:A \rarr A(\fp)$ the canonical morphism. Moreover, we define $A|_\fp \dopgleich R/\fp \otimes_R A \cong A/\fp A$, which can be considered as the \word{restriction} of $A$ to the closed subscheme $\rV(\fp)$ of $\Spec(R)$, where $\rV(\fp)$ denotes the closure of $\fp$ in $\Spec(R)$. 

In \S\ref{examples} we discuss several explicit examples illustrating that specialization essentially means plugging in parameters for an algebra whose definition involves parameters (like Brauer, Hecke, or rational Cherednik algebras). 
For most of our results (and for the existence of decomposition maps already) we need to assume that $A$ is an $R$-algebra which is free and of finite dimension as an $R$-module. We will simply say that $A$ is \word{finite free} in this case. The specialization $A(\fp)$ is then of the same dimension over $\rk(\fp)$ as $A$ is over $R$ and $A|_\fp$ is a free $(R/\fp)$-algebra, also of the same dimension.

\begin{leftbar}
\mbox{}\vspace{8pt}

\noindent Throughout, we assume that $R$ is an integral domain with fraction field $K$ and that $A$ is finitely generated and free as an $R$-module (we say that $A$ is finite free for short). \\
\end{leftbar}

\subsection{Gates}

Decomposition maps yield a connection between the generic fiber $A^K$ and a special fiber $A(\fp)$. This connection is set up by an an auxiliary ring in $K$ dominating $R_\fp$. We formalize this in the following definition which we prefix by an elementary lemma. Throughout, we denote by $\rG_0(A(\fp))$ the \word{Grothendieck group} of $A(\fp)$, i.e., the zeroth $\rK$-group of the category of finite-dimensional $A(\fp)$-modules. Recall that this is the free abelian group with basis given by the classes of simple $A(\fp)$-modules.

\begin{lemma} \label{d_field_ext_injective}
Let $A$ be a finite-dimensional algebra over a field $K$ and let $\theta:K \hookrightarrow L$ be an extension field of $K$. Then the induced morphism $\rd_A^\theta:\rG_0(A) \rarr \rG_0(A^L)$ is injective and we can thus view $\rG_0(A)$ as a subgroup of $\rG_0(A^L)$.
\end{lemma}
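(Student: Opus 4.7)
The plan is to prove injectivity by reducing to the case where $A$ is semisimple, and then exploiting the Wedderburn decomposition directly. Since $\rG_0(A)$ is the free abelian group on the classes $[S_1],\ldots,[S_n]$ of the pairwise non-isomorphic simple $A$-modules, injectivity of $\rd_A^\theta$ amounts to $\mathbb{Z}$-linear independence in $\rG_0(A^L)$ of the images $[L\otimes_K S_i]$. Let $\bar{A}=A/J(A)$ be the semisimple quotient. Simple $A$-modules are annihilated by $J(A)$ and hence are $\bar{A}$-modules, so pullback identifies the simple modules of $A$ and of $\bar{A}$, yielding a canonical isomorphism $\rG_0(A)\cong\rG_0(\bar{A})$. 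On the other side, flatness of $L/K$ gives $\bar{A}^L=A^L/(J(A)\otimes_K L)$, and the two-sided ideal $J(A)\otimes_K L$ of $A^L$ is nilpotent, hence contained in $J(A^L)$. Consequently every simple $A^L$-module factors through $\bar{A}^L$, and conversely every simple $\bar{A}^L$-module remains simple after pullback to $A^L$, giving $\rG_0(A^L)\cong\rG_0(\bar{A}^L)$ in a way compatible with the extension-of-scalars map. The whole question therefore reduces to injectivity of $\rd_{\bar{A}}^\theta$ for the semisimple algebra $\bar{A}$.

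Assuming henceforth that $A$ itself is semisimple, I would invoke Wedderburn to write $A=\prod_{i=1}^n A_i$ with each $A_i$ a simple $K$-algebra having $S_i$ as its unique (up to isomorphism) simple module. Extension of scalars respects finite products, so $A^L=\prod_i A_i^L$, and a standard fact about modules over product rings shows that each simple $A^L$-module is supported on exactly one factor; consequently $\rG_0(A^L)=\bigoplus_i\rG_0(A_i^L)$. The module $S_i^L=L\otimes_K S_i$ is annihilated by $A_j^L$ for $j\neq i$, so it is an $A_i^L$-module and $\rd_A^\theta([S_i])$ lies entirely in the $i$-th summand. Finally $S_i^L$ is nonzero and finite-dimensional over $L$, so its class in the free abelian group $\rG_0(A_i^L)$ is a positive integral combination of simple classes and in particular nonzero. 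Since the images of distinct basis elements lie in distinct free summands and each is nonzero, they are $\mathbb{Z}$-linearly independent, which is injectivity.

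The main technical subtlety lies in the reduction step rather than in the Wedderburn step: one must verify that the apparent commutative square linking $\rG_0$ of $A$, $\bar A$, $A^L$, $\bar A^L$ really does commute, and that $\rG_0(A^L)\cong\rG_0(\bar A^L)$ holds despite $\bar A^L$ possibly failing to be semisimple when the extension $L/K$ is inseparable. Both points are controlled by the two observations that $J(A)\otimes_K L$ is a nilpotent two-sided ideal of $A^L$, and that simple modules on both sides are precisely those on which this ideal acts trivially. Once these are in hand, the rest of the argument is essentially formal.
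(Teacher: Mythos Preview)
Your proof is correct. Both your argument and the paper's rest on the same key fact: for distinct simple $A$-modules $S_i$ and $S_j$, the $A^L$-modules $S_i^L$ and $S_j^L$ share no simple constituent. The paper obtains this in one line by citing \cite[7.13]{Lam-Noncommutative} and then reads off injectivity from the resulting block-echelon shape of the decomposition matrix. You instead unpack that citation: you first reduce to the semisimple case via the Jacobson radical (carefully noting that $J(A)\otimes_K L$ is nilpotent even when $\bar A^L$ may fail to be semisimple), and then use the Wedderburn product decomposition to see directly that each $[S_i^L]$ lands in its own direct summand of $\rG_0(A^L)$. So the two arguments are really the same proof at different levels of detail; yours is self-contained, the paper's is a one-line appeal to a standard reference.
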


\begin{proof}
Let $(S_i)_{i \in I}$ be a system of representatives of the simple $A$-modules. For each $i \in I$ let $(T_{ij})_{j \in J_i}$ be a system of representatives of the simple constituents of the $A^L$-module $S_i^L$. Then $(T_{ij})_{i \in I, j \in J_i}$ is a system of representatives of the simple $A^L$-modules by \cite[7.13]{Lam-Noncommutative}. Hence, the matrix $\rD_A^\theta$ of the morphism $\rd_A^\theta$ in suitably sorted natural bases is in column echelon form, has no zero columns and has no zero rows. In particular, $\rd_A^\theta$ is injective.
\end{proof}

\begin{definition} \label{gate_definition}
If $\fp$ is a prime ideal of $R$, let us call a \word{weak $A$-gate} in $\fp$ any local ring $\sO$ between $R$ and $K$ with maximal ideal $\fm$ such that:
\begin{enum_thm}
\item $R \cap \fm = \fp$,
\item any finite-dimensional $A^K$-module $V$ has an $\sO$-free $A^\sO$-form, i.e., an $\sO$-free $A^\sO$-module $\wt{V}$ such that $A^K \otimes \wt{V} \cong V$ as $A$-modules via the canonical morphism.
\end{enum_thm}
We define an \word{$A$-gate} in $\fp$ to be a weak $A$-gate in $\fp$ satisfying the following additional property:
\begin{enum_thm}[resume]
\item \label{gate_definition_G} the reductions $\wt{V}/\fm \wt{V}$ of finitely generated $\sO$-free $A^\sO$-modules $\wt{V}$ are contained in the subgroup $\rG_0(A(\fp))$ of $\rG_0(A^\sO(\fm))$.
\end{enum_thm}
\end{definition}

An $A$-gate is precisely designed for producing a morphism $\rG_0(A^K) \rarr \rG_0(A(\fp))$ generalizing reduction modulo $\fp$. Before we discuss this let us first record a few observations. What is immediately clear from the definitions is the following.

\begin{lemma}
If $\sO$ is a weak $A$-gate in $\fp$, then any local ring in $K$ dominating $\sO$ is also a weak $A$-gate in $\fp$. 
\end{lemma}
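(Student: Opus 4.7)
The plan is to verify directly that a local ring $\sO'$ in $K$ dominating $\sO$ satisfies both clauses of Definition \ref{gate_definition}.

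For clause (i), let $\fm'$ denote the maximal ideal of $\sO'$. By definition of domination, $\sO \subseteq \sO'$ and $\sO \cap \fm' = \fm$. Since $R \subseteq \sO$ by hypothesis, this gives
\[
R \cap \fm' = R \cap (\sO \cap \fm') = R \cap \fm = \fp,
\]
using clause (i) for $\sO$. This is purely formal.

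For clause (ii), given a finite-dimensional $A^K$-module $V$, apply clause (ii) for $\sO$ to obtain an $\sO$-free $A^\sO$-form $\wt{V}$, i.e., an $\sO$-free $A^\sO$-module with $A^K \otimes_{A^\sO} \wt{V} \cong V$ (equivalently $K \otimes_\sO \wt{V} \cong V$ as $A$-modules). Set $\wt{V}' \dopgleich \sO' \otimes_\sO \wt{V}$. Since base change preserves freeness, $\wt{V}'$ is $\sO'$-free; and it carries a natural $A^{\sO'}$-module structure. Transitivity of base change then gives
\[
K \otimes_{\sO'} \wt{V}' \,\cong\, K \otimes_{\sO'} \sO' \otimes_\sO \wt{V} \,\cong\, K \otimes_\sO \wt{V} \,\cong\, V
\]
as $A$-modules, so $\wt{V}'$ is an $\sO'$-free $A^{\sO'}$-form of $V$.

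There is essentially no obstacle here: the whole content lies in unpacking the definition of domination and using that free modules remain free under base change. The statement is really preparatory, ensuring that the collection of weak $A$-gates in $\fp$ is closed upward under the domination order, which is exactly what one needs later in order to invoke Grothendieck's theorem on dominating discrete valuation rings when proving Theorem \ref{all_decs_are_discrete}.
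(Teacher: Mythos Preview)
Your proof is correct and matches the paper's approach: the paper does not even write out a proof, simply prefacing the lemma with ``What is immediately clear from the definitions is the following,'' and your argument is exactly the unpacking of definitions that this sentence alludes to.
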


We will see in Lemma \ref{dominating_gates} that the same also holds for $A$-gates but this is not trivial any more. It is a standard commutative algebra fact (see for example \cite[5.1]{Gol-Character-Theory}) that there always exists a valuation ring in $K$ dominating $R_\fp$. This follows essentially from the fact that valuation rings are precisely maximal elements in the set of local subrings of a field (see \cite[VI.1.2]{Bou-Commutative-Algebra-1-7}). It is a further standard fact (see \cite[7.3.7]{GP-Coxeter-Hecke}) that such a valuation ring satisfies the second property in Definition \ref{gate_definition}. This follows essentially from the fact that finitely generated torsion free modules over a valuation ring are already free (see \cite[5.2]{Gol-Character-Theory}). This proves the following lemma.

\begin{lemma}
In any $\fp$ there is a weak $A$-gate. In fact, any valuation ring in $K$ dominating $R_\fp$ is a weak $A$-gate. 
\end{lemma}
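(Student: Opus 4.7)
The plan is to construct a weak $A$-gate explicitly as a valuation ring of $K$ dominating $R_\fp$ and to verify conditions (i) and (ii) of Definition \ref{gate_definition} directly. Both ingredients are essentially quoted by the author in the paragraph preceding the statement, so the task is to combine them correctly.

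First I would invoke the standard fact that any local subring of a field admits a dominating valuation ring, which follows from the characterization of valuation rings as the maximal elements of the poset of local subrings of $K$ under domination (\cite[VI.1.2]{Bou-Commutative-Algebra-1-7}, cf.\ also \cite[5.1]{Gol-Character-Theory}). Applied to $R_\fp \subseteq K$, this produces a valuation ring $\sO \subseteq K$ with maximal ideal $\fm$ such that $\fm \cap R_\fp = \fp R_\fp$, and contracting further along $R \hookrightarrow R_\fp$ gives $\fm \cap R = \fp$. This settles condition (i).

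For condition (ii) let $V$ be a finite-dimensional $A^K$-module. Fix a $K$-basis $v_1,\dots,v_n$ of $V$ and an $R$-basis $a_1,\dots,a_d$ of $A$, and let $\wt V$ be the $\sO$-submodule of $V$ generated by the finite set $\{a_i v_j\}$. Since the structure constants of $A$ over $R$ (hence over $\sO$) express every product $a_k (a_i v_j)$ as an $\sO$-linear combination of the generators, $\wt V$ is stable under $A^\sO$, so it is a finitely generated $A^\sO$-submodule. Being a submodule of a $K$-vector space, $\wt V$ is torsion-free over $\sO$; and the classical fact that finitely generated torsion-free modules over a valuation ring are free (\cite[5.2]{Gol-Character-Theory}) shows that $\wt V$ is $\sO$-free. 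Finally, the generators $a_i v_j$ already span $V$ over $K$, so the canonical map $A^K \otimes_\sO \wt V \to V$ is surjective, and by dimension count an isomorphism. Thus $\wt V$ is an $\sO$-free $A^\sO$-form of $V$, establishing condition (ii).

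There is no real obstacle: the two commutative algebra inputs are classical and the author has flagged both of them. The only small piece of bookkeeping is making sure the generating set chosen for $\wt V$ is simultaneously $A^\sO$-stable and generates $V$ over $K$, which is arranged by taking products of an $R$-basis of $A$ with a $K$-basis of $V$. Combining the two paragraphs proves both assertions of the lemma: weak $A$-gates in $\fp$ exist, and every valuation ring in $K$ dominating $R_\fp$ is one.
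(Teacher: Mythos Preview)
Your argument is correct and follows exactly the approach sketched in the paper, which simply cites the existence of dominating valuation rings and the freeness of finitely generated torsion-free modules over valuation rings; you have merely unpacked the second citation by exhibiting the lattice $\wt V$ explicitly. One small notational slip: the canonical map you want is $K \otimes_\sO \wt V \to V$ (equivalently $A^K \otimes_{A^\sO} \wt V \to V$), not $A^K \otimes_\sO \wt V$.
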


The last condition in Definition \ref{gate_definition} holds for example independently of $\sO$ if $A(\fp)$ splits. Recall that a finite-dimensional algebra $A$ over a field $K$ splits if and only if one (all) of the following equivalent conditions holds (see \cite{Lam-Noncommutative}):
\begin{enum_thm}
\item All simple $A$-modules remain simple under arbitrary field extensions of $K$.
\item $A/\Jac(A)$ is a finite direct product of matrix algebras over $K$, where $\Jac(A)$ is the Jacobson radical of $A$.
\item $\End_{A}(S) = K$ for any simple $A$-module $S$.
\item The natural map $A \rarr \End_{K}(S)$ is surjective for any simple $A$-module $S$.
\end{enum_thm}
Of course, if $\rk(\fp)$ is algebraically closed, then $A(\fp)$ splits. We record our observations again.

\begin{lemma} \label{split_gates_exist}
If $A(\fp)$ splits, there is an $A$-gate in $\fp$. In fact, any weak $A$-gate in $\fp$ (for example a valuation ring in $K$ dominating $R_\fp$) is already an $A$-gate in $\fp$.
\end{lemma}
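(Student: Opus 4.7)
The plan is to verify condition \ref{gate_definition_G} of Definition \ref{gate_definition} for any weak $A$-gate $\sO$ in $\fp$, since the existence of such a weak gate is already guaranteed by the preceding lemma; this will yield both assertions simultaneously.

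First I would unpack the geometry: condition (i) of a weak $A$-gate gives $R \cap \fm = \fp$, so $R/\fp \hookrightarrow \sO/\fm$, and because $\sO/\fm$ is a field this embedding extends uniquely to an embedding $\rk(\fp) \hookrightarrow \sO/\fm$. Writing $L \dopgleich \sO/\fm$, this produces a canonical identification
\[
A^\sO(\fm) = L \otimes_\sO A^\sO = L \otimes_{\rk(\fp)} A(\fp) = A(\fp)^L,
\]
so the injection of Lemma \ref{d_field_ext_injective} realises $\rG_0(A(\fp))$ as a subgroup of $\rG_0(A^\sO(\fm))$, exactly the subgroup appearing in condition \ref{gate_definition_G}.

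The key step is then to show that under the splitting hypothesis on $A(\fp)$ this subgroup is in fact the whole group. Using the splitting characterisation via the Jacobson radical, $A(\fp)/\Jac(A(\fp)) \cong \prod_i M_{n_i}(\rk(\fp))$; tensoring with $L$ yields
\[
A(\fp)^L/\Jac(A(\fp))^L \cong \prod_i M_{n_i}(L),
\]
which is semisimple, so $\Jac(A(\fp)^L) \subseteq \Jac(A(\fp))^L$, while the nilpotence of $\Jac(A(\fp))$ forces the reverse inclusion. Thus the simple $A(\fp)^L$-modules are exactly the scalar extensions $S_i^L$ of a system $(S_i)$ of simple $A(\fp)$-modules, each still simple by splitness. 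The natural map therefore sends a basis of $\rG_0(A(\fp))$ bijectively to a basis of $\rG_0(A^\sO(\fm))$, and the two Grothendieck groups coincide.

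With $\rG_0(A(\fp)) = \rG_0(A^\sO(\fm))$, condition \ref{gate_definition_G} becomes vacuous: every class in $\rG_0(A^\sO(\fm))$, in particular every reduction $[\wt V/\fm \wt V]$ of a finitely generated $\sO$-free $A^\sO$-module, automatically lies in $\rG_0(A(\fp))$. Hence $\sO$ is an $A$-gate in $\fp$, which proves the second assertion. Combining this with the preceding lemma (which provides weak $A$-gates, e.g.\ any valuation ring in $K$ dominating $R_\fp$) gives the first assertion. The only non-routine ingredient is the preservation of the Jacobson radical under base change for split algebras, but as sketched this is a short consequence of semisimplicity and nilpotence.
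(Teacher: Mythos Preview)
Your proof is correct and follows the same idea as the paper: the paper simply notes (in the paragraph preceding the lemma) that condition \ref{gate_definition_G} holds automatically whenever $A(\fp)$ splits, invoking the characterisation that simple modules of a split algebra remain simple under any field extension, so that $\rG_0(A(\fp)) \hookrightarrow \rG_0(A^\sO(\fm))$ is in fact an isomorphism. You reach the same conclusion, but via the Jacobson-radical characterisation of splitting rather than directly via characterisation (a); both routes are standard and essentially equivalent.
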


Although splitting of $A(\fp)$ is our usual assumption for ensuring the existence of an $A$-gate later, note that if $R$ is a Prüfer domain—this is a Dedekind domain if and only if it is noetherian—then the localization $R_\fp$ is a valuation ring and thus indeed an $A$-gate in $\fp$ without having to assume that $A(\fp)$ splits. As our aim is to investigate the case of base rings of arbitrary dimension, we will essentially not come across this case.

\subsection{The Brauer–Nesbitt map}

Before we come to decomposition maps we recall an important ingredient in proving their existence and uniqueness—the Brauer–Nesbitt map. The reason we address this here is that we can give a proof of its unconditional injectivity, generalizing \cite[Proposition 2.5]{GR-Centers-Simple-Hecke} and \cite[Lemma 7.3.2]{GP-Coxeter-Hecke}. In this way we can drop this assumption and make the theory a bit slicker. Moreover, it plays an important role in understanding decomposition maps realized by different $A$-gates. This is needed for the proof of Theorem \ref{all_decs_are_discrete}. \\

First, we recall the definition of the Brauer–Nesbitt map. Throughout, we denote by $\rG_0^+$ the subsemigroup of the Grothendieck group $\rG_0$ formed by the classes of honest (i.e., non-virtual) modules. Let $\fp$ be a prime ideal of $R$ and let $L_{\fp }$ be an extension field of $\rk(\fp )$. Then the map \label{symb_bnA}
\[
\begin{array}{rcl}
 \mrm{bn}_{A}^{\fp ,L_{\fp }}:\mrm{G}_0^+(A(\fp )^{L_{\fp }}) & \longrightarrow & \Hom_{\cat{Set}}(A, L_{\fp } \lbrack X \rbrack) \\
 \lbrack V \rbrack & \longmapsto & (a \mapsto \chi_{\rho_V(\ol{a})}) \;,
\end{array}
\]
where $\chi_{\rho_V(\ol{a})} \in K \lbrack X \rbrack$ denotes the characteristic polynomial of the $L_{\fp }$-endomorphism $\rho_V(\ol{a})$ of $V$ defined by the action of the image $\ol{a}$ of $a\in A$ in $A/\fp A \subs A(\fp ) \subs A(\fp )^{L_{\fp }}$ on $V$, is a well-defined semigroup morphism. 
The map $\mrm{bn}_A^{\fp ,L_{\fp }}$ is called the \word{Brauer--Nesbitt map} of $A$ in $(\fp ,L_{\fp })$. We simply write $\mrm{bn}_A^{\fp }$ for $\mrm{bn}_A^{\fp ,\rk(\fp )}$. 

\begin{definition} \label{symb_tMtheta}
If $M$ is a set and $\theta:R \rarr S$ is a morphism of commutative rings, we denote by $\rt_M^\theta$ the canonical morphism $\Hom_{\cat{Set}}(M, R\lbrack X \rbrack) \rarr \Hom_{\cat{Set}}(M,S \lbrack X \rbrack)$ induced by $\theta$.
\end{definition}

Note that if $\theta$ is injective, then so is $\rt_M^\theta$. The following lemma is slightly more general than \cite[7.3.4]{GP-Coxeter-Hecke} but follows in the same way by writing the action of algebra elements explicitly as matrices.

\begin{lemma} \label{brauer_nesbitt_field_ext}
Let $L_{\fp }$ be an extension field of $\rk(\fp )$ and let $\theta:L_{\fp } \hookrightarrow L_{\fp }'$ be a field extension. Then the diagram
\[
\begin{tikzcd}
\rG_0^+(A(\fp )^{L_{\fp }}) \arrow{rr}{\mrm{bn}_A^{\fp ,L_{\fp }}} \arrow{d}[swap]{\rd_{A(\fp )^{L_{\fp }}}^\theta} && \Hom_{\cat{Set}}(A, L_{\fp } \lbrack X \rbrack) \arrow{d}{\rt_A^{\theta}} \\
\rG_0^+(A(\fp )^{L_{\fp }'}) \arrow{rr}[swap]{\mrm{bn}_A^{\fp ,L_{\fp }'}} && \Hom_{\cat{Set}}(A, L_{\fp }' \lbrack X \rbrack) 
\end{tikzcd}
\]
commutes.
\end{lemma}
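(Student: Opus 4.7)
The plan is to verify the commutativity of the diagram by chasing a generator $[V] \in \rG_0^+(A(\fp)^{L_\fp})$ around both paths and observing that the two outcomes agree as set-theoretic maps $A \to L_\fp'[X]$. Since both $\mrm{bn}_A^{\fp,L_\fp}$ and $\mrm{bn}_A^{\fp,L_\fp'}$ are semigroup morphisms (with pointwise multiplication of characteristic polynomials on the target side) and $\rd_{A(\fp)^{L_\fp}}^\theta$ and $\rt_A^\theta$ are additive/multiplicative in the appropriate senses, it suffices to check the equality on classes of honest modules.

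First I would fix an $L_\fp$-basis $(v_1,\ldots,v_n)$ of $V$, which determines, for every $a\in A$, a matrix $M_a \in \rM_n(L_\fp)$ representing $\rho_V(\ol a)$, so that $\mrm{bn}_A^{\fp,L_\fp}([V])(a) = \det(X\cdot I - M_a) \in L_\fp[X]$. Applying $\rt_A^\theta$ then produces the function $a \mapsto \theta_\star\bigl(\det(X\cdot I - M_a)\bigr)$, where $\theta_\star$ denotes the coefficientwise extension of $\theta$ to polynomial rings.

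For the other path, $\rd_{A(\fp)^{L_\fp}}^\theta$ sends $[V]$ to $[V \otimes_{L_\fp} L_\fp']$ in $\rG_0^+(A(\fp)^{L_\fp'})$. The tensored family $(v_1 \otimes 1,\ldots,v_n \otimes 1)$ is an $L_\fp'$-basis of $V \otimes_{L_\fp} L_\fp'$, and the matrix representing the action of $\ol a$ in this basis is precisely $\theta(M_a)$, the matrix obtained by applying $\theta$ entrywise to $M_a$. Therefore $\mrm{bn}_A^{\fp,L_\fp'}\bigl(\rd_{A(\fp)^{L_\fp}}^\theta([V])\bigr)(a) = \det(X\cdot I - \theta(M_a))$.

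The two outputs coincide because $\theta$ is a ring homomorphism and the determinant is a polynomial expression in the matrix entries, so $\det(X\cdot I - \theta(M_a)) = \theta_\star\bigl(\det(X\cdot I - M_a)\bigr)$. This is really the only content of the lemma: once the Brauer--Nesbitt map is written matrix-theoretically, compatibility with scalar extension is the naturality of the characteristic polynomial. There is no genuine obstacle here; the only point to remain mindful of is that the Grothendieck-group element $[V\otimes_{L_\fp} L_\fp']$ may decompose into several simple constituents over $L_\fp'$, but since $\mrm{bn}_A^{\fp,L_\fp'}$ is a semigroup morphism and the characteristic polynomial is computed from the module itself (independently of any Jordan--Hölder filtration), this decomposition is invisible to the calculation.
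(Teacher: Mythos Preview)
Your argument is correct and is precisely the approach the paper indicates: the paper does not spell out a proof but simply remarks that the lemma ``follows in the same way by writing the action of algebra elements explicitly as matrices,'' which is exactly the basis-and-matrix computation you carry out. Your additional remark about the possible decomposition of $V\otimes_{L_\fp}L_\fp'$ being invisible to the semigroup morphism is a nice clarification beyond what the paper states.
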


\begin{proposition} \label{brauer_nesbitt}
The Brauer--Nesbitt map $\mrm{bn}_{A}^{\fp ,L_{\fp }}$ is injective for any prime ideal $\fp $ of $R$ and any extension field $L_{\fp }$ of $\rk(\fp )$.
\end{proposition}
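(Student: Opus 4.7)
The plan is to reduce the injectivity of $\mrm{bn}_A^{\fp, L_\fp}$ to the case where the coefficient field splits the algebra $A(\fp)$, and then invoke the classical Brauer--Nesbitt theorem for split algebras.

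First, I would choose an algebraic closure $\theta : L_\fp \hookrightarrow L_\fp'$ and apply Lemma \ref{brauer_nesbitt_field_ext}. The resulting commutative square yields the identity $\rt_A^\theta \circ \mrm{bn}_A^{\fp, L_\fp} = \mrm{bn}_A^{\fp, L_\fp'} \circ \rd_{A(\fp)^{L_\fp}}^\theta$. By Lemma \ref{d_field_ext_injective} the morphism $\rd_{A(\fp)^{L_\fp}}^\theta$ is injective on $\rG_0$, hence a fortiori on the positive cone $\rG_0^+$, and $\rt_A^\theta$ is injective since $\theta$ is (as noted right after Definition \ref{symb_tMtheta}). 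Once one knows that $\mrm{bn}_A^{\fp, L_\fp'}$ is injective, the right-hand side of the displayed identity is a composition of injections, forcing the left-hand side---and hence $\mrm{bn}_A^{\fp, L_\fp}$ itself---to be injective.

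It therefore suffices to prove the proposition when $L_\fp$ is algebraically closed, so that $B \dopgleich A(\fp)^{L_\fp}$ is split. Given a finite-dimensional $B$-module $V$, a composition series expresses each action matrix $\rho_V(a)$ in a suitably adapted basis as block upper triangular, with diagonal blocks realizing the action of $a$ on the composition factors. Consequently $\chi_{\rho_V(a)} = \prod_{S} \chi_{\rho_S(a)}^{[V:S]}$, where the product ranges over the finitely many isomorphism classes of simple $B$-modules. In particular $\mrm{bn}_A^{\fp, L_\fp}([V])$ depends only on the class $[V] \in \rG_0^+(B)$ and coincides with the Brauer--Nesbitt image of the semisimplification $V^{ss}$.

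The remaining task---showing that two semisimple $B$-modules with identical characteristic-polynomial functions are isomorphic---is the classical Brauer--Nesbitt theorem. Using the split Artin--Wedderburn decomposition $B/\Jac(B) \cong \prod_i M_{n_i}(L_\fp)$, one chooses algebra elements whose projections to the individual matrix factors have pairwise distinct eigenvalue patterns, and then reads off the multiplicities $[V:S_i]$ from the factorization of $\chi_{\rho_V(a)}$. This is the genuine obstacle: in positive characteristic traces alone do not determine composition multiplicities, and one really needs the full characteristic polynomial---the point being that unique factorization into irreducible polynomials over $L_\fp[X]$ recovers the exponents $[V:S_i]$, whereas the sum of the constant terms would collapse modulo the characteristic. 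With this classical fact in hand the reduction via Lemma \ref{brauer_nesbitt_field_ext} finishes the proof.
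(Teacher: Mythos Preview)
Your reduction to the split case via Lemma \ref{brauer_nesbitt_field_ext} and Lemma \ref{d_field_ext_injective} is correct and matches the paper's approach exactly.

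The gap is in your treatment of the split case. The Brauer--Nesbitt map $\mrm{bn}_A^{\fp, L_\fp'}$ lands in $\Hom_{\cat{Set}}(A, L_\fp'[X])$, so it records only the characteristic polynomials $\chi_{\rho_V(\bar a)}$ for $a \in A$---not for arbitrary $b \in B = A(\fp)^{L_\fp'}$. Your argument ``chooses algebra elements whose projections to the individual matrix factors have pairwise distinct eigenvalue patterns,'' but this naturally produces elements of $B$, and you do not explain why such a separating element exists in the image of $A$ (equivalently, in $A/\fp A$). That image is merely an $(R/\fp)$-subring spanning $B$ over $L_\fp'$; when $R/\fp$ is finite it is not at all obvious that it meets the Zariski-open locus of separating elements, and the classical Brauer--Nesbitt theorem you invoke is stated for the characteristic-polynomial function on all of $B$, not on a spanning subring.

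This is precisely the point the paper handles by a different route: the Frobenius--Schur theorem gives linear independence of the irreducible $B$-characters, and then \cite[7.3.2]{GP-Coxeter-Hecke} is invoked---a lemma whose content is exactly that linear independence of characters forces injectivity of the Brauer--Nesbitt map \emph{as a map to functions on the $R$-form $A/\fp A$}, not merely on $B$. Since characters are $L_\fp'$-linear functionals, their linear independence on $B$ immediately restricts to the spanning set $A/\fp A$, which is what makes that argument go through without needing to locate a single separating element. If you want to salvage your direct approach, you must either show that a separating element always lies in $A/\fp A$ (delicate over small residue fields), or argue that the characteristic-polynomial function on $A/\fp A$ determines it on all of $B$.
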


\begin{proof} 
Let $L_{\fp }'$ be a splitting field of $A(\fp ) = (A/\fp A)^{\rk(\fp )}$ containing $L_{\fp }$. If $\theta:L_{\fp } \hookrightarrow L_{\fp }'$ denotes the embedding, then it follows from Lemma \ref{brauer_nesbitt_field_ext} applied to the $(R/\fp )$-algebra $A/\fp A$, the prime ideal $(0) \in \Spec(R/\fp )$ and the field extension $\theta$ that the diagram
\[
\begin{tikzcd}
\rG_0^+(A(\fp )^{L_{\fp }}) \arrow{rr}{\mrm{bn}_{A/\fp A}^{(0),L_{\fp }}} \arrow{d}[swap]{\rd_{A(\fp )^{L_{\fp }}}^\theta} && \Hom_{\cat{Set}}(A/\fp A, L_{\fp } \lbrack X \rbrack) \arrow{d}{\rt_{A/\fp A}^{\theta}} \\
\rG_0^+(A(\fp )^{L_{\fp }'}) \arrow{rr}[swap]{\mrm{bn}_{A/\fp A}^{(0),L_{\fp }'}} && \Hom_{\cat{Set}}(A/\fp A, L_{\fp }' \lbrack X \rbrack) 
\end{tikzcd}
\]
commutes. An application of the Frobenius--Schur theorem \cite[3.41]{CR-Methods-1} implies that the family of irreducible $A(\fp )^{L_{\fp }'}$-characters is linearly independent in the $L_{\fp }'$-module of class functions on $A(\fp )^{L_{\fp }'}$. Hence, the lemma \cite[7.3.2]{GP-Coxeter-Hecke} applied to the $(R/\fp )$-algebra $A/\fp A$ and the field $L_{\fp }'$ shows that $\mrm{bn}_{A/\fp A}^{(0),L_{\fp }'}$ is injective. Since $\rd_{A(\fp )^{L_{\fp }}}^\theta$ is injective by Lemma \ref{d_field_ext_injective} and since $\rt_{A/\fp A}^\theta$ is obviously injective, the commutativity of the above diagram thus implies that $\mrm{bn}_{A/\fp A}^{(0),L_{\fp }}$ is injective. 
The map $\mrm{bn}_A^{\fp ,L_{\fp }}$ we are interested in is equal to $(q_A^{\fp })^* \circ \mrm{bn}_{A/\fp A}^{(0),L_{\fp }}$, where 
\[
(q_A^{\fp })^*: \Hom_{\cat{Set}}(A/\fp A, L_{\fp })\lbrack X \rbrack) \rarr \Hom_{\cat{Set}}(A, L_{\fp }\lbrack X \rbrack)
\]
is the map induced by composing maps with the quotient morphism $q_A^{\fp }:A \rarr A/\fp A$. 
Now, if $\mrm{bn}_A^{\fp ,L_{\fp }}(\lbrack V \rbrack) = \mrm{bn}_A^{\fp ,L_{\fp }}(\lbrack W \rbrack)$ for some $A(\fp )$-modules $V$ and $W$, then by definition
\begin{align*}
\mrm{bn}_{A/\fp A}^{(0),L_{\fp }}(\lbrack V \rbrack) \circ q_A^{\fp } &= ((q_A^{\fp })^* \circ \mrm{bn}_{A/\fp A}^{(0),L_{\fp }})( \lbrack V \rbrack) = \mrm{bn}_A^{\fp ,L_{\fp }}(\lbrack V \rbrack) \\ &= \mrm{bn}_A^{\fp ,L_{\fp }}(\lbrack W \rbrack) = ((q_A^{\fp })^* \circ \mrm{bn}_{A/\fp A}^{(0),L_{\fp }})( \lbrack W \rbrack) = \mrm{bn}_{A/\fp A}^{(0),L_{\fp }}(\lbrack W \rbrack) \circ q_A^{\fp } \; .
\end{align*}
Using the fact that $q_A^{\fp }$ is surjective and thus right cancelable, we conclude that $\mrm{bn}_{A/\fp A}^{(0),L_{\fp }}(\lbrack V \rbrack)  = \mrm{bn}_{A/\fp A}^{(0),L_{\fp }}(\lbrack W \rbrack)$ and the above now implies that $\lbrack V \rbrack = \lbrack W \rbrack$, i.e., $\mrm{bn}_A^{\fp ,L_{\fp }}$ is injective.
\end{proof}

\subsection{Decomposition maps}

The injectivity of the Brauer–Nesbitt map is the central ingredient in proving that decomposition maps are well defined. This fact is due to Geck and Rouquier \cite{GR-Centers-Simple-Hecke}. In this section we essentially extract and generalize their arguments to show that decomposition maps exists for any choice of $A$-gate (see Corollary \ref{dec_morphisms_exist}). The additional details we give here enable us to study their dependence on the choice of the $A$-gate in the next section.

\begin{definition}
The \word{Brauer–Nesbitt attractor} $\Omega_A$ of $A$ is the intersection of all rings $\Omega$ between $R$ and $K$ with the property that the image of $\mrm{bn}_A^{(0)}$ is contained in the subset $\Hom_{\cat{Set}}(A, \Omega \lbrack X \rbrack)$ of $\Hom_{\cat{Set}}(A, K \lbrack X \rbrack)$. This ring is the unique minimal one with this property. 
\end{definition}

\begin{lemma} \label{bn_attractor_finite_type}
$\Omega_A$ is integral over $R$ and an $R$-algebra of finite type.
\end{lemma}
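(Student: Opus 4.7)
The first step is to unwind the definition of $\Omega_A$ into an explicit description. Since $A^K$ is finite-dimensional over $K$, it has only finitely many isomorphism classes of simple modules, say $V_1, \ldots, V_n$. Every class in $\rG_0^+(A^K)$ decomposes into simples, and from the multiplicativity $\chi_{\rho_{V \oplus W}(a)} = \chi_{\rho_V(a)} \cdot \chi_{\rho_W(a)}$ one sees that a subring $\Omega$ between $R$ and $K$ satisfies the defining property iff the coefficients $c_k^{(V_i)}(a) \in K$ of each $\chi_{\rho_{V_i}(a)}$ all lie in $\Omega$, for $i = 1, \ldots, n$, $k = 1, \ldots, \dim_K V_i$, and $a \in A$. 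Hence $\Omega_A$ is simply the $R$-subalgebra of $K$ generated by this parametrized set of characteristic polynomial coefficients.

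For integrality I would invoke the Cayley--Hamilton theorem for the left regular representation of $A$ on itself: since $A$ is finite free over $R$ of rank $r$, every $a \in A$ satisfies a monic polynomial in $R[X]$ of degree $r$ (its characteristic polynomial as an $R$-endomorphism of $A$). The same polynomial is then satisfied by $\rho_{V_i}(a) \in \End_K(V_i)$, so the eigenvalues of $\rho_{V_i}(a)$ in $\ol K$ are integral over $R$, and consequently so are their elementary symmetric functions---which are, up to sign, the $c_k^{(V_i)}(a)$. This already shows $\Omega_A \subseteq \ol R$, the integral closure of $R$ in $K$.

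For finite type I plan to lift the preceding argument to a universal setup. Fix an $R$-basis $e_1, \ldots, e_r$ of $A$, set $S := R[X_1, \ldots, X_r]$, and take the universal element $\wt a := \sum_j X_j e_j \in A^S$. The $S$-algebra $A^S$ is again finite free of rank $r$, so the same Cayley--Hamilton argument shows $c_k^{(V_i)}(\wt a) \in K[X_1, \ldots, X_r]$ is integral over $S$. Invoking the classical fact that the integral closure of $R[X_1, \ldots, X_r]$ in $K[X_1, \ldots, X_r]$ equals $\ol R[X_1, \ldots, X_r]$, the coefficients $C_{k,\alpha}^{(V_i)} \in K$ in the expansion $c_k^{(V_i)}(\wt a) = \sum_\alpha C_{k,\alpha}^{(V_i)} \underline X^\alpha$ all lie in $\ol R$. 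Specializing $X_j \mapsto r_j \in R$ then gives $c_k^{(V_i)}(\sum_j r_j e_j) = \sum_\alpha C_{k,\alpha}^{(V_i)} \underline r^\alpha$, so every generator of $\Omega_A$ is an $R$-linear combination of the finite set of integral elements $\{C_{k,\alpha}^{(V_i)}\}$; in particular $\Omega_A$ sits inside the finite $R$-algebra $R[\{C_{k,\alpha}^{(V_i)}\}]$.

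The main obstacle will be the last step, passing from ``contained in a finite $R$-algebra'' to ``finite type as an $R$-algebra.'' This is immediate if $R$ is noetherian (as it will be in every subsequent application in the paper), since then $R[\{C_{k,\alpha}^{(V_i)}\}]$ is a noetherian $R$-module and $\Omega_A$ inherits finite generation as a submodule. In the general case my fallback would be to show that each $C_{k,\alpha}^{(V_i)}$ itself already lies in $\Omega_A$, via a polarization/inclusion--exclusion argument expressing $k! \cdot C_{k,\alpha}^{(V_i)}$ as an explicit $R$-linear combination of values $c_k^{(V_i)}(\sum_{j \in T} e_j)$ for $T \subseteq \{1, \ldots, r\}$; the spurious factor $k!$ can then be absorbed using the integrality of the $C$'s established in the preceding step.
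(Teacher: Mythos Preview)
Your integrality argument via Cayley--Hamilton on the regular representation is correct and is essentially the content of the reference the paper cites for this point.

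For finite type your route is genuinely different. The paper argues directly that $\Omega_A = R[C]$, where $C$ is the finite set of coefficients of the polynomials $\chi_{\rho_{S_j}(a_i)}$ for a fixed $R$-basis $(a_i)$ of $A$ and a system $(S_j)$ of simple $A^K$-modules; the inclusion $\Omega_A \subs R[C]$ is obtained from a result of Amitsur expressing the characteristic polynomial of a sum of matrices in terms of those of the summands and their products. This produces an explicit finite generating set and works over an arbitrary base ring $R$. Your universal-element argument, by contrast, only exhibits a finite integral $R$-algebra $R[\{C_{k,\alpha}^{(V_i)}\}]$ \emph{containing} $\Omega_A$; this suffices when $R$ is noetherian but not in general. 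What you gain is a self-contained proof (Cayley--Hamilton plus the description of the integral closure of a polynomial ring) that avoids the Amitsur machinery.

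Your non-noetherian fallback, however, has a real gap. Polarization does yield $\alpha!\,C_{k,\alpha}^{(V_i)}$ as a $\bbZ$-linear combination of values $c_k^{(V_i)}(\sum_{j \in T} e_j) \in \Omega_A$, but the proposed step of ``absorbing'' the factorial via integrality does not go through: from $mC \in \Omega_A$ and $C \in \ol{R}$ one cannot conclude $C \in \Omega_A$ (for instance if $R$ has positive characteristic dividing $m$, the hypothesis $mC \in \Omega_A$ is vacuous). So as written you have established the lemma only under the noetherian hypothesis---which, as you correctly observe, is the only case the paper actually uses.
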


\begin{proof}
Let $(a_i)_{i=1}^n$ be an $R$-basis of $A$ and let $(S_j)_{j=1}^m$ be a system of simple $A^K$-modules. Let $C$ be the set of coefficients of the characteristic polynomials $\chi_{\rho_{S_j}}(a_i)$, where $1 \leq i \leq n$ and $1 \leq j \leq m$. This set is clearly finite. We claim that $\Omega_A$ is the $R$-subalgebra of $K$ generated by $C$. Clearly, $R \lbrack C \rbrack \subs \Omega_A$. To prove equality it is (due to the additivity of the Brauer–Nesbitt map) enough to show that the coefficients of $\mrm{bn}_A^{(0)}(\lbrack S_j \rbrack)(a) = \chi_{\rho_{S_j}}(a)$ are contained in $R \lbrack C \rbrack$ for all $a \in A$. It is an elementary fact that the coefficients of the characteristic polynomial of a sum of two matrices are polynomials in the coefficients of the characteristic polynomials of the two matrices (see also \cite{Amitsur:1979aa}). Hence, if $a$ is an $R$-linear combination of the $a_i$, then the coefficients of $\chi_{\rho_{S_j}}(a)$ are contained in $R \lbrack C \rbrack$. As $(a_i)_{i=1}^n$ is an $R$-basis of $A$, the claim follows. That $\Omega_A$ is integral over $R$ follows from a standard fact about integrality of the coefficients of characteristic polynomials (see \cite[Theorem 7.3.8]{GP-Coxeter-Hecke}).
\end{proof}

This implies in particular that the extension $R \subs \Omega_A$ is integral and so we have a surjective morphism between their spectra. 
As the normalization of an integral domain $R$ in its field of fractions $K$ is the intersection of all valuation rings between $R$ and $K$, we immediately obtain the following lemma.

\begin{lemma} \label{gates_containing_omega}
For any weak $A$-gate $\sO$ in $\fp$ there is a weak $A$-gate $\sO'$ in $\fp$ dominating $\sO$ and containing $\Omega_A$. In fact, any valuation ring in $K$ dominating $\sO$ satisfies this.
\end{lemma}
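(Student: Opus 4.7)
The plan is to produce the desired $\sO'$ as a valuation ring of $K$ dominating $\sO$, and then to verify the two required properties by assembling results already established in the excerpt. In fact, the stronger ``in fact'' clause suggests that the whole statement should drop out of previous lemmas without any genuine new work, so the task is mostly one of organization.

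First, I would invoke the standard commutative algebra fact (recalled in the discussion after Definition \ref{gate_definition}, with reference to Bourbaki VI.1.2) that every local subring of a field is dominated by some valuation ring of that field. Applied to the local ring $\sO \subset K$, this yields a valuation ring $\sO' \subset K$ dominating $\sO$. I would then quote the lemma immediately following Definition \ref{gate_definition}, which asserts that any local ring in $K$ dominating a weak $A$-gate in $\fp$ is again a weak $A$-gate in $\fp$; hence $\sO'$ is a weak $A$-gate in $\fp$.

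Next I would show $\Omega_A \subseteq \sO'$. The key input is Lemma \ref{bn_attractor_finite_type}, which says that $\Omega_A$ is integral over $R$. Since a weak $A$-gate lies between $R$ and $K$, we have $R \subseteq \sO \subseteq \sO'$, so $\sO'$ is a valuation ring of $K$ containing $R$. Now the classical characterization of the integral closure of $R$ in $K$—namely, that it equals the intersection of all valuation rings of $K$ containing $R$—implies that every element of $K$ integral over $R$ lies in each such valuation ring. In particular $\Omega_A \subseteq \sO'$.

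Since at no point was a specific choice of dominating valuation ring used, the argument in fact establishes that \emph{any} valuation ring of $K$ dominating $\sO$ works, yielding the ``in fact'' strengthening. The only thing worth double-checking is that the lemma on dominance and weak $A$-gates indeed applies when $\sO'$ is a valuation ring (and not just any local ring), but this is clear because valuation rings of $K$ are local subrings of $K$, so there is no obstacle here; the proof is essentially a repackaging of Lemma \ref{bn_attractor_finite_type} and the two facts about valuation rings already cited.
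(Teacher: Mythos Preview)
Your proposal is correct and follows essentially the same approach as the paper: the paper's one-line proof simply invokes the fact that the normalization of $R$ in $K$ is the intersection of all valuation rings between $R$ and $K$, which together with the integrality of $\Omega_A$ over $R$ (Lemma \ref{bn_attractor_finite_type}) and the earlier lemma on domination of weak $A$-gates gives the result. You have spelled out the same argument in more detail, but there is no substantive difference.
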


The following proposition can essentially be extracted from the proof of \cite[Theorem 7.4.3]{GP-Coxeter-Hecke}. We state it in a more general form here and give the full proof for convenience.

\begin{proposition} \label{bn_reduction_theorem}
Let $(\sO,\fm)$ be a weak $A$-gate in $\fp$ containing $\Omega_A$. Then the relation
\[
 \mrm{bn}_A^{\fp ,\rk(\fm )} ( \lbrack \wt{V}/\fm \wt{V} \rbrack) = \rt_A^{\theta^{\fm }} \circ \mrm{bn}_A^{(0)}( \lbrack \wt{V}^K \rbrack) 
\]
holds for any finitely generated $\sO$-free $A^\sO$-module $\wt{V}$. Here, $\theta^\fm:\sO \rarr \rk(\fm)$ is the canonical morphism.
\end{proposition}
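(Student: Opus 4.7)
The plan is to compute both sides directly after choosing an $\sO$-basis of $\wt V$. First, I would observe that the hypothesis $\sO \supseteq \Omega_A$ is exactly what is needed to make the composition $\rt_A^{\theta^\fm} \circ \mrm{bn}_A^{(0)}$ syntactically well-defined: by definition of $\Omega_A$, the image of $\mrm{bn}_A^{(0)}$ is contained in $\Hom_{\cat{Set}}(A, \Omega_A[X]) \subseteq \Hom_{\cat{Set}}(A, \sO[X])$, which is the source of $\rt_A^{\theta^\fm}$.

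Next, fix an $\sO$-basis $(v_1, \dots, v_n)$ of $\wt V$. For each $a \in A$, let $M_a \in \mrm{Mat}_n(\sO)$ be the matrix of the $\sO$-linear endomorphism of $\wt V$ given by the action of $1 \otimes a$. Since $\wt V$ is $\sO$-free, the images $(1 \otimes v_i)$ form a $K$-basis of $\wt V^K$, and the action of $a$ on $\wt V^K$ is represented by the same matrix $M_a$, now viewed in $\mrm{Mat}_n(K)$. Hence
\[
\mrm{bn}_A^{(0)}([\wt V^K])(a) = \det(X \cdot I_n - M_a) \in \sO[X],
\]
and applying $\rt_A^{\theta^\fm}$ amounts to reducing the coefficients of this polynomial coefficient-wise via $\theta^\fm$.

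On the other side, the images $\overline{v_i}$ form a $\rk(\fm)$-basis of $\wt V/\fm \wt V$. Using the identification $A(\fp)^{\rk(\fm)} \cong A^{\rk(\fm)}$ (coming from the inclusion $\rk(\fp) \hookrightarrow \rk(\fm)$ provided by $R \cap \fm = \fp$), the element $\overline a \in A(\fp)$ acts in this basis via the coefficient-wise reduction $\theta^\fm(M_a) \in \mrm{Mat}_n(\rk(\fm))$. Therefore
\[
\mrm{bn}_A^{\fp, \rk(\fm)}([\wt V/\fm \wt V])(a) = \det(X \cdot I_n - \theta^\fm(M_a)) \in \rk(\fm)[X].
\]

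Finally, the two sides coincide because the determinant (and hence the characteristic polynomial) commutes with any ring homomorphism applied entry-wise: $\theta^\fm(\det(X \cdot I_n - M_a)) = \det(X \cdot I_n - \theta^\fm(M_a))$. This holds for every $a \in A$, giving the claimed equality of maps. I expect no real obstacle in this argument; the only thing requiring care is the bookkeeping around the scalar extensions $A \to A^\sO \to A^K$ and $A \to A(\fp) \to A(\fp)^{\rk(\fm)} \cong A^{\rk(\fm)}$, together with checking that $\wt V^K$ and $\wt V/\fm \wt V$ share the matrix representation inherited from the chosen basis of $\wt V$.
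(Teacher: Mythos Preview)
Your proposal is correct and follows essentially the same approach as the paper: choose an $\sO$-basis of $\wt V$, observe that the matrix of the action of $a$ lies in $\mrm{Mat}_n(\sO)$ and serves simultaneously for $\wt V^K$ and (after coefficient-wise reduction) for $\wt V/\fm\wt V$, and then use that taking the characteristic polynomial commutes with the ring map $\theta^\fm$. The paper's proof is the same computation with the same bookkeeping.
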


\begin{proof}
First note that due to the assumption $\Omega_A \subs \sO$, the right hand side of the equation is well-defined. Let $\sB$ be an $\sO$-basis of $\wt{V}$ and let $a \in A$. Let $(m_{ij}) \in \mrm{Mat}_{n}(\sO)$ with $n \dopgleich \dim_{\sO}(V)$ be the matrix describing the action of $a \otimes 1 \in A^{\sO}$ on $\wt{V}$ with respect to $\sB$. Then $(\theta^{\fm }(m_{ij})) \in \mrm{Mat}_n(\rk(\fm ))$ is the matrix describing the action of $a \otimes 1 \in A^{\rk(\fm )}$ on $\wt{V}^{\rk(\fm )} = \wt{V}/\fm \wt{V}$ with respect to $\sB^{\rk(\fm )}$ and it follows that $\mrm{bn}_A^{\fp ,\rk(\fm )}(\lbrack \wt{V}/\fm \wt{V} \rbrack)(a \otimes 1)$ is the characteristic polynomial of this matrix. 
On the other hand, since $(m_{ij})$ is the matrix describing the action of $a \otimes 1 \in A^K$ on $\wt{V}^K$ with respect to $\sB^K$, it follows that $\rt_A^{\theta^{\fm }} \circ \mrm{bn}_A^{(0)}( \lbrack \wt{V}^K \rbrack)(a \otimes 1)$ is computed by first computing the characteristic polynomial of $(m_{ij})$, which has coefficients in $\sO$ since $\Omega_A \subs \sO$, and then reducing it modulo $\fm $, while $\mrm{bn}_A^{\fp ,\rk(\fm )}(\lbrack \wt{V}^{\rk(\fm )} \rbrack)(a \otimes 1)$ is computed by first reducing $(m_{ij})$ modulo $\fm $ and then computing the characteristic polynomial. As the operations of reducing and computing the characteristic polynomial commute and since all maps are $\rk(\fm )$-linear, the equality follows.
\end{proof}

To be precise in the following we denote for a local ring $(\sO,\fm)$ dominating $R_\fp$ by $\gamma_A^{\fp,\fm}$ the morphism $\rG_0(A(\fp)) \hookrightarrow \rG_0(A^\sO(\fm))$ induced by $\rk(\fp) \hookrightarrow \rk(\fm)$. It is injective by Lemma \ref{d_field_ext_injective} and thus an isomorphism onto its image.

\begin{proposition} \label{reduction_well_defined}
Let $(\sO,\fm)$ be a weak $A$-gate in $\fp$. Let $\wt{V}$ and $\wt{W}$ be two $\sO$-free $A^\sO$-forms of a finite-dimensional $A^K$-module $V$. Then
\[
\lbrack \wt{V}/\fm\wt{V} \rbrack = \lbrack \wt{W}/\fm\wt{W} \rbrack 
\]
in $\rG_0(A^\sO(\fm))$.
\end{proposition}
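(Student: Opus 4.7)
The plan is to deduce the equality from the injectivity of the Brauer--Nesbitt map (Proposition \ref{brauer_nesbitt}), which lets us detect a class in the Grothendieck group by characteristic polynomials, combined with Proposition \ref{bn_reduction_theorem}, which tells us that those characteristic polynomials depend only on the generic fiber $V = \wt{V}^K$, not on the particular $\sO$-form chosen.

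The first step is a reduction to the situation in which the Brauer--Nesbitt attractor is contained in $\sO$, since otherwise Proposition \ref{bn_reduction_theorem} does not apply. By Lemma \ref{gates_containing_omega}, we can choose a weak $A$-gate $(\sO', \fm')$ in $\fp$ that dominates $\sO$ and contains $\Omega_A$. The scalar extensions $\wt{V}^{\sO'}$ and $\wt{W}^{\sO'}$ are then $\sO'$-free $A^{\sO'}$-forms of $V$, and a routine base-change computation
\[
\wt{V}^{\sO'}/\fm'\wt{V}^{\sO'} \;\cong\; \rk(\fm') \otimes_\sO \wt{V} \;\cong\; \rk(\fm') \otimes_{\rk(\fm)} (\wt{V}/\fm\wt{V})
\]
(and similarly for $\wt{W}$) identifies the images of $[\wt{V}/\fm\wt{V}]$ and $[\wt{W}/\fm\wt{W}]$ under the scalar-extension map $\rG_0(A^\sO(\fm)) \to \rG_0(A^{\sO'}(\fm'))$ with $[\wt{V}^{\sO'}/\fm'\wt{V}^{\sO'}]$ and $[\wt{W}^{\sO'}/\fm'\wt{W}^{\sO'}]$ respectively. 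Since the field extension $\rk(\fm) \hookrightarrow \rk(\fm')$ induces an injective map on Grothendieck groups by Lemma \ref{d_field_ext_injective}, it suffices to prove the equality after passing to $(\sO',\fm')$, and we may therefore assume from the outset that $\Omega_A \subseteq \sO$.

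With this assumption in hand, Proposition \ref{bn_reduction_theorem} gives the two identities
\[
\mrm{bn}_A^{\fp,\rk(\fm)}([\wt{V}/\fm\wt{V}]) \;=\; \rt_A^{\theta^\fm} \circ \mrm{bn}_A^{(0)}([\wt{V}^K]) \;=\; \rt_A^{\theta^\fm} \circ \mrm{bn}_A^{(0)}([V])
\]
and analogously for $\wt{W}$, using $\wt{V}^K \cong V \cong \wt{W}^K$. The two right-hand sides coincide, so $[\wt{V}/\fm\wt{V}]$ and $[\wt{W}/\fm\wt{W}]$ lie in $\rG_0^+(A^\sO(\fm))$ with the same Brauer--Nesbitt image, and Proposition \ref{brauer_nesbitt} then yields $[\wt{V}/\fm\wt{V}] = [\wt{W}/\fm\wt{W}]$.

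The main subtlety in the argument is not a hard technical step but the bookkeeping in the reduction to $\Omega_A \subseteq \sO$: one has to verify that reductions modulo $\fm$ behave well under the scalar extension $\sO \to \sO'$ and that the injectivity of Lemma \ref{d_field_ext_injective} really transports the desired equality back from $\fm'$ to $\fm$. Everything else is a direct application of the tools established in the previous two subsections.
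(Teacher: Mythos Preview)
Your proof is correct and follows essentially the same approach as the paper: reduce via Lemma \ref{gates_containing_omega} to a dominating weak $A$-gate containing $\Omega_A$, use the injectivity from Lemma \ref{d_field_ext_injective} to transport the equality back, and in the case $\Omega_A \subs \sO$ combine Proposition \ref{bn_reduction_theorem} with the injectivity of the Brauer--Nesbitt map (Proposition \ref{brauer_nesbitt}). The only cosmetic difference is that the paper treats the case $\Omega_A \subs \sO$ first and then performs the reduction, whereas you do the reduction first.
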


\begin{proof}
First, let us assume that $\sO$ contains $\Omega_A$. We have $\wt{V}^K \cong V \cong \wt{W}^K$ and so by Proposition \ref{bn_reduction_theorem} we get
\[
\mrm{bn}_A^{\fp ,\rk(\fm )} ( \lbrack \wt{V}/\fm \wt{V} \rbrack) = \rt_A^{\theta^{\fm }} \circ \mrm{bn}_A^{(0)}( \lbrack \wt{V}^K \rbrack)  = \rt_A^{\theta^{\fm }} \circ \mrm{bn}_A^{(0)}( \lbrack \wt{W}^K \rbrack)  = \mrm{bn}_A^{\fp ,\rk(\fm )} ( \lbrack \wt{W}/\fm \wt{W} \rbrack) \;.
\]
The Brauer–Nesbitt map is injective by Proposition \ref{brauer_nesbitt}, proving the claim in this case.
Now, assume that $\sO$ is arbitrary. Let $(\sO',\fm')$ be a weak $A$-gate in $\fp$ dominating $\sO$ and containing $\Omega_A$. This exists by Lemma \ref{gates_containing_omega}. Both $\wt{V}^{\sO'}$ and $\wt{W}^{\sO'}$ are two $\sO'$-free $A^{\sO'}$-forms of $V$. By the aforementioned we have 
\begin{align*}
\gamma_{A^\sO}^{\fm,\fm'}(\lbrack \wt{V}/\fm \wt{V} \rbrack ) & = \lbrack (\wt{V}/\fm\wt{V})^{\rk(\fm')} \rbrack = \lbrack \wt{V}^{\sO'}/\fm' \wt{V}^{\sO'} \rbrack = \lbrack \wt{W}^{\sO'}/\fm' \wt{W}^{\sO'} \rbrack \\ & = \lbrack (\wt{W}/\fm\wt{W})^{\rk(\fm')} \rbrack = \gamma_{A^\sO}^{\fm,\fm'}(\lbrack \wt{W}/\fm \wt{W} \rbrack ) \;.
\end{align*}
Since $\gamma_{A^\sO}^{\fm,\fm'}$ is injective, the claim follows.
\end{proof}

The preceding proposition immediately implies the following result about the existence of decomposition maps for any choice of $A$-gate.

\begin{corollary} \label{dec_morphisms_exist}
If $(\sO,\fm)$ is an $A$-gate in $\fp$, then there is a unique morphism 
\[
\rd_A^{\fp,\sO}: \rG_0(A^K) \rarr \rG_0(A(\fp))
\]
of Grothendieck groups such that for any finitely generated $A^K$-module $V$ and any $\sO$-free $A^\sO$-form $\wt{V}$ of $V$ we have
\[
\rd_A^{\fp,\sO}( \lbrack V \rbrack ) = (\gamma_A^{\fp,\fm})^{-1} ( \lbrack \wt{V}/\fm\wt{V} \rbrack ) \;.
\]
\end{corollary}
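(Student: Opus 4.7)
My approach is to define $\rd_A^{\fp,\sO}$ on the classes of simple $A^K$-modules first and extend by $\mathbb{Z}$-linearity, then verify the displayed formula on arbitrary finitely generated $A^K$-modules. Since $A^K$ is a finite-dimensional $K$-algebra, $\rG_0(A^K)$ is free abelian on the classes $[S_i]$ of a chosen set of simple $A^K$-modules. For each $S_i$, gate axiom (ii) provides an $\sO$-free $A^\sO$-form $\wt{S_i}$; gate axiom (iii) places $[\wt{S_i}/\fm\wt{S_i}]$ inside the image of the injective map $\gamma_A^{\fp,\fm}$; and Proposition \ref{reduction_well_defined} shows that the element $d_i \dopgleich (\gamma_A^{\fp,\fm})^{-1}([\wt{S_i}/\fm\wt{S_i}])$ is independent of the choice of form. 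Sending $[S_i] \mapsto d_i$ then defines the morphism $\rd_A^{\fp,\sO}$, and its uniqueness is automatic from the fact that the displayed formula pins down its value on every simple class.

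What remains is to verify that for an arbitrary finitely generated $A^K$-module $V$ with $\sO$-free $A^\sO$-form $\wt V$, one has
\[
[\wt V/\fm\wt V] = \sum_i m_i [\wt{S_i}/\fm\wt{S_i}] \in \rG_0(A^\sO(\fm)),
\]
where $[V] = \sum_i m_i [S_i]$ in $\rG_0(A^K)$. Assuming first that $\Omega_A \subseteq \sO$, Proposition \ref{bn_reduction_theorem} equates the Brauer--Nesbitt image of the left-hand side with $\rt_A^{\theta^\fm} \circ \mrm{bn}_A^{(0)}([V])$, and similarly for each $\wt{S_i}$. The identity $[V] = \sum_i m_i [S_i]$ holds already in $\rG_0^+(A^K)$, since in the Grothendieck group every module's class equals that of its composition factors; applying the semigroup morphism $\mrm{bn}_A^{(0)}$ and then $\rt_A^{\theta^\fm}$ shows that the two sides have equal Brauer--Nesbitt images, so injectivity of $\mrm{bn}_A^{\fp,\rk(\fm)}$ (Proposition \ref{brauer_nesbitt}) yields the equality.

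For arbitrary $\sO$, I would use Lemma \ref{gates_containing_omega} to pick a weak $A$-gate $(\sO',\fm')$ in $\fp$ dominating $(\sO,\fm)$ and containing $\Omega_A$. The $\sO'$-modules $\wt V^{\sO'}$ and $\wt{S_i}^{\sO'}$ are $\sO'$-free $A^{\sO'}$-forms of $V$ and $S_i$ respectively, so the preceding case delivers the analogous equality in $\rG_0(A^{\sO'}(\fm'))$; this equality is precisely the image under the injective map $\gamma_{A^\sO}^{\fm,\fm'}$ of the desired equality in $\rG_0(A^\sO(\fm))$, hence the latter holds. I anticipate that the principal obstacle will be organizational rather than conceptual: tracking the various scalar-extension and residue-field identifications between $\sO$ and $\sO'$, and carefully translating between the additive Grothendieck-group picture and the multiplicative picture of characteristic polynomials under Brauer--Nesbitt.
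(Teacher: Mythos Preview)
Your proof is correct and follows essentially the same route as the paper, which regards the corollary as an immediate consequence of Proposition~\ref{reduction_well_defined} together with gate axiom~(iii). What you have done is spell out the one point the paper leaves implicit—namely the additivity of the assignment $[V]\mapsto(\gamma_A^{\fp,\fm})^{-1}([\wt V/\fm\wt V])$ on short exact sequences—by reducing it (via Proposition~\ref{bn_reduction_theorem} and the injectivity of the Brauer--Nesbitt map) to the equality $[V]=\sum_i m_i[S_i]$ in $\rG_0^+(A^K)$; this is exactly the mechanism already underlying the proof of Proposition~\ref{reduction_well_defined}, so the two arguments are the same in substance.
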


Here, we used the fact that $\lbrack \wt{V}/\fm\wt{V} \rbrack$ lies in the image of $\gamma_A^{\fp,\fm}$ by definition of an $A$-gate. The map $\rd_A^{\fp,\sO}$ is called the \word{decomposition map} of $A$ in $\fp$ with respect to $\sO$.  This map is essentially just reduction of modules modulo $\fp$, except that we might have to pass through the extension ring $(\sO,\fm)$ to find $\sO$-free forms and then reduce modulo $\fm$. 

\subsection{Dependence on the choice of $A$-gates}

An intricate problem is to understand how decomposition maps depend on the choice of the $A$-gate used for their definition. Geck and Rouquier used the Brauer–Nesbitt map to show that in case $R$ is normal all decomposition maps for a prime $\fp$ coincide. We extend this idea here to show that without assuming that $R$ is normal the decomposition maps in $\fp$ only depend on primes lying over $\fp$ in the Brauer–Nesbitt attractor of $A$. 

\begin{proposition} \label{dominating_gates}
Let $\sO$ be an $A$-gate in $\fp$. Then any local ring $\sO'$ in $K$ dominating $\sO$ is also an $A$-gate in $\fp$ and $\rd_A^{\fp,\sO} = \rd_A^{\fp,\sO'}$.
\end{proposition}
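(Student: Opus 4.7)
The plan is to verify the three conditions of Definition \ref{gate_definition} for $\sO'$ and then exhibit the equality of decomposition maps by computing both using a common $\sO$-free form.

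First, I would dispatch the easy parts. Condition (i) for $\sO'$ follows directly from the fact that domination means $\fm' \cap \sO = \fm$, so $R \cap \fm' = R \cap (\fm' \cap \sO) = R \cap \fm = \fp$. Condition (ii) is already recorded in the lemma immediately following Definition \ref{gate_definition}: any local ring in $K$ dominating a weak $A$-gate is again a weak $A$-gate. So $\sO'$ is at least a weak $A$-gate in $\fp$.

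The main work is condition (iii), which is where the nontriviality lies. Let $\wt{W}$ be a finitely generated $\sO'$-free $A^{\sO'}$-module and set $V \dopgleich \wt{W}^K$. Since $\sO$ is an $A$-gate, there exists an $\sO$-free $A^\sO$-form $\wt{V}$ of $V$, and the scalar extension $\wt{V}^{\sO'} = \sO' \otimes_\sO \wt{V}$ is then an $\sO'$-free $A^{\sO'}$-form of $V$. Applying Proposition \ref{reduction_well_defined} to the weak $A$-gate $\sO'$ and the two $\sO'$-free forms $\wt{W}$ and $\wt{V}^{\sO'}$ of $V$, we obtain
\[
[\wt{W}/\fm'\wt{W}] \;=\; [\wt{V}^{\sO'}/\fm'\wt{V}^{\sO'}] \;=\; \gamma_{A^\sO}^{\fm,\fm'}\bigl([\wt{V}/\fm\wt{V}]\bigr)
\]
in $\rG_0(A^{\sO'}(\fm'))$, where the second equality uses that $\wt{V}^{\sO'}/\fm'\wt{V}^{\sO'} = (\wt{V}/\fm\wt{V}) \otimes_{\rk(\fm)} \rk(\fm')$. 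Since $\sO$ is an $A$-gate, $[\wt{V}/\fm\wt{V}]$ lies in the image of $\gamma_A^{\fp,\fm}$, say $[\wt{V}/\fm\wt{V}] = \gamma_A^{\fp,\fm}(x)$ for some $x \in \rG_0(A(\fp))$. Because the tower of field extensions $\rk(\fp) \hookrightarrow \rk(\fm) \hookrightarrow \rk(\fm')$ gives the factorization $\gamma_A^{\fp,\fm'} = \gamma_{A^\sO}^{\fm,\fm'} \circ \gamma_A^{\fp,\fm}$, we conclude $[\wt{W}/\fm'\wt{W}] = \gamma_A^{\fp,\fm'}(x)$, which shows (iii) and thereby proves that $\sO'$ is an $A$-gate in $\fp$.

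For the equality $\rd_A^{\fp,\sO}([V]) = \rd_A^{\fp,\sO'}([V])$, it suffices by linearity to check it on $[V]$ for $V$ a finite-dimensional $A^K$-module. Using the same $\wt{V}$ and its scalar extension $\wt{V}^{\sO'}$ as $\sO$-free and $\sO'$-free forms of $V$ respectively, we compute
\[
\rd_A^{\fp,\sO'}([V]) = (\gamma_A^{\fp,\fm'})^{-1}\bigl([\wt{V}^{\sO'}/\fm'\wt{V}^{\sO'}]\bigr) = (\gamma_A^{\fp,\fm'})^{-1}\bigl(\gamma_{A^\sO}^{\fm,\fm'}([\wt{V}/\fm\wt{V}])\bigr),
\]
and applying the factorization $\gamma_A^{\fp,\fm'} = \gamma_{A^\sO}^{\fm,\fm'} \circ \gamma_A^{\fp,\fm}$ together with the injectivity of $\gamma_{A^\sO}^{\fm,\fm'}$ collapses the right-hand side to $(\gamma_A^{\fp,\fm})^{-1}([\wt{V}/\fm\wt{V}]) = \rd_A^{\fp,\sO}([V])$, as desired.

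The one step that requires some care — and is the real content of the proposition — is property (iii) for $\sO'$: we must compare the reduction of an arbitrary $\sO'$-free form against one that descends from $\sO$, and this is exactly what the well-definedness result Proposition \ref{reduction_well_defined} furnishes. Once this is in place, the equality of decomposition maps is a formal consequence of the compatibility of the $\gamma$-maps with composition of residue field extensions.
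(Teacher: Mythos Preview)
Your proof is correct and follows essentially the same approach as the paper's own proof: both hinge on applying Proposition \ref{reduction_well_defined} to compare the reduction of an arbitrary $\sO'$-free form with that of one obtained by extending an $\sO$-free form, then invoking the gate property of $\sO$ and the compatibility $\gamma_A^{\fp,\fm'} = \gamma_{A^\sO}^{\fm,\fm'} \circ \gamma_A^{\fp,\fm}$. The only differences are cosmetic (the roles of the letters $\wt{V}$ and $\wt{W}$ are swapped relative to the paper) and that you spell out condition (i) and the factorization of the $\gamma$-maps a bit more explicitly.
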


\begin{proof}
We already know that $\sO'$ is a weak $A$-gate. Let $\fm$ be the maximal ideal of $\sO$ and let $\fm'$ be the maximal ideal of $\sO'$. Let $\wt{V}$ be a finitely-generated $\sO'$-free $A^{\sO'}$-module and let $V$ be the scalar extension of $\wt{V}$ to $A^K$. Let $\wt{W}$ be an $\sO$-free $A^{\sO}$-form of $V$. Then $\wt{W}^{\sO'}$ is another $\sO'$-free $A^{\sO'}$-form of $V$ and so it follows from Proposition \ref{reduction_well_defined} that 
\[
\gamma_{A^\sO}^{\fm,\fm'}(\lbrack \wt{W}/\fm\wt{W} \rbrack) = \lbrack (\wt{W}/\fm \wt{W})^{\rk(\fm')} \rbrack = \lbrack \wt{W}^{\sO'}/\fm' \wt{W}^{\sO'} \rbrack = \lbrack \wt{V}/\fm' \wt{V} \rbrack 
\]
in $\rG_0(A^{\sO'}(\fm'))$. Since $\sO$ is an $A$-gate, the reduction $\lbrack \wt{W}/\fm \wt{W} \rbrack$ is contained in the image of $\rG_0(A(\fp))$ in $\rG_0(A^\sO(\fm))$. The above equation thus shows that the reduction $\lbrack \wt{V}/\fm' \wt{V} \rbrack$ is contained in the image of $\rG_0(A(\fp))$ in $\rG_0(A^{\sO'}(\fm'))$. Hence, $\sO'$ is an $A$-gate. From the above equation we get
\[
(\gamma_A^{\fp,\fm})^{-1}(\lbrack \wt{W}/\fm\wt{W} \rbrack) = (\gamma_{A}^{\fp,\fm'})^{-1}(\lbrack \wt{V}/\fm'\wt{V} \rbrack)
\]
and so
\[
\rd_A^{\fp,\sO}( \lbrack V \rbrack) = (\gamma_A^{\fp,\fm})^{-1}( \lbrack \wt{W}/\fm\wt{W} \rbrack ) = (\gamma_A^{\fp,\fm'})^{-1}( \lbrack \wt{V}/\fm'\wt{V} \rbrack) = \rd_A^{\fp,\sO'}( \lbrack V \rbrack) \;.
\]
\end{proof}

The lemma above shows that any decomposition map is already realized by a valuation ring so that we have at most one decomposition map per valuation ring in $K$ dominating $R_\fp$. We will improve this result a bit further. 

\begin{proposition} \label{brauer_nesbitt_gate}
If $(\sO,\fm)$ is an $A$-gate in $\fp$, then the diagram
\[
\begin{tikzcd}
\rG_0^+(A^K) \arrow[hookrightarrow]{rrr}{\mrm{bn}_A^{(0)}} \arrow{d}[swap]{\rd_A^{\fp,\sO}} & & & \Hom_{\cat{Set}}(A, \Omega_A \lbrack X \rbrack) \arrow{d}{\rt_A^{\theta^{\fm  \cap \Omega_A}}} \\
\rG_0^+(A(\fp )) \arrow[hookrightarrow]{rrr}[swap]{\rt_A^{\iota^{\fp ,\fm  \cap \Omega_A}} \circ \mrm{bn}_A^{\fp }} & & & \Hom_{\cat{Set}}(A, \rk(\fm  \cap \Omega_A) \lbrack X \rbrack)
\end{tikzcd}
\]
commutes. Here, $\iota^{\fp,\fm \cap \Omega_A}: \rk(\fp) \hookrightarrow \rk(\fm \cap \Omega_A)$ is the embedding and $\theta^{\fm \cap \Omega_A}: \Omega_A \rarr \rk(\fm \cap \Omega_A)$ is the canonical morphism. 
\end{proposition}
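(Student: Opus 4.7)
My plan is to reduce this to Proposition \ref{bn_reduction_theorem}, which already yields exactly such an identity whenever the gate contains $\Omega_A$, and then to descend from $\rk(\fm)$ to the smaller residue field $\rk(\fm \cap \Omega_A)$ by the injectivity of restriction of scalars along an injective ring homomorphism.

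I would begin with a reduction to gates containing $\Omega_A$. By Lemma \ref{gates_containing_omega} any $A$-gate $\sO$ is dominated by a valuation ring $\sO'$ in $K$ containing $\Omega_A$, and by Proposition \ref{dominating_gates} this $\sO'$ is itself an $A$-gate in $\fp$ with $\rd_A^{\fp,\sO}=\rd_A^{\fp,\sO'}$; since the statement involves $\fm \cap \Omega_A$, which only has meaning as a prime of $\Omega_A$ when $\Omega_A \subseteq \sO$, I simply replace $\sO$ by $\sO'$ and assume from now on that $\Omega_A\subseteq \sO$. Writing $\fq \dopgleich \fm\cap \Omega_A$, the restriction of $\theta^\fm$ to $\Omega_A$ factors as $\theta^\fq:\Omega_A\rarr \rk(\fq)$ followed by the canonical embedding $\iota:\rk(\fq)\hookrightarrow \rk(\fm)$, and correspondingly the canonical embedding $\iota^\fm:\rk(\fp)\hookrightarrow \rk(\fm)$ (the field extension defining $\gamma_A^{\fp,\fm}$) factors as $\iota\circ \iota^{\fp,\fq}$. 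Passing to maps into polynomial rings gives $\rt_A^{\theta^\fm}=\rt_A^\iota \circ \rt_A^{\theta^\fq}$ on $\Hom_{\cat{Set}}(A,\Omega_A\lbrack X \rbrack)$ and $\rt_A^{\iota^\fm}=\rt_A^\iota \circ \rt_A^{\iota^{\fp,\fq}}$ on $\Hom_{\cat{Set}}(A,\rk(\fp)\lbrack X \rbrack)$.

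Next, I fix $\lbrack V \rbrack\in \rG_0^+(A^K)$ and an $\sO$-free $A^\sO$-form $\wt V$, which exists because $\sO$ is an $A$-gate. Corollary \ref{dec_morphisms_exist} gives $\gamma_A^{\fp,\fm}(\rd_A^{\fp,\sO}(\lbrack V \rbrack))=\lbrack \wt V/\fm \wt V\rbrack$. Applying $\mrm{bn}_A^{\fp,\rk(\fm)}$ to both sides produces on the right, by Proposition \ref{bn_reduction_theorem}, the element $\rt_A^{\theta^\fm}(\mrm{bn}_A^{(0)}(\lbrack V \rbrack))$, and on the left, by Lemma \ref{brauer_nesbitt_field_ext} applied to the extension $\iota^\fm$, the element $\rt_A^{\iota^\fm}(\mrm{bn}_A^\fp(\rd_A^{\fp,\sO}(\lbrack V \rbrack)))$. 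Since $\mrm{bn}_A^{(0)}(\lbrack V \rbrack)\in \Hom_{\cat{Set}}(A,\Omega_A\lbrack X \rbrack)$ by the very definition of $\Omega_A$, substituting the factorizations from the previous paragraph turns the resulting equality into $\rt_A^\iota(x)=\rt_A^\iota(y)$ with $x=\rt_A^{\iota^{\fp,\fq}}(\mrm{bn}_A^\fp(\rd_A^{\fp,\sO}(\lbrack V \rbrack)))$ and $y=\rt_A^{\theta^\fq}(\mrm{bn}_A^{(0)}(\lbrack V \rbrack))$. Injectivity of $\iota$ makes $\rt_A^\iota$ injective, so $x=y$, which is precisely the claimed commutativity on honest classes; additivity then extends it to all of $\rG_0^+(A^K)$.

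The main obstacle here is conceptual rather than computational: the mathematical substance—that reducing matrices before or after taking characteristic polynomials gives the same answer once coefficients are pulled into $\Omega_A$—is already contained in Proposition \ref{bn_reduction_theorem}. The effort goes into two bookkeeping steps: reducing to gates that contain $\Omega_A$ (handled by Proposition \ref{dominating_gates}, which guarantees that neither the decomposition map nor the intermediate prime changes) and splitting the residue-field morphisms so that the injectivity of restriction of scalars along $\iota$ can be used to descend the equality from $\rk(\fm)$ to the intermediate field $\rk(\fq)$.
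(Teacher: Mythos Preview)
Your proof is correct and follows essentially the same approach as the paper: both begin with the reduction to $\Omega_A\subseteq \sO$ via Lemma \ref{gates_containing_omega} and Proposition \ref{dominating_gates}, and then establish the commutativity via Proposition \ref{bn_reduction_theorem} and Lemma \ref{brauer_nesbitt_field_ext}. The paper's own proof simply cites \cite[Theorem 7.4.3]{GP-Coxeter-Hecke} for the second step, whereas you spell out the argument explicitly, including the descent from $\rk(\fm)$ to $\rk(\fm\cap\Omega_A)$ via injectivity of $\rt_A^\iota$.
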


\begin{proof}
Because of Lemma \ref{gates_containing_omega} and Proposition \ref{dominating_gates} we can assume that $\sO$ contains $\Omega_A$. The claim can now be proven by the same arguments as in the proof of  \cite[Theorem 7.4.3]{GP-Coxeter-Hecke}.
\end{proof}

\begin{definition}
For an $A$-gate $\sO$ in $\fp$ let $\mrm{BN}(\sO)$ be the set of contractions of the maximal ideals of local rings in $K$ dominating $\sO$ and containing $\Omega_A$. This is a non-empty subset of the set of prime ideals of $\Omega_A$ lying over $\fp$.
\end{definition}

\begin{corollary} \label{dec_same_if_over_same_prime}
If $\sO$ and $\sO'$ are two $A$-gates in $\fp$ such that $\mrm{BN}(\sO)$ and $\mrm{BN}(\sO')$ have non-empty intersection, then $\rd_A^{\fp,\sO} = \rd_A^{\fp,\sO'}$.
\end{corollary}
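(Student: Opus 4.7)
The plan is to reduce to the case where both gates contain $\Omega_A$ via Proposition \ref{dominating_gates}, and then exploit the injectivity of the Brauer--Nesbitt map to conclude that two decomposition maps having the same image under $\mrm{bn}_A^{\fp}$ must coincide.

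More concretely: pick a prime $\fq \in \mrm{BN}(\sO) \cap \mrm{BN}(\sO')$. By definition of $\mrm{BN}(\sO)$ there exists a local ring $(\sO_1, \fm_1)$ in $K$ dominating $\sO$ and containing $\Omega_A$ with $\fm_1 \cap \Omega_A = \fq$, and likewise a local ring $(\sO_1', \fm_1')$ dominating $\sO'$, containing $\Omega_A$, with $\fm_1' \cap \Omega_A = \fq$. By Proposition \ref{dominating_gates}, $\sO_1$ and $\sO_1'$ are both $A$-gates in $\fp$ with $\rd_A^{\fp,\sO} = \rd_A^{\fp,\sO_1}$ and $\rd_A^{\fp,\sO'} = \rd_A^{\fp,\sO_1'}$. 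So it suffices to prove $\rd_A^{\fp,\sO_1} = \rd_A^{\fp,\sO_1'}$.

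Now I would apply Proposition \ref{brauer_nesbitt_gate} to each of $\sO_1$ and $\sO_1'$. Because $\fm_1 \cap \Omega_A = \fq = \fm_1' \cap \Omega_A$, the right-hand vertical map $\rt_A^{\theta^\fq}$ and the target in the bottom-right corner are the same for both diagrams; similarly the bottom horizontal embedding $\rt_A^{\iota^{\fp,\fq}} \circ \mrm{bn}_A^\fp$ is the same. Chasing the two diagrams on $\rG_0^+(A^K)$ gives
\[
\rt_A^{\iota^{\fp,\fq}} \circ \mrm{bn}_A^\fp \circ \rd_A^{\fp,\sO_1} \;=\; \rt_A^{\theta^\fq} \circ \mrm{bn}_A^{(0)} \;=\; \rt_A^{\iota^{\fp,\fq}} \circ \mrm{bn}_A^\fp \circ \rd_A^{\fp,\sO_1'}.
\]
Since $\mrm{bn}_A^\fp$ is injective by Proposition \ref{brauer_nesbitt} and $\rt_A^{\iota^{\fp,\fq}}$ is injective because $\iota^{\fp,\fq}$ is a field extension, we can cancel both maps and conclude that $\rd_A^{\fp,\sO_1}$ and $\rd_A^{\fp,\sO_1'}$ agree on the semigroup $\rG_0^+(A^K)$. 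As $\rG_0^+(A^K)$ generates the abelian group $\rG_0(A^K)$, this equality extends to all of $\rG_0(A^K)$, yielding the claim.

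The only subtlety — hardly an obstacle, but the step that requires care — is confirming that the non-emptiness of $\mrm{BN}(\sO) \cap \mrm{BN}(\sO')$ really supplies two dominating gates that factor through a common residue field of $\Omega_A$ at the single prime $\fq$. Once that matching is in place, the diagram of Proposition \ref{brauer_nesbitt_gate} together with the injectivity afforded by Proposition \ref{brauer_nesbitt} does all the work.
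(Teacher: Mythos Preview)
Your argument is correct and is essentially the same as the paper's own proof: reduce via Proposition \ref{dominating_gates} to gates containing $\Omega_A$ whose maximal ideals contract to the common prime $\fq$, then apply Proposition \ref{brauer_nesbitt_gate} and cancel the injective map $\rt_A^{\iota^{\fp,\fq}} \circ \mrm{bn}_A^{\fp}$. Your explicit remark about extending from $\rG_0^+(A^K)$ to $\rG_0(A^K)$ is a point the paper leaves implicit, but otherwise the two proofs coincide.
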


\begin{proof}
By Proposition \ref{dominating_gates} we can assume that $\sO$ and $\sO'$ contain $\Omega_A$ and that their maximal ideals $\fm$ and $\fm'$ contract to the same prime ideal $\fm''$ of $\Omega_A$. We now apply Proposition \ref{brauer_nesbitt_gate} to $\sO$ and $\sO'$. The upper right part of the diagram of this proposition, the composition $\rt_A^{\theta^{\fm''}} \circ \mrm{bn}_A^{(0)}$, is in both cases the same so that due to the commutativity of the diagram we have
\[
\rt_A^{\iota^{\fp ,\fm''}} \circ \mrm{bn}_A^{\fp } \circ \rd_A^{\fp,\sO} = \rt_A^{\iota^{\fp ,\fm''}} \circ \mrm{bn}_A^{\fp } \circ \rd_A^{\fp,\sO'} \;.
\]
The map $\rt_A^{\iota^{\fp ,\fm''}} \circ \mrm{bn}_A^{\fp }$ is injective and thus left cancelable so that we get $\rd_A^{\fp,\sO} = \rd_A^{\fp,\sO'}$.
\end{proof}

An immediate further corollary is the following.

\begin{corollary}
Suppose that $\fp$ is unibranched in the extension $R \subs \Omega_A$, i.e., there is just one prime ideal in $\Omega_A$ lying over $\fp$. Then $\rd_A^{\fp,\sO} = \rd_A^{\fp,\sO'}$ for all $A$-gates $\sO,\sO'$ in $\fp$. The assumption holds if $R = \Omega_A$, and this in turn holds if $R$ is normal.
\end{corollary}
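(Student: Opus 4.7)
The plan is to read the first assertion off Corollary \ref{dec_same_if_over_same_prime}. Given two $A$-gates $\sO$ and $\sO'$ in $\fp$, I want to show that $\mrm{BN}(\sO) \cap \mrm{BN}(\sO') \neq \emptyset$. The key observation is that both sets are non-empty subsets of the set of prime ideals of $\Omega_A$ lying over $\fp$. Non-emptiness is immediate from Lemma \ref{gates_containing_omega}, which guarantees that any $A$-gate is dominated by one containing $\Omega_A$. For the ``lying over $\fp$'' part I would simply chase contractions: if $\sO''$ is a local ring in $K$ dominating $\sO$ (with maximal ideal $\fm''$) and containing $\Omega_A$, then $\fm'' \cap R = (\fm'' \cap \sO) \cap R = \fm \cap R = \fp$, because $\sO$ is an $A$-gate in $\fp$; hence $(\fm'' \cap \Omega_A) \cap R = \fp$ as well.

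Under the unibranched hypothesis there is exactly one prime of $\Omega_A$ over $\fp$, so the non-empty sets $\mrm{BN}(\sO)$ and $\mrm{BN}(\sO')$ must both equal this singleton and so certainly meet. Corollary \ref{dec_same_if_over_same_prime} then delivers $\rd_A^{\fp,\sO} = \rd_A^{\fp,\sO'}$, as required.

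For the two supplementary claims, the implication ``$R = \Omega_A$ implies unibranchedness at $\fp$'' is tautological: $\fp$ is the only prime of $R = \Omega_A$ that contracts to $\fp$. For ``$R$ normal implies $R = \Omega_A$'' I would invoke Lemma \ref{bn_attractor_finite_type}: it tells us that $\Omega_A$ is a subring of $K$ containing $R$ and integral over $R$. Since normality of $R$ means precisely that $R$ is integrally closed in its field of fractions, every element of $\Omega_A$ already lies in $R$, so $\Omega_A = R$.

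I do not foresee any genuine obstacle here: the whole content is already packaged into Corollary \ref{dec_same_if_over_same_prime}, Lemma \ref{gates_containing_omega}, and Lemma \ref{bn_attractor_finite_type}. The only point requiring mild care is the bookkeeping check that $\mrm{BN}(\sO)$ really consists of primes lying over $\fp$ in $\Omega_A$—everything else is formal.
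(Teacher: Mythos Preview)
Your proof is correct and follows exactly the approach the paper intends: the paper presents this as ``an immediate further corollary'' of Corollary~\ref{dec_same_if_over_same_prime} and gives no proof, and the definition of $\mrm{BN}(\sO)$ already records that it is a non-empty subset of the primes of $\Omega_A$ lying over $\fp$. Your bookkeeping check and the supplementary arguments via Lemma~\ref{bn_attractor_finite_type} are precisely the details one would fill in.
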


\subsection{The noetherian case} \label{noeth_case}

For our main theorem about the generic behavior of decomposition maps we will need to assume that the base ring is noetherian. The reason for this is that we need \textit{discrete} valuation rings as $A$-gates as only then we can conclude that contractions of $A^K$-modules to $A^\sO$ are $\sO$-free (recall that the difference between a valuation ring and a discrete valuation ring is that for the latter all torsion-free modules are already free and not only the finitely generated ones). The reader will see the importance of discrete valuation rings in most of the following sections and this is why we make the following definition.

\begin{definition}
A \word{discrete} $A$-gate in $\fp$ is an $A$-gate in $\fp$ which is a discrete valuation ring.
\end{definition} 

We are now confronted with two problems: 

\begin{enum_thm}
\item Do there exist discrete $A$-gates?
\item Can we realize any decomposition map by a discrete $A$-gate? 
\end{enum_thm}
We show that the answer to both questions is positive, provided that $R$ is noetherian and $A(\fp)$ splits. This result makes the theory of decomposition maps in the noetherian split case much slicker than before. The first ingredient for proving this is the following theorem by Grothendieck \cite[7.1.7]{Grothendieck.A61Elements-d_EGA2} (see also \cite[15.6]{GorWed10-Algebraic-geomet}) on the existence of dominating discrete valuation rings in the noetherian case.

\begin{theorem}[Grothendieck] \label{discrete_exists}
For any \textit{noetherian} integral domain $R$ with quotient field $K$, any \textit{finitely generated} field extension $L$ of $K$, and any non-zero prime ideal $\fp$ of $R$ there exists a \textit{discrete} valuation ring $\sO$ between $R$ and $L$ with maximal ideal $\fm$ and $R \cap \sO = \fp$.
\end{theorem}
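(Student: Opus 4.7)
Since this is Grothendieck's theorem (EGA IV, 7.1.7), the natural plan is a two-stage argument: first produce a discrete valuation ring in $K$ itself dominating $R_\fp$, then extend it to the finitely generated field extension $L$ without losing discreteness.

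First, I would localize at $\fp$ and replace $R$ by $R_\fp$, reducing to the case of a noetherian local domain with maximal ideal $\fp$; the condition $R \cap \sO = \fp$ then becomes exactly the condition that $\sO$ dominates $R$. Next, I would find a discrete valuation ring $\sO_K \subs K$ dominating $R$ by induction on $d = \dim R$. The base case $d = 1$ is Krull–Akizuki: the integral closure of $R$ in $K$ is then a Dedekind domain, and the localization at any maximal ideal lying over $\fp$ is a discrete valuation ring. For the inductive step, pick any nonzero $x \in \fp$ and form the affine blow-up ring $R' = R\lbrack \fp/x \rbrack \subs K$; this is noetherian, and a standard calculation on the blow-up $\mrm{Bl}_\fp(\Spec R)$ gives a prime $\fp' \subs R'$ lying over $\fp$ with $\dim R'_{\fp'} < d$, to which the inductive hypothesis applies and yields $\sO_K$.

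Finally, I would handle the extension from $K$ to $L$. Write $L = K(t_1, \ldots, t_n, \alpha)$ with $t_1, \ldots, t_r$ a transcendence basis and $\alpha$ algebraic over the purely transcendental part. Extend $\sO_K$ first to $K(t_1, \ldots, t_r)$ by taking the Gauss extension (which preserves discreteness), and then to $L$: a Krull–Akizuki type argument applied to the integral closure of the extended DVR inside $L$ shows that this integral closure is Dedekind, and any maximal ideal of it lying over the chosen one produces a discrete valuation ring $\sO$ on $L$ whose contraction to $R$ is $\fp$ by construction.

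The main obstacle is the inductive blow-up step, where one must verify that the chart $R\lbrack \fp/x \rbrack$ actually contains a prime of strictly smaller dimension over $\fp$—this is where the noetherian hypothesis is genuinely used, because in the non-noetherian case the blow-up can fail to decrease dimension. The second delicate point is discreteness of the extension to $L$: arbitrary extensions of valuations to larger fields need not remain of rank one, so the use of Krull–Akizuki (which crucially requires the one-dimensional noetherian setting one has just engineered) is essential.
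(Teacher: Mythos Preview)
The paper does not prove this theorem at all: it is quoted as a known result of Grothendieck with a citation to \cite[7.1.7]{Grothendieck.A61Elements-d_EGA2} (and \cite[15.6]{GorWed10-Algebraic-geomet}), and is then used as a black box in the proof of Theorem \ref{all_decs_are_discrete}. So there is no ``paper's proof'' to compare against; your proposal is a sketch of how to establish the cited result itself.

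As such a sketch, your outline is the standard one and is sound in its broad strokes: localize at $\fp$, reduce the construction inside $K$ to the one-dimensional case via a blow-up/affine chart argument (where Krull--Akizuki finishes), and then extend to the finitely generated extension $L$ via Gauss valuations on the transcendental part followed by another Krull--Akizuki step on the finite algebraic part. One small correction: the reference is EGA~II, not EGA~IV. The genuinely delicate point you flag---that some affine chart of the blow-up contains a prime over $\fp$ of strictly smaller height---is indeed the crux, and your remark that this is where noetherianity enters is correct; in a fully written proof this step requires care (it is not automatic that every chart drops dimension, only that some prime in the exceptional fiber does). For the purposes of this paper, however, none of this is needed: the theorem is simply imported.
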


The second ingredient is our result from the last section. It allows us to prove the following theorem.

\begin{theorem} \label{all_decs_are_discrete}
Let $\fp$ be a non-zero prime ideal of $R$. Suppose that $R$ is noetherian and that $A(\fp)$ splits. If $\sO$ is any $A$-gate in $\fp$, then there is a discrete $A$-gate $\sO'$ in $\fp$ with $\rd_A^{\fp,\sO} = \rd_A^{\fp,\sO'}$.
\end{theorem}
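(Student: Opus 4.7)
The plan is to reduce to an $A$-gate containing the Brauer–Nesbitt attractor, extract the relevant prime in $\Omega_A$, and then apply Grothendieck's theorem (Theorem \ref{discrete_exists}) to $\Omega_A$ rather than $R$. The key observation is that although $R$ may have few discrete valuation rings between it and $K$ (e.g.\ when $R$ is non-normal), $\Omega_A$ is noetherian (being of finite type over the noetherian ring $R$ by Lemma \ref{bn_attractor_finite_type}), so Grothendieck's theorem applies to it. Corollary \ref{dec_same_if_over_same_prime} will then identify the two decomposition maps via their common image in $\Spec(\Omega_A)$.

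First, I would use Lemma \ref{gates_containing_omega} to find a valuation ring $\sO_1$ in $K$ dominating $\sO$ and containing $\Omega_A$; by Proposition \ref{dominating_gates}, $\sO_1$ is again an $A$-gate in $\fp$ and $\rd_A^{\fp,\sO} = \rd_A^{\fp,\sO_1}$. Let $\fm_1$ be the maximal ideal of $\sO_1$ and set $\fq \dopgleich \fm_1 \cap \Omega_A$. Since $\fp \subs \fq$ and $\fp \neq 0$, the prime $\fq$ is a non-zero prime ideal of $\Omega_A$ with $\fq \cap R = \fp$, and by construction $\fq \in \mrm{BN}(\sO_1)$.

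Next, I would apply Theorem \ref{discrete_exists} to the noetherian integral domain $\Omega_A$, its fraction field $K$ (with $L = K$), and the non-zero prime $\fq$. This produces a discrete valuation ring $\sO'$ between $\Omega_A$ and $K$ whose maximal ideal $\fm'$ satisfies $\fm' \cap \Omega_A = \fq$, and in particular $\fm' \cap R = \fp$. Hence $\sO'$ dominates $R_\fp$, so $\sO'$ is a weak $A$-gate in $\fp$ by the standard fact that valuation rings in $K$ dominating $R_\fp$ are weak $A$-gates. Because $A(\fp)$ splits by hypothesis, Lemma \ref{split_gates_exist} promotes $\sO'$ to an $A$-gate in $\fp$, and it is discrete by construction.

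Finally, both $\sO_1$ and $\sO'$ contain $\Omega_A$ and their maximal ideals contract to the same prime $\fq$ of $\Omega_A$, so $\fq \in \mrm{BN}(\sO_1) \cap \mrm{BN}(\sO')$. Corollary \ref{dec_same_if_over_same_prime} thus yields $\rd_A^{\fp,\sO_1} = \rd_A^{\fp,\sO'}$, and combining with $\rd_A^{\fp,\sO} = \rd_A^{\fp,\sO_1}$ gives the desired equality. The only nontrivial step is recognizing that Grothendieck's theorem should be invoked for $\Omega_A$ in place of $R$—the point being that the Brauer–Nesbitt attractor is always noetherian in the noetherian setting, which is exactly what is needed to bypass the failure of normality of $R$; everything else is a routine concatenation of the preceding lemmas.
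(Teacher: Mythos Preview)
Your proof is correct and follows essentially the same approach as the paper: pick a prime $\fq$ of $\Omega_A$ lying over $\fp$ coming from a gate dominating $\sO$, apply Grothendieck's theorem to the noetherian ring $\Omega_A$ at $\fq$ to produce a discrete valuation ring $\sO'$, and then invoke Corollary~\ref{dec_same_if_over_same_prime}. The only cosmetic difference is that you make the intermediate gate $\sO_1$ explicit, whereas the paper simply picks $\fq \in \mrm{BN}(\sO)$ directly (which amounts to the same thing by definition of $\mrm{BN}(\sO)$).
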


\begin{proof}
Let $\fq \in \mrm{BN}(\sO)$ and recall that this is a prime ideal of $\Omega_A$ lying over $\fp$. Furthermore, recall from Lemma \ref{bn_attractor_finite_type} that $\Omega_A$ is an $R$-algebra of finite type. Hence, since $R$ is noetherian, also $\Omega_A$ is noetherian. We can now use Theorem \ref{discrete_exists} to deduce that there exists a discrete valuation ring $(\sO',\fm')$ between $\Omega_A$ and $K$ such that $\Omega_A \cap \fm' = \fq$. This ring is an $A$-gate by Lemma \ref{split_gates_exist}. Since $\fq \in \mrm{BN}(\sO')$ by construction, the claim follows immediately from Corollary \ref{dec_same_if_over_same_prime}. 
\end{proof}

\section{The main theorems} \label{main_theorems}

Our primary aim is to study for a finite free algebra $A$ over an integral domain $R$ topological and geometric properties of the set
\[
\mrm{DecGen}(A) \dopgleich \left\lbrace \fp \in \Spec(R) \ \middle| \begin{array}{l}  \tn{there is an }A\tn{-gate in } \fp \tn{ and } \rd_A^{\fp,\sO} \\  \tn{is trivial for any } A\tn{-gate } \sO \tn{ in } \fp \end{array} \right\rbrace \;.
\]
Here, \textit{trivial} means that $\rd_A^{\fp,\sO}$ induces a bijection between the isomorphism classes of simple modules of $A^K$ and those of $A(\fp)$. Equivalently, the \word{decomposition matrix} $\rD_A^{\fp,\sO}$, i.e., the matrix of $\rd_A^{\fp,\sO}$ in bases of the Grothendieck groups given by systems of simple modules, is a permutation matrix—and thus the identity matrix up to row and column permutations. If this is true and $(S_i)_{i \in I}$ is a system of representatives of the simple $A^K$-modules, then by choosing an arbitrary $A$-gate $\sO$ and $\sO$-free $A^\sO$-forms $\wt{S}_i$ of the $S_i$ we can produce a system of representatives of the simple $A(\fp)$-modules just by reducing the $\wt{S}_i$ modulo the maximal ideal of $\sO$. This is particularly nice if we manage to realize the $S_i$ over $R_\fp$ already in which case we do not even have to find an $A$-gate. In the following we list our main results about $\mrm{DecGen}(A)$. Their proofs will occupy the rest of this paper. \\

First, we have the genericity result.

\begin{theorem} \label{dec_gen_trivial}
Suppose that $R$ is noetherian and that $A^K$ splits. Then $\mrm{DecGen}(A)$ is a neighborhood of the generic point of $\Spec(R)$, i.e., it contains an open subset of $\Spec(R)$ containing the generic point. Hence, decomposition matrices are generically trivial.
\end{theorem}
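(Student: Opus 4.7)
The plan is to follow Geck's strategy for the one-dimensional setting and promote it to arbitrary noetherian base rings using Grothendieck's theorem on dominating discrete valuation rings (Theorem \ref{discrete_exists}) together with the reduction in Theorem \ref{all_decs_are_discrete}. The forthcoming sections \S\ref{split_locus}, \S\ref{connection_with_jac}, and \S\ref{jacobson_behavior} suggest the route: I will produce a non-empty open $U \subseteq \Spec(R)$ on which (i) $A(\fp)$ is split, and (ii) the Jacobson radical behaves as nicely as generically; then I will argue via a discrete $A$-gate that decomposition maps on $U$ are trivial.

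First, I would use \S\ref{split_locus} to carve out the split locus near $(0)$. Writing $A^K/\Jac(A^K) \cong \prod_{i=1}^r M_{n_i}(K)$, the finitely many matrix units of this product, together with a $K$-complement to $\Jac(A^K)$, all live in $A_f$ for some $f \in R \setminus \{0\}$. On the basic open set $D(f)$ this gives a surjection $A_f \twoheadrightarrow \prod_i M_{n_i}(R_f)$; specializing to any $\fp \in D(f)$, one sees that $A(\fp)$ is split with at least $r$ simple modules of dimensions at least $n_i$, and in particular $\dim_{\rk(\fp)} A(\fp)/\Jac(A(\fp)) \geq \sum n_i^2$. Call this open set $U_1$.

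Second, I would refine $U_1$ to an open $U \subseteq U_1$ on which the reverse inequality $\dim_{\rk(\fp)} A(\fp)/\Jac(A(\fp)) \leq \sum n_i^2$ also holds—equivalently, on which the rank of the radical does not drop. Here one notes that $\Jac(A^K)$ is a finitely generated $K$-submodule of $A^K$, so lifts of its generators are contained in $A_g$ for some $g \neq 0$. On $D(g) \cap U_1$ their images span an ideal $J(\fp) \subseteq A(\fp)$ which is nilpotent generically, and by a standard Fitting/semicontinuity argument the fibrewise rank of $J(\fp)$ is upper semicontinuous, hence attains the generic value $\dim A - \sum n_i^2$ on a non-empty open $U \subseteq D(g) \cap U_1$. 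On $U$ the images of these generators span $\Jac(A(\fp))$ exactly, because their image is nilpotent and the semisimple quotient has the correct dimension.

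Finally, to deduce triviality of $\rd_A^{\fp,\sO}$ for $\fp \in U$, I would invoke Theorem \ref{all_decs_are_discrete}: every decomposition map at such a $\fp$ agrees with one realized by a discrete $A$-gate $(\sO,\fm)$ provided by Grothendieck's theorem. Over the DVR $\sO$ the algebra $A^\sO$ has split generic and split special fibre of equal semisimple dimension $\sum n_i^2$, so Geck's radical-lifting argument applies verbatim: lifts of a semisimple basis of $A^\sO(\fm)$ give, modulo the generically-correct radical, matrix units of $A^\sO \otimes K$, and the dimensions force the decomposition matrix to be a permutation matrix. The main obstacle I expect is the second step—controlling $\Jac(A(\fp))$ on an open neighbourhood of $(0)$ without normality of $R$—because without normality the Brauer--Nesbitt attractor $\Omega_A$ need not coincide with $R$ and one cannot a priori specialize $\Jac$ cleanly inside $A$. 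This is precisely the point at which passing to the dominating discrete $A$-gate becomes essential, since over a DVR torsion-free modules are free and the radical of the generic fibre contracts and specializes controllably.
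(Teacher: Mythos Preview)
Your proposal is correct and follows essentially the same route as the paper: produce an open neighbourhood of $(0)$ on which the fibres are split and $\dim_{\rk(\fp)}\Jac(A(\fp))=\dim_K\Jac(A^K)$ (the paper does this via Proposition \ref{surjective_morphism_open_gen}, Theorem \ref{split_locus_almost_open}, Lemma \ref{jacgen}, and Theorem \ref{jac_dim_spec_eq_open}), then use Theorem \ref{all_decs_are_discrete} to pass to a discrete $A$-gate and apply the generalized Tits deformation argument (Theorem \ref{tits_deformation_geck}) to conclude triviality for \emph{every} $A$-gate. Your semicontinuity step 2 is a legitimate variant of the paper's DVR-based dimension count in Theorem \ref{jac_dim_spec_eq_open}; note, however, that once step 1 is done carefully so that $\psi(A_\fp)=\prod_i\Mat_{n_i}(R_\fp)$ holds \emph{exactly} (both inclusions, as in Proposition \ref{surjective_morphism_open_gen}), the specialized kernel is nilpotent and Lemma \ref{algebra_split_nilp_kernel} gives $\Jac(A(\fp))=\Jac(A^K)(\fp)$ with the correct dimension immediately, so step 2 becomes redundant.
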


Next, we have a theorem explaining the role of $A$-gates in the definition of $\mrm{DecGen}(A)$ and showing that they behave as nice as possible with respect to trivial decomposition maps.

\begin{theorem} \label{one_trivial_all_triival}
Suppose that $R$ is noetherian and that both $A^K$ and $A(\fp)$ split. The following are equivalent:
\begin{enum_thm}
\item $\rd_A^{\fp,\sO}$ is trivial for some $A$-gate $\sO$ in $\fp$.
\item $\rd_A^{\fp,\sO}$ is trivial for any $A$-gate $\sO$ in $\fp$, i.e., $\fp \in \mrm{DecGen}(A)$.
\item $\dim_K \Jac(A^K) = \dim_{\rk(\fp)} \Jac(A(\fp))$, where $\Jac$ denotes the Jacobson radical.
\end{enum_thm}
\end{theorem}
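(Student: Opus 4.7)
The plan is to establish the three implications (ii)~$\Rightarrow$~(i)~$\Rightarrow$~(iii)~$\Rightarrow$~(ii). The first is immediate from Lemma~\ref{split_gates_exist}: since $A(\fp)$ splits, any valuation ring in $K$ dominating $R_\fp$ is an $A$-gate in $\fp$, so $A$-gates in $\fp$ exist.

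For (i)~$\Rightarrow$~(iii), suppose $\rd_A^{\fp,\sO}$ is trivial for some $A$-gate $\sO$. By definition it induces a dimension-preserving bijection $\sigma$ between isomorphism classes $(S_i)_{i\in I}$ of simple $A^K$-modules and $(T_j)_{j\in J}$ of simple $A(\fp)$-modules. Because both $A^K$ and $A(\fp)$ split, Wedderburn gives
\[
\dim_K A^K = \sum_{i\in I}(\dim S_i)^2 + \dim_K \Jac(A^K), \qquad \dim_{\rk(\fp)} A(\fp) = \sum_{j\in J}(\dim T_j)^2 + \dim_{\rk(\fp)} \Jac(A(\fp)).
\]
Since $A$ is finite free the left-hand sides agree, and since $\sigma$ is dimension-preserving the sums of squares agree; the Jacobson dimensions therefore coincide.

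For the central implication (iii)~$\Rightarrow$~(ii), Theorem~\ref{all_decs_are_discrete} lets me replace any $A$-gate by a discrete one without affecting the decomposition map, so I fix a discrete $A$-gate $(\sO,\fm)$. Set $J \dopgleich \Jac(A^K) \cap A^\sO$. Then $A^\sO/J$ embeds into $A^K/\Jac(A^K)$, which is $K$-free, so $A^\sO/J$ is $\sO$-torsion free and hence $\sO$-free over the DVR $\sO$; consequently $J$ is a free $\sO$-direct summand of $A^\sO$ of rank $\dim_K \Jac(A^K)$, and its image $\bar J = J/\fm J$ in $A^{\rk(\fm)} = A(\fp) \otimes_{\rk(\fp)} \rk(\fm)$ has $\rk(\fm)$-dimension $\dim_K \Jac(A^K)$. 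Being the image of a nilpotent ideal, $\bar J$ is nilpotent and therefore contained in $\Jac(A^{\rk(\fm)}) = \Jac(A(\fp)) \otimes_{\rk(\fp)} \rk(\fm)$, the equality on the right using that $A(\fp)$ splits. Hypothesis (iii) equates the $\rk(\fm)$-dimensions of these two sides, forcing $\bar J = \Jac(A^{\rk(\fm)})$. Consequently $B^\sO \dopgleich A^\sO/J$ is a finite free $\sO$-order with
\[
B^K = A^K/\Jac(A^K), \qquad B^{\rk(\fm)} = A^{\rk(\fm)}/\Jac(A^{\rk(\fm)}),
\]
both split semisimple of the same $\rk(\fm)$-rank. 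Since $J$ acts by zero on every simple $A^K$-module, any $\sO$-free $A^\sO$-form $\wt S_i$ of $S_i$ is already a $B^\sO$-module and an $\sO$-free $B^\sO$-form of $S_i$ viewed as a $B^K$-module, so $\rd_A^{\fp,\sO}$ coincides (under the natural identifications of simples) with the decomposition map of $B^\sO$. The claim then reduces to the classical fact that a finite free $\sO$-order over a DVR whose generic and special fibers are split semisimple of equal $\sO$-rank has trivial decomposition map; this is proved by lifting a complete set of primitive central idempotents from the split semisimple special fiber, possibly after replacing $\sO$ by its completion (which does not change the decomposition map by Proposition~\ref{dominating_gates}, the completion being a dominating local ring).

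The main obstacle is precisely the last step: identifying triviality of the decomposition map with an equality of Jacobson dimensions. The reduction to the semisimple quotient $B^\sO$ above exhibits this as a deformation-theoretic statement for split semisimple orders, and making it work cleanly in the non-normal setting is the substance of Sections~\ref{connection_with_jac} and \ref{jacobson_behavior}; the availability of discrete $A$-gates in arbitrary noetherian residue characteristic, supplied by Theorem~\ref{all_decs_are_discrete}, is what allows the classical DVR idempotent-lifting arguments to be applied without any normality hypothesis on $R$.
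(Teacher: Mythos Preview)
Your proof follows essentially the same route as the paper's: (ii)$\Rightarrow$(i) is existence of gates; (i)$\Rightarrow$(iii) is exactly the dimension count of Proposition~\ref{d_gen_jac_gen}; and for (iii)$\Rightarrow$(ii) you reduce to a discrete gate via Theorem~\ref{all_decs_are_discrete} and then pass to the quotient $B^{\sO}=A^{\sO}/J$, which is precisely the argument of Theorem~\ref{tits_deformation_geck}.

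There is one technical slip. In the last step you appeal to Proposition~\ref{dominating_gates} to replace $\sO$ by its completion, but that proposition is about local rings \emph{in $K$} dominating $\sO$, and the completion $\widehat{\sO}$ of a DVR is in general not a subring of $K$. So this citation is misplaced. The repair is easy and is exactly what the paper does: once you have reduced to the order $B^{\sO}$ with split semisimple special fibre $B^{\rk(\fm)}$, you invoke Tits's deformation theorem \cite[7.4.6]{GP-Coxeter-Hecke} directly over the DVR $\sO$---no completion or idempotent lifting is needed, and (as noted parenthetically in the proof of Theorem~\ref{tits_deformation_geck}) the normality hypothesis in Geck--Pfeiffer is only used for uniqueness of the decomposition map, not for the triviality statement itself. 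Your closing reference to Section~\ref{jacobson_behavior} is also off target: that section treats the \emph{generic} behaviour of the Jacobson radical across $\Spec(R)$, whereas everything needed for the equivalence at a fixed $\fp$ is already contained in Section~\ref{connection_with_jac}.
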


Finally, we have the openness result.

\begin{theorem} \label{dec_stratification}
Suppose that $R$ is noetherian and that $A(\fp)$ splits for all $\fp$. Then $\mrm{DecGen}(A)$ is an \textit{open} subset of $X \dopgleich \Spec(R)$ containing the generic point. Moreover, there are finitely many points $\xi_1,\ldots,\xi_n$ in $X$ such that 
\[
X = \bigcup_{i=1}^n \mrm{DecGen}(A|_{\xi_i}) \;.
\]
The sets $\mrm{DecGen}(A|_{\xi_i})$ are locally closed in $X$.  
\end{theorem}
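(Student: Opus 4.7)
The plan is to combine the characterization of triviality from Theorem \ref{one_trivial_all_triival} with upper semi-continuity of Jacobson radical dimension, and then stratify by noetherian induction. Since $A((0)) = A^K$ splits together with every other fiber, Theorem \ref{one_trivial_all_triival} applies uniformly and identifies
\[
\mrm{DecGen}(A) \;=\; \bigl\{\fp \in \Spec(R) \;:\; \dim_{\rk(\fp)} \Jac(A(\fp)) = \dim_K \Jac(A^K)\bigr\}.
\]
I would then invoke the upper semi-continuity of the function $\fp \mapsto \dim_{\rk(\fp)} \Jac(A(\fp))$ on $\Spec(R)$, to be developed in \S\ref{jacobson_behavior}. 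Upper semi-continuity forces the generic value $\dim_K \Jac(A^K)$ to be the minimum, so the equality locus above coincides with the open locus where this function is at most $\dim_K \Jac(A^K)$. This shows $\mrm{DecGen}(A)$ is open, and it obviously contains the generic point, giving the first assertion. Crucially, this conclusion applies to \emph{any} algebra satisfying the hypotheses of the theorem.

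For the stratification, I would run a noetherian induction on $X = \Spec(R)$. Set $W_0 := X$. Supposing by induction that a closed subset $W_k \subseteq X$ has been constructed, let $\xi_{k,1},\dots,\xi_{k,m_k}$ be the (finitely many) generic points of its irreducible components. For each $\xi = \xi_{k,j}$, the restricted algebra $A|_\xi$ is finite free over the noetherian integral domain $R/\xi$, and its fibers coincide with the fibers $A(\fp)$ for $\fp \supseteq \xi$, hence all split. Applying the openness conclusion just established to $A|_\xi$, the set $\mrm{DecGen}(A|_\xi)$ is open in $V(\xi)$ and contains $\xi$. Now define
\[
W_{k+1} \;:=\; \bigcup_{j=1}^{m_k} \bigl(V(\xi_{k,j}) \setminus \mrm{DecGen}(A|_{\xi_{k,j}})\bigr),
\]
a finite union of subsets closed in the closed subsets $V(\xi_{k,j})$ of $X$, hence closed in $X$. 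Since each $\xi_{k,j}$ is excluded from $W_{k+1}$, the containment $W_{k+1} \subsetneq W_k$ is strict. The noetherian hypothesis terminates this descending chain at $W_N = \emptyset$ after finitely many steps, and the assembled collection $\xi_1,\dots,\xi_n$ of all points used is finite. Every $\fp \in X$ lies in some $W_k \setminus W_{k+1}$, and is therefore in some $\mrm{DecGen}(A|_{\xi_{k,j}})$; this gives $X = \bigcup_i \mrm{DecGen}(A|_{\xi_i})$. Each $\mrm{DecGen}(A|_{\xi_i})$ is open in the closed subset $V(\xi_i)$ of $X$, hence locally closed in $X$.

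The main obstacle is the upper semi-continuity of $\fp \mapsto \dim_{\rk(\fp)} \Jac(A(\fp))$, since the Jacobson radical does not in general commute with base change and so is not obviously the fiberwise dimension of a coherent sheaf. The assumption that \emph{all} fibers split is essential here: it is precisely what allows one to describe $\Jac(A(\fp))$ coherently through idempotent lifts over $A$-gates, which is the technical content of \S\ref{split_locus} and \S\ref{jacobson_behavior}. Once semi-continuity is in hand, the argument above is largely formal; without it, one would instead have to prove openness via the ind-constructibility machinery of \S\ref{generic_representation_theory} by showing that $\mrm{DecGen}(A)$ is constructible and closed under generalization.
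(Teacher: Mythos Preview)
Your proposal is correct and close in spirit to the paper's argument. Both routes start from the identification $\mrm{DecGen}(A)=\ul{\mrm{JacDim}}\mrm{Gen}(A)$ (Corollary \ref{decgen_cap_spl_eq_jacdimgen_cap_spl}) and run the same noetherian induction for the stratification (your induction is exactly Lemma \ref{stratification_lemma}). The difference lies in how openness is extracted. You package it as upper semi-continuity of $\fp\mapsto\dim_{\rk(\fp)}\Jac(A(\fp))$; the paper instead takes what you call the alternative route: \S\ref{jacobson_behavior} only proves \emph{genericity} of $\ul{\mrm{JacDim}}$, not semi-continuity, and \S\ref{generic_representation_theory} then supplies the two missing pieces --- restriction stability $\Rightarrow$ ind-constructibility (Proposition \ref{ind_constructible_property}) and the monotonicity inequality $\dim_{\rk(\fp)}\Jac(A(\fp))\geq\dim_K\Jac(A^K)$ (Theorem \ref{jac_dim_spec_geq_general}) $\Rightarrow$ stability under generization --- and concludes via Grothendieck's criterion (Theorem \ref{ind_constructible_stable_open}). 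These are two presentations of the same content: your semi-continuity statement is precisely the conjunction of genericity and monotonicity applied recursively to all restrictions $A|_\fp$, which is what Lemma \ref{monotone_invariant} formalizes. One technical correction: the monotonicity inequality is obtained not by idempotent lifting but by passing to a discrete $A$-gate $\sO$, contracting $\Jac(A^K)$ to $A^\sO$, and using that torsion-free modules over a discrete valuation ring are free so that dimensions match after reduction; this is where noetherianity (via Theorem \ref{discrete_exists}) and the splitting of $A(\fp)$ (so that $\Jac$ commutes with the field extension $\rk(\fp)\hookrightarrow\rk(\fm)$) enter.
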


The theorem shows that we can stratify $\Spec(R)$ into finitely many strata on which decomposition maps are trivial and so the fibers of $A$ are ``essentially the same''. 
Moreover, the complement $\mrm{DecEx}(A)$ of $\mrm{DecGen}(A)$ in $\Spec(R)$ is closed and so it is the zero locus of an ideal $\fd_A$ in $\Spec(R)$. We call this ideal the \word{decomposition discriminant} of $A$. The obvious question is: can we explicitly compute $\fd_A$? We cannot give a general answer so far. What we can prove is that in the setting of Schur elements this discriminant is indeed given by the Schur elements of the simple modules—as one might expect.

\begin{proposition} \label{tits_def_theorem_modgen}
Suppose that $R$ is noetherian and normal, that $A$ has split fibers and is symmetric, and that $A^K$ is semisimple. Let $(c_i)_{i \in I}$ be the Schur elements of a system $(S_i)_{i \in I}$ of representatives of the simple $A^K$-modules (see \cite[\S7]{GP-Coxeter-Hecke}). Then
\[
\mrm{DecEx}(A) = \bigcup_{i \in I} \rV(c_i) = \rV(\prod_{i \in I} c_i ).
\] 
In particular, the decomposition discriminant $\fd_A$ is equal to the ideal generated by the Schur elements $c_i$.
\end{proposition}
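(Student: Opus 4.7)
The plan is to combine Theorem~\ref{one_trivial_all_triival} with the classical Schur-element criterion for semisimplicity of split symmetric algebras. The hypotheses of the proposition imply those of Theorem~\ref{one_trivial_all_triival} at every prime $\fp \in \Spec(R)$: the ring $R$ is noetherian, all fibers $A(\fp)$ split, and $A^K$ is split semisimple, hence split with $\Jac(A^K) = 0$. Therefore
\[
\fp \in \mrm{DecGen}(A) \iff \dim_{\rk(\fp)} \Jac(A(\fp)) = 0 \iff A(\fp)\tn{ is semisimple,}
\]
and consequently $\mrm{DecEx}(A)$ is precisely the locus in $\Spec(R)$ where the fiber of $A$ fails to be semisimple.

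Next I would invoke the theory of Schur elements for split semisimple symmetric algebras. Since $A$ is $R$-symmetric with symmetrizing trace $t\colon A \to R$ and $A^K$ is split semisimple, the Schur elements $c_i \in K^\times$ of the simple $A^K$-modules $(S_i)_{i \in I}$ are well-defined, and $I$ is finite. A standard integrality argument (see \cite[\S7]{GP-Coxeter-Hecke}) shows that each $c_i$ is integral over $R$; together with the normality of $R$ this forces $c_i \in R$. The classical criterion, essentially a form of Tits's deformation theorem in the symmetric setting (\cite[Theorem~7.4.7]{GP-Coxeter-Hecke}), then states that under exactly our hypotheses, $A(\fp)$ is semisimple if and only if the image of $c_i$ in $\rk(\fp)$ is nonzero for every $i \in I$, equivalently $c_i \notin \fp$ for all $i$.

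Putting the two reductions together yields
\[
\mrm{DecEx}(A) \;=\; \{\fp \in \Spec(R) \mid \exists\, i \in I,\ c_i \in \fp\} \;=\; \bigcup_{i \in I} \rV(c_i) \;=\; \rV\!\bigl(\textstyle\prod_{i \in I} c_i\bigr),
\]
where the last identity uses finiteness of $I$ together with the fact that a prime contains a finite product iff it contains one of the factors. The decomposition discriminant $\fd_A$, being a radical ideal with this zero locus, then has the same radical as the ideal generated by the $c_i$, giving the final ``in particular'' statement. The only real obstacle is ensuring that the classical Schur-element criterion applies in the precise pointwise generality used here; this is automatic from the hypothesis that $A$ is $R$-symmetric (so the symmetrizing form descends to a symmetrizing form on $A(\fp)$ whose Schur elements at each $i$ coincide with the reduction of $c_i$), so no new argument beyond the cited classical theory is required.
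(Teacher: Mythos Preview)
Your proof is correct and follows essentially the same route as the paper: both arguments identify $\mrm{DecGen}(A)$ with the semisimplicity locus via the Jacobson-radical criterion (you cite Theorem~\ref{one_trivial_all_triival}, the paper the equivalent Corollary~\ref{decgen_cap_spl_eq_jacdimgen_cap_spl}), and then use the classical Schur-element criterion together with normality of $R$ to characterise that locus by the vanishing of the~$c_i$. The only cosmetic difference is that the paper treats the two directions of the Schur-element criterion separately (invoking \cite[7.4.6]{GP-Coxeter-Hecke} for one and \cite[7.2.6]{GP-Coxeter-Hecke} for the other), whereas you package them into a single appeal to \cite[7.4.7]{GP-Coxeter-Hecke}.
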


We note that even though the Schur elements depend on the symmetric structure of $A$ the discriminant $\fd_A$ does not. The proposition above shows that we can view $\fd_A$ as a generalization of Schur elements of simple modules for symmetric semisimple algebras.

\subsection{Examples} \label{examples}

In the following we list some classical and recent examples which fit into our setting.

\begin{example}
We start with the classical setting in modular representation theory of finite groups. Let $G$ be a finite group. Let $\sO$ be the ring of integers in a number field $K$ which is sufficiently large for $G$, and let $A = \sO G$ be the group algebra. The specialization of $A$ in the generic point $(0)$ of $\Spec(\sO)$ is the characteristic zero group algebra $KG$ and the specialization of $A$ in a prime ideal $\fp \neq (0)$ of $\sO$ is the positive characteristic group algebra $\rk(\fp)G$ of $G$ over a finite field of characteristic $p$, where $(p) = \bbZ \cap \fp$. We note that it is a classical fact that $A$ has split fibers. The decomposition maps $\rd_A^\fp$ for $\fp \in \Spec(\sO)$ are precisely those considered in modular representation theory of finite groups and it is a classical fact due to Brauer that $\rd_A^\fp$ is non-trivial if and only if $\fp$ lies above a prime number $p$ dividing the order of $G$, i.e., $\mrm{DecEx}(A) = \bigcup_{p \mid \# G} \rV(p)$. This is exactly the statement of Proposition \ref{tits_def_theorem_modgen} which can be applied here.
\end{example}

\begin{example}
 Let $n \in \bbN$. For any $\delta \in \bbC$ the Brauer algebra $B_n(\delta)$ is a $\bbC$-algebra with basis certain diagrams whose multiplication involves the parameter $\delta$ (see \cite{Brauer:1937aa}). We can combine all these algebras by taking an indeterminate  $\boldsymbol{\delta}$ over $\mathbb{C}$ and considering the \textit{generic Brauer algebra} $\mathbf{B}_n^0 \dopgleich B_n(\boldsymbol{\delta})$, i.e., the Brauer algebra over $\mathbb{C} \lbrack \boldsymbol{\delta} \rbrack$ with parameter $\boldsymbol{\delta}$. The specialization of $\mathbf{B}_n^0$ in the maximal ideal $(\boldsymbol{\delta}-\delta)$ of $\mathbb{C} \lbrack \boldsymbol{\delta} \rbrack$ corresponding to $\delta \in \mathbb{C}$ is precisely the Brauer algebra $B_n(\delta)$. Note that the specialization of $\mathbf{B}_n^0$ in the generic point $(0)$ is the Brauer algebra over the rational function field $\bbC (\boldsymbol{\delta})$. Brauer algebras are examples of cellular algebras in the sense of Graham and Lehrer \cite{Graham:1996aa}, and these split over any field. Hence, $\mathbf{B}_n^0$ has split fibers. Our result implies that the decomposition maps $\mathbf{B}_n^0$ are generically trivial, hence trivial for all but finitely many $\fp \in \Spec(\mathbb{C} \lbrack \boldsymbol{\delta} \rbrack)$. Theorem \ref{one_trivial_all_triival} implies in particular that $B_n(\delta)$ is semisimple for all but finitely many $\delta \in \bbC$. Due to its generality our genericity result does not explicitly give us the $\delta$ for which $B_n(\delta)$ is not semisimple. These have been explicitly determined by Wenzl \cite{Wenzl:1988aa} and Rui \cite{Rui:2005aa}.
\end{example}

\begin{example}
One cannot expect a lot of geometry from the one-dimensional base ring $\bbC \lbrack \boldsymbol{\delta} \rbrack$ in the above example. We can, however, also study modular Brauer algebras by considering the generic Brauer algebra $\mathbf{B}_n \dopgleich B_n(\boldsymbol{\delta})$ over $\bbZ \lbrack \boldsymbol{\delta} \rbrack$. This has now a two-dimensional base ring and here we can expect some geometry (see \cite{Mumford:1999aa} for a nice illustration of the spectrum of $\mathbb{Z}\lbrack \boldsymbol{\delta} \rbrack$). Again note that $\mathbf{B}_n$ has split fibers. Specializing in the prime ideal $(\boldsymbol{\delta} - \delta)$ for $\delta \in \bbZ$ yields the characteristic zero Brauer algebra over $\bbQ$ in $\delta$ as above. Specializing in the prime ideal $(p)$ for a prime number $p$ yields the (generic) modular Brauer algebra over the rational function field $\bbF_p(\boldsymbol{\delta})$. Specializing in the maximal ideal $(p,\delta)$ gives the Brauer algebra for the image of $\delta$ in $\bbF_p$. We see here already that the ability to specialize in non-closed points of the spectrum of the base ring  is useful. Our result implies that decomposition maps of $\mathbf{B}_n$ are trivial on an open subset of $\bbA_\bbZ^2$. An explicit description of this subset follows from the semisimplicity criterion by Rui \cite{Rui:2005aa}.
\end{example}

\begin{example}
Let $W$ be an irreducible finite reflection group acting on a finite-dimensional vector space over a field $k$ of characteristic zero. Let $c:\mrm{Ref}(W) \rarr k$ be a function from the set $\mrm{Ref}(W)$ of reflections in $W$ to $k$ which is invariant under the action of $W$ so that $c$ descends to a function $\mrm{Ref}(W)/W \rarr k$. To any such function the restricted rational Cherednik algebra $\ol{\rH}_c$ is defined (see \cite{Gordon:2003aa}). This is a finite-dimensional $k$-algebra obtained as a quotient of the correspond rational Cherednik algebra at $t=0$ introduced by Etingof and Ginzburg \cite{EG-Symplectic-reflection-algebras}. One can obtain them as specializations of the generic restricted rational Cherednik algebra $\ol{\bH}$ in closed points of $\bbA_k^{n}$, where $n \dopgleich \# (\mrm{Ref}(W)/W)$. This algebra is free and of finite dimension over $k \lbrack \mathbf{c} \rbrack$, where $\mathbf{c}$ is a family of $n$ algebraically independent elements over $k$ (see \cite{Bonnafe.C;Rouquier.R13Cellules-de-Calogero} and \cite{Thiel:2015aa}). Moreover, it has split fibers (see loc. cit.). Our result implies that the decomposition maps of $\ol{\bH}$ are generically trivial, so that the number and dimensions of the simple modules of $\ol{\rH}_c$ are constant for all $c$ in an open subset of $\bbA_k^n$. As explained in the introduction, this case is in general not covered by Proposition \ref{tits_def_theorem_modgen} as the generic fiber of $\ol{\bH}$ is semisimple if and only if $W$ is a cyclic group. We do not have an explicit description of the decomposition discriminant $\fd_{\ol{\bH}}$ for general $W$.
\end{example}

\begin{example}  \label{Cherednik_t0_example}
The actual infinite-dimensional rational Cherednik algebra $\rH_c$ at $t=0$ attached to a reflection group $W$ as above is finite and free over the central subalgebra $\mathcal{Z} \dopgleich k \lbrack V \rbrack^W \otimes_k k \lbrack V^* \rbrack^W$. Again this algebra can be viewed as a specialization of the generic rational Cherednik algebra $\bH$ at $t=0$ which is a finite free $\boldsymbol{\mathcal{Z}}$-algebra with $\boldsymbol{\mathcal{Z}} \dopgleich k \lbrack \mathbf{c} \rbrack \otimes_k \mathcal{Z}$ (see \cite{Bonnafe.C;Rouquier.R13Cellules-de-Calogero}). Note that the specialization of $\bH$ in the origin of $\boldsymbol{\mathcal{Z}}$ yields the generic fiber of the generic restricted rational Cherednik algebra $\ol{\bH}$ considered above. The generic fiber of $\bH$ is not split (see \cite{Bonnafe.C;Rouquier.R13Cellules-de-Calogero}) and so we cannot apply our main result to $\bH$. In their construction of Calogero–Moser cells Bonnafé and Rouquier \cite{Bonnafe.C;Rouquier.R13Cellules-de-Calogero} consider an extension of $\bH$. Let $\bM$ be the Galois closure of the (separable) field extension $\mrm{Frac}(\boldsymbol{\mathcal{Z}}) \subs \mrm{Frac}(\rZ(\bH))$ and let $\bR$ be the integral closure of $\boldsymbol{\mathcal{Z}}$ in $\bM$. Then $\wt{\bH} \dopgleich \bH^\bR$ is a finite free $\bR$-algebra with split generic fiber. As $\boldsymbol{\mathcal{Z}}$ is a polynomial ring and thus a Japanese ring, the ring $\bR$ is noetherian and so our result implies that the decomposition maps of $\wt{\bH}$ are generically trivial. Decomposition maps of $\wt{\bH}$ play an important role in \cite{Bonnafe.C;Rouquier.R13Cellules-de-Calogero} in the context of Calogero–Moser families and Calogero–Moser cells (a conjectural extension of Kazhdan–Lusztig cells to complex reflection groups).
\end{example}

\begin{example}
Let $G$ be a connected reductive algebraic group over an algebraically closed field $k$ of characteristic $p$ and let $\fg$ be the Lie algebra of $G$. As in \cite{Brown-Gordon-Ramifications-2001} we assume that the derived group of $G$ is simply-connected, that $p$ is an odd good prime for $G$, and that $\fg$ has a non-degenerate $G$-invariant bilinear form. Let $U \dopgleich U(\fg)$ be the enveloping algebra of $\fg$ and let $\mathcal{Z} \dopgleich k \lbrack x^p - x^{\lbrack p \rbrack}  \mid x \in \fg \rbrack \subs \rZ(U)$ be its $p$-center. Then $U$ is a finite free $\mathcal{Z}$-module by the Poincaré–Birkhoff–Witt theorem. As in Example \ref{Cherednik_t0_example} the generic fiber of $U$ over $\mathcal{Z}$ is not split and one can split it over an extension $\widetilde{\mathcal{Z}}$ of $\mathcal{Z}$. Our result is then applicable to the extension $\widetilde{U}$ of $U$ to $\widetilde{\mathcal{Z}}$. Note that the fibers of $\widetilde{U}$ in closed points of $\widetilde{\mathcal{Z}}$ are isomorphic to the fibers of $U$ in closed points of $\mathcal{Z}$ so that even though we are considering an extension of $U$ we still get information about $U$ itself.
\end{example}

Similar as Brauer algebras and rational Cherednik algebras one can also consider Hecke algebras (for complex reflection groups in general) over appropriate base rings which admit interesting specializations. We refer to \cite{GP-Coxeter-Hecke},  \cite{Geck.M;Jacon.N11Representations-of-H}, and \cite{BMR-Complex-Reflection}. As mentioned in the introduction, Geck \cite{Geck.M98Representations-of-H} used generic triviality of decomposition maps in the context of James's conjecture.

\section{Connection with the Jacobson radical} \label{connection_with_jac}

Working directly with decomposition maps is a difficult problem. Our strategy—based on arguments by Geck \cite{Geck.M98Representations-of-H} in a one-dimensional setting—is to use a connection to the behavior of the Jacobson radical under specialization. The starting point is the following proposition. We denote by $\Jac(A)$ the Jacobson radical of a ring $A$. Furthermore, $\fp$ denotes a non-zero prime of $R$.

\subsection{Implication of a decomposition map being trivial}

\begin{proposition} \label{d_gen_jac_gen}
Suppose that $A^K$ and $A(\fp)$ split. If $\rd_A^{\fp,\sO}$ is trivial for an $A$-gate $\sO$ in $\fp$, then $\dim_K \Jac(A^K) = \dim_{\rk(\fp)} \Jac(A(\fp))$.
\end{proposition}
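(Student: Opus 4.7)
The plan is to reduce the statement to a comparison of the Wedderburn decompositions of the semisimple quotients $A^K/\Jac(A^K)$ and $A(\fp)/\Jac(A(\fp))$. Since $A$ is finite free over $R$ of some rank $n$, both $A^K$ and $A(\fp)$ have dimension $n$ over their respective base fields, and since both algebras split, the Wedderburn--Artin theorem gives
\[
\dim_K \Jac(A^K) = n - \sum_{i \in I} d_i^2, \qquad \dim_{\rk(\fp)} \Jac(A(\fp)) = n - \sum_{j \in J} e_j^2,
\]
where the $d_i$ are the $K$-dimensions of a system $(S_i)_{i \in I}$ of simple $A^K$-modules and the $e_j$ are the $\rk(\fp)$-dimensions of a system $(T_j)_{j \in J}$ of simple $A(\fp)$-modules. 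Thus it suffices to prove that $\sum_i d_i^2 = \sum_j e_j^2$.

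To obtain this, I would use triviality of $\rd_A^{\fp,\sO}$ to produce a dimension-preserving bijection between $I$ and $J$. By hypothesis there is a bijection $\sigma:I \to J$ such that $\rd_A^{\fp,\sO}([S_i]) = [T_{\sigma(i)}]$ for every $i \in I$. Pick any $\sO$-free $A^\sO$-form $\wt{S}_i$ of $S_i$, and let $\fm$ denote the maximal ideal of $\sO$. The defining relation of the decomposition map (Corollary~\ref{dec_morphisms_exist}) says
\[
\gamma_A^{\fp,\fm}([T_{\sigma(i)}]) = [\wt{S}_i/\fm \wt{S}_i]
\]
in $\rG_0(A^\sO(\fm))$. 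The left-hand side is the class of $T_{\sigma(i)}^{\rk(\fm)}$, which is \emph{simple} because $A(\fp)$ splits (first equivalent condition in the list after Lemma~\ref{dominating_gates}), so that the equality of classes is in fact an isomorphism of $A^\sO(\fm)$-modules $\wt{S}_i/\fm\wt{S}_i \cong T_{\sigma(i)}^{\rk(\fm)}$. Comparing $\rk(\fm)$-dimensions and using $\sO$-freeness of $\wt{S}_i$ (so that $\dim_{\rk(\fm)} \wt{S}_i/\fm\wt{S}_i$ equals the $\sO$-rank of $\wt{S}_i$, which equals $\dim_K S_i$) yields $d_i = e_{\sigma(i)}$.

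Summing the squares of these equalities gives $\sum_{i \in I} d_i^2 = \sum_{j \in J} e_j^2$, from which the desired equality $\dim_K \Jac(A^K) = \dim_{\rk(\fp)} \Jac(A(\fp))$ follows at once. There is no real obstacle: the whole argument is essentially a matter of bookkeeping of dimensions, with the only non-formal input being the two standard facts that finite-freeness of $A$ makes $\dim A^K = \dim A(\fp)$ and that splitting of $A(\fp)$ keeps simple $A(\fp)$-modules simple after extending scalars to $\rk(\fm)$. The mild subtlety is to convert the equality of classes in the Grothendieck group into an actual isomorphism of modules before comparing dimensions; this is legitimate precisely because $T_{\sigma(i)}^{\rk(\fm)}$ is simple and $\wt{S}_i/\fm\wt{S}_i$ is a module whose class in $\rG_0(A^\sO(\fm))$ coincides with $[T_{\sigma(i)}^{\rk(\fm)}]$, forcing it to be isomorphic to $T_{\sigma(i)}^{\rk(\fm)}$.
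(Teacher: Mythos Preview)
Your proof is correct and follows essentially the same approach as the paper: both compare the Wedderburn decompositions of $A^K/\Jac(A^K)$ and $A(\fp)/\Jac(A(\fp))$ via the formula $\dim A = \dim\Jac(A) + \sum (\dim S_i)^2$ for split algebras. The paper simply asserts that $\rd_A^{\fp,\sO}$ ``preserves dimensions by construction'', whereas you unpack this by exhibiting the isomorphism $\wt{S}_i/\fm\wt{S}_i \cong T_{\sigma(i)}^{\rk(\fm)}$; note that even this step is slightly more than needed, since dimension is additive on $\rG_0$ and is preserved by $\gamma_A^{\fp,\fm}$, so the equality $d_i = e_{\sigma(i)}$ already follows from $\gamma_A^{\fp,\fm}([T_{\sigma(i)}]) = [\wt{S}_i/\fm\wt{S}_i]$ without invoking simplicity of $T_{\sigma(i)}^{\rk(\fm)}$.
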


\begin{proof}
Let $(S_i)_{i \in I}$ be a system of representatives of the isomorphism classes of simple $A^K$-modules. Then by assumption $(\rd_A^{\fp,\sO}(\lbrack S_i \rbrack))_{i \in I}$ is a system of representatives of the isomorphism classes of simple $A(\fp)$-modules. Note that $\rd_A^{\fp,\sO}$ preserves dimensions by construction. Since both $A^K$ and $A(\fp)$ split, we have by \cite[7.8]{Lam-Noncommutative} the equality
\begin{align*}
\dim_K \Jac(A^K) + \sum_{i \in I} (\dim_K S_i)^2 & = \dim_K A^K = \dim_{\rk(\fp)} A(\fp) \\ & = \dim_{\rk(\fp)} \Jac(A(\fp)) + \sum_{i \in I} ( \dim_{\rk(\fp)} \rd_A^{\fp,\sO}(\lbrack S_i \rbrack)) ^2 \\ & =  \dim_{\rk(\fp)} \Jac(A(\fp)) + \sum_{i \in I} ( \dim_{K} S_i ) ^2   
\end{align*}
and therefore 
$
\dim_K \Jac(A^K) = \dim_{\rk(\fp)} \Jac(A(\fp))
$
as claimed.
\end{proof}

Note that the proposition shows that triviality of the decomposition map for some $A$-gate implies a statement not involving the $A$-gate anymore. Also note that it implies that if $A^K$ splits, then 
\begin{equation} \label{decgen_spl_subs_jacdimgen_spl}
\mrm{DecGen}(A) \cap \mrm{SplGen}(A) \subs \ul{\mrm{JacDim}}\mrm{Gen}(A) \cap \mrm{SplGen}(A)  \;,
\end{equation}
where
\[
\ul{\mrm{JacDim}}\mrm{Gen}(A)  \dopgleich \lbrace \fp \in \Spec(R) \mid \dim_K \Jac(A^K) = \dim_{\rk(\fp)} \Jac(A(\fp)) \rbrace 
\]
and
\[
\mrm{SplGen}(A) \dopgleich \lbrace \fp \in \Spec(R) \mid A(\fp) \tn{ splits} \rbrace \;.
\]
We will see in \S\ref{generic_representation_theory} why we used an underline in the notation. If the converse inclusion of (\ref{decgen_spl_subs_jacdimgen_spl}) would hold and we could show that $\ul{\mrm{JacDim}}\mrm{Gen}(A) \cap \mrm{SplGen}(A)$ is a neighborhood of the generic point, then also $\mrm{DecGen}(A)$ would be a neighborhood of the generic point so that decomposition maps would be generically trivial. Our aim in this paragraph is to show that the converse inclusion indeed holds—provided that $R$ is noetherian. In the following paragraphs we then show the desired geometric properties. 

\subsection{Scalar extension of submodules} \label{submodule_stuff}

To understand the set $\ul{\mrm{JacDim}}\mrm{Gen}(A)$ we need a way to compare the Jacobson radical of the generic fiber with the Jacobson radical of a specialization. To this end, we need some results about the behavior of submodules under scalar extension. If $\theta:R \rarr S$ is a morphism into a commutative ring $S$, we set $A^S \dopgleich \theta^*A = S \otimes_R A$. The natural ring morphism $\theta_A:A \rarr A^S$, $a \mapsto 1 \otimes a$, induces the scalar extension functor $\theta_A^*:\cat{Mod}{A} \rarr \cat{Mod}{A^S}$. If $V$ is an $A$-module, then $\theta_A^* V = A^S \otimes_A V$ is an $A^S$-module whose underlying $S$-module structure is the one of $V^S$. Furthermore, we have a natural map $\theta_V:V \rarr V^S$, $v \mapsto 1 \otimes v$, and this map allows us to set up a relation between $A$-submodules of $V$ and $A^S$-submodules of $V^S$. Namely, if $U \leq V$, we set $\mrm{ext}_V^S(U) \dopgleich \langle \theta_V(U) \rangle_{A^S} \leq V^S$ and $\mrm{con}_V^S(W) \dopgleich \theta_V^{-1}(W) \leq V$ for $W \leq V^S$. 
If $\theta_V$ is injective, we can identify $V$ with a subset of $V^S$ and then we have $\mrm{con}_V^S(W) = V \cap W$. This holds for example if $\theta$ is the localization morphism for a multiplicatively closed subset $\Sigma \subs R$ and $V$ is $\Sigma$-torsion free. In this case \cite[II, \S2.2, Proposition 4]{Bou-Commutative-Algebra-1-7} implies moreover that $\mrm{ext}_V^S \circ \mrm{con}_V^S(W) = W$ for any $W \leq V^S$ and that the image of $\mrm{con}_V^S$ consists of all submodules $U \leq V$ such that the quotient $V/U$ is $\Sigma$-torsion free. 

Using right-exactness of $\theta_A^*$ it is easy to see that if $f:V \rarr W$ is a surjective morphism of $A$-modules, then $\Im(\theta_A^*f) = \theta_A^*W$ and $\Ker(\theta_A^*f) = \mrm{ext}_V^S(\Ker(f))$. In particular, if $U$ is a submodule of $V$, then $\theta_A^*(V/U) \cong \theta_A^*V/\mrm{ext}_V^S(U)$ canonically. We will use this several times later.

\subsection{A generalization of Tits's deformation theorem} \label{tits_section}

If $J$ is a $K$-subspace of $A^K$, then (due to the canonical $A$-form $A$ of $A^K$) we can construct for any $\fp \in \Spec(R)$ canonically a subvector space $J(\fp)$ of $A(\fp)$ derived from $J$, namely
\[
J(\fp) \dopgleich \mrm{ext}_{A_\fp}^{\rk(\fp)} \circ \mrm{con}_{A_\fp}^K(J) = \rk(\fp) \otimes_{R_\fp} ( A_\fp \cap J ) \;.
\] 
The following lemma sets up a first relation between $\Jac(A^K)$ and $\Jac(A(\fp))$.

\begin{lemma} \label{jac_spec_dim_lemma}
For any $\fp \in \Spec(R)$ the relation 
$
 \Jac(A(\fp)) \sups \Jac(A^K)(\fp) 
$
 holds.
\end{lemma}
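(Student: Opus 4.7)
The plan is to reduce the inclusion $\Jac(A^K)(\fp) \subseteq \Jac(A(\fp))$ to the well-known fact that any nilpotent two-sided ideal of a ring is contained in its Jacobson radical. Write $J \dopgleich \Jac(A^K)$. Since $A^K$ is finite-dimensional over $K$, $J$ is a nilpotent two-sided ideal, say $J^n = 0$ for some $n \geq 1$.

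First I would argue that the contraction $I \dopgleich \mrm{con}_{A_\fp}^K(J) = A_\fp \cap J$ is a nilpotent two-sided ideal of $A_\fp$. It is a two-sided ideal because $A_\fp$ is a subring of $A^K$ and $J$ is a two-sided ideal of $A^K$: for $a \in A_\fp$ and $x \in A_\fp \cap J$, the product $ax$ lies in $A_\fp$ (subring) and in $J$ (ideal), hence in $I$; similarly for right multiplication. Nilpotency is immediate from $I^n \subseteq A_\fp \cap J^n = 0$.

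Next I would pass to the fiber $A(\fp) = \rk(\fp) \otimes_{R_\fp} A_\fp$. By definition, $J(\fp) = \mrm{ext}_{A_\fp}^{\rk(\fp)}(I) = \rk(\fp) \otimes_{R_\fp} I$, realized as the $\rk(\fp)$-subspace of $A(\fp)$ generated by the image of $I$ under the canonical morphism $\theta_A^\fp : A_\fp \to A(\fp)$. Since $I$ is a two-sided ideal of $A_\fp$, its image under $\theta_A^\fp$ is closed under multiplication by elements of $\theta_A^\fp(A_\fp)$; scalar extension to $\rk(\fp)$ then gives a two-sided ideal of $A(\fp)$. Nilpotency is preserved under this construction as well: since $I^n = 0$, every length-$n$ product of elements in the image of $I$ vanishes, so $J(\fp)^n = 0$.

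Finally, any nilpotent two-sided ideal $N$ of an arbitrary ring $B$ satisfies $N \subseteq \Jac(B)$, because for $x \in N$ and $a \in B$ the element $ax \in N$ is nilpotent, so $1 - ax$ is invertible via the geometric series, which is precisely the criterion for membership in the Jacobson radical. Applied to $N = J(\fp) \triangleleft A(\fp)$, this yields $\Jac(A^K)(\fp) = J(\fp) \subseteq \Jac(A(\fp))$, as required. There is no real obstacle here beyond being careful that the scalar-extension formalism of \S\ref{submodule_stuff} does produce the image of $I$ as a genuine two-sided ideal of $A(\fp)$; this is automatic because $I$ is already a two-sided $A_\fp$-submodule before tensoring.
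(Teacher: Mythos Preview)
Your proof is correct and follows essentially the same route as the paper: both argue that $\Jac(A^K)$ is nilpotent (finite-dimensional over a field), that nilpotency survives the contraction to $A_\fp$ and the extension to $\rk(\fp)$, and then invoke the fact that nilpotent ideals lie in the Jacobson radical. You simply spell out in more detail what the paper's four-sentence proof leaves implicit.
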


\begin{proof}
Since $A^K$ is artinian, its Jacobson radical is nilpotent. Hence, $\mrm{con}_{A_\fp}^K(\Jac(A^K)) = A_\fp \cap \Jac(A^K)$ is nilpotent, and so $\Jac(A^K)(\fp)$ is nilpotent. This implies that $\Jac(A^K)(\fp)$ is contained in $\Jac(A(\fp))$.
\end{proof}

The following theorem is a generalization of Tits's deformation theorem as in \cite[7.4.6]{GP-Coxeter-Hecke} to a non-semisimple situation. For this argument to work we really need \textit{discrete} valuation rings.

\begin{theorem} \label{tits_deformation_geck}
Suppose that $A^K$ and $A(\fp)$ split and let $\sO$ be a \textit{discrete} $A$-gate in $\fp$. If $\dim_{\rk(\fp)} \Jac(A(\fp)) = \dim_K \Jac(A^K)$, then $\rd_A^{\fp,\sO}$ is trivial.
\end{theorem}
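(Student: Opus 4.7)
My plan is to reduce to the classical Tits deformation theorem \cite[7.4.6]{GP-Coxeter-Hecke} by deforming the semisimple quotient $A^K/\Jac(A^K)$ flatly over $\sO$. Set $\tilde A \dopgleich A^\sO$ and $\tilde J \dopgleich \tilde A \cap \Jac(A^K)$. Because $\sO$ is a \emph{discrete} valuation ring, $\tilde J$ is $\sO$-free as a finitely generated torsion-free module over a DVR; likewise $\tilde A/\tilde J$ is torsion-free (if $s\in\sO\setminus\{0\}$ and $sx\in\tilde J$ then $x=(sx)/s\in\Jac(A^K)\cap\tilde A=\tilde J$) and finitely generated, hence $\sO$-free. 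The extension-contraction facts recalled in \S\ref{submodule_stuff} give $K\otimes_\sO \tilde J=\Jac(A^K)$, so $\mrm{rank}_\sO\tilde J=\dim_K\Jac(A^K)$.

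The second step is to identify the image $\bar J$ of $\tilde J$ in $\bar A\dopgleich A^\sO(\fm)$ with $\Jac(\bar A)$. The short exact sequence $0\to\tilde J\to\tilde A\to\tilde A/\tilde J\to 0$ of $\sO$-free modules stays exact under $-\otimes_\sO\rk(\fm)$, so $\dim_{\rk(\fm)}\bar J=\dim_K\Jac(A^K)$. On the one hand $\bar J$ is nilpotent, so $\bar J\subseteq\Jac(\bar A)$. On the other hand, because $A(\fp)$ splits, $A(\fp)/\Jac(A(\fp))$ is a product of matrix algebras over $\rk(\fp)$ and remains split semisimple after base change to $\rk(\fm)$, whence $\Jac(\bar A)=\rk(\fm)\otimes_{\rk(\fp)}\Jac(A(\fp))$ and $\dim_{\rk(\fm)}\Jac(\bar A)=\dim_{\rk(\fp)}\Jac(A(\fp))$. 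The dimension hypothesis then forces equality $\bar J=\Jac(\bar A)$.

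Now I would consider the $\sO$-free algebra $B\dopgleich\tilde A/\tilde J$. Its generic fiber $B^K=A^K/\Jac(A^K)$ is split semisimple, and its fiber at $\fm$ is $\bar A/\Jac(\bar A)=B^{\rk(\fm)}$, which is likewise split semisimple. For each simple $A^K$-module $S_i$ with chosen $\sO$-free $A^\sO$-form $\tilde S_i$, the inclusion $\tilde S_i\hookrightarrow S_i$ combined with $\Jac(A^K)S_i=0$ gives $\tilde J\tilde S_i=0$, so $\tilde S_i$ is naturally an $\sO$-free $B$-form of the simple $B^K$-module $S_i$. The classical Tits deformation theorem, applied over the (normal) DVR $\sO$ to the split-semisimple algebra $B$, then yields that the reductions $\bar S_i\dopgleich\tilde S_i/\fm\tilde S_i$ form a complete irredundant system of representatives of the simple $B^{\rk(\fm)}$-modules.

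To finish, simple $\bar A$-modules factor through $\bar A/\Jac(\bar A)=B^{\rk(\fm)}$, so the $\bar S_i$ also classify the simple $\bar A$-modules. Because $A(\fp)$ splits, scalar extension $\rk(\fp)\hookrightarrow\rk(\fm)$ induces a bijection between simple $A(\fp)$-modules and simple $\bar A$-modules (both indexed by the matrix factors of the common semisimple quotient), so pulling back identifies $\rd_A^{\fp,\sO}([S_i])$ with the class of a unique simple $A(\fp)$-module, and distinct $S_i$ produce distinct targets exhausting all isomorphism classes. This is precisely triviality of $\rd_A^{\fp,\sO}$. The main obstacle, to my mind, is the second step: one really needs the split assumption on $A(\fp)$ so that the Jacobson radical commutes with the base change $\rk(\fp)\hookrightarrow\rk(\fm)$, because only then can the hypothesis on Jacobson dimensions be turned into the crucial identification $\bar J=\Jac(\bar A)$ that underwrites the reduction to the semisimple case.
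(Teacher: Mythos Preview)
Your proposal is correct and follows essentially the same route as the paper: form the $\sO$-free quotient $B=A^\sO/(A^\sO\cap\Jac(A^K))$, use the DVR hypothesis to ensure freeness, use the splitting of $A(\fp)$ together with the dimension hypothesis to identify $\bar J=\Jac(A^\sO(\fm))$, and then apply the classical Tits deformation theorem to $B$. The paper packages the final step as a commutative square between $\rd_A^{\fp,\sO}$ and $\rd_{\wt A}^{\fm,\sO}$ rather than your direct tracking of the $\bar S_i$, but the content is identical; your explicit observation that $\tilde J\tilde S_i=0$ is exactly what makes that square commute.
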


\begin{proof} 
Let $\fm$ be the maximal ideal of $\sO$. We first show that the Jacobson radical of the specialization $A^\sO(\fm)$ is equal to the specialization of $\Jac(A^K)$ in $\fm$. By Lemma \ref{jac_spec_dim_lemma} we have 
\begin{equation} \label{jac_AO_m_subs_Jac_AOm}
\mrm{ext}_{A^\sO}^{\rk(\fm)} \circ \mrm{con}_{A^\sO}^K \Jac(A^K) \subs \Jac(A^\sO(\fm)) \;.
\end{equation}
Since $A^\sO$ is $\sO$-torsion free, also $\mrm{con}_{A^\sO}^K(\Jac(A^K)) \leq A^\sO$ is $\sO$-torsion free and thus free since $\sO$ is a discrete valuation ring. Moreover, by \S\ref{submodule_stuff} we have
\[
\mrm{ext}_{A^\sO}^K \circ \mrm{con}_{A^\sO}^K(\Jac(A^K)) =  \Jac(A^K)
\]
and therefore 
\begin{align} \label{jac_AO_m_dim_jac_AK_dim}
\dim_{\rk(\fm)} \mrm{ext}_{A^\sO}^{\rk(\fm)} \circ \mrm{con}_{A^\sO}^K \Jac(A^K) = \dim_\sO \mrm{con}_{A^\sO}^K \Jac(A^K)  = \dim_K \Jac(A^K) \;.
\end{align}
Since $A(\fp)$ splits, it follows from \cite[7.9(i)]{CR-Methods-1} that 
\[
\mrm{ext}_{A(\fp)}^{\rk(\fm)} \Jac(A(\fp)) = \Jac(A^\sO(\fm)) \;.
\]
 Combining this with equation (\ref{jac_AO_m_dim_jac_AK_dim}) and our assumption, we see that
 \[
 \dim_{\rk(\fm)} \Jac(A^\sO(\fm)) = \dim_{\rk(\fp)} \Jac(A(\fp)) = \dim_K \Jac(A^K) = \dim_{\rk(\fm)} \Jac(A^K)(\fm)  \;.
 \]
Because of this, we must already have equality in (\ref{jac_AO_m_subs_Jac_AOm}), so
\begin{equation} \label{jac_AO_m_eq_jac_AK_m}
\Jac(A^K)(\fm) = \mrm{ext}_{A^\sO}^{\rk(\fm)} \circ \mrm{con}_{A^\sO}^K \Jac(A^K) = \Jac(A^\sO(\fm)) \;.
\end{equation}

We know from \S\ref{submodule_stuff} that the quotient $\wt{A} \dopgleich A^\sO/\mrm{con}_{A^\sO}^K \Jac(A^K)$ is $\sO$-torsion free and thus already $\sO$-free of finite dimension since $\sO$ is a discrete valuation ring. According to \S\ref{submodule_stuff} we have 
\[ 
\wt{A}^K = (A^\sO / \mrm{con}_{A^\sO}^K \Jac(A^K))^K \cong A^K/(\mrm{ext}_{A^\sO}^K \circ \mrm{con}_{A^\sO}^K \Jac(A^K)) = A^K/\Jac(A^K)  \;.
\]
Hence, $\wt{A}^K$ is semisimple, and it is also split as a quotient the split algebra $A^K$. Furthermore, the canonical morphism
\[
\rG_0(A^K) \rarr \rG_0(A^K/\Jac(A^K)) = \rG_0(\wt{A}^K)
\]
clearly is trivial, i.e., inducing a bijection between the simple modules. Because of equation (\ref{jac_AO_m_eq_jac_AK_m}) also 
\begin{align*}
\wt{A}(\fm) &= \wt{A}^{\rk(\fm)} = (A^\sO/\mrm{con}_{A^\sO}^K \Jac(A^K) )^{\rk(\fm)} \cong A^\sO(\fm)/(\mrm{ext}_{A^\sO}^{\rk(\fm)} \circ \mrm{con}_{A^\sO}^K \Jac(A^K)) \\ & = A^\sO(\fm)/\Jac(A^\sO(\fm))
\end{align*}
is split semisimple and again the canonical morphism
\[
\rG_0(A^\sO(\fm)) \rarr \rG_0(A^\sO(\fm)/\Jac(A^\sO(\fm))) = \rG_0(\wt{A}(\fm))
\]
is trivial. Moreover, since $A(\fp)$ splits, the canonical morphism 
\[
\rG_0(A(\fp)) \rarr \rG_0(A(\fp)^{\rk(\fm)}) = \rG_0(A^\sO(\fm))
\]
is trivial and so we have a canonical morphism
\[
\rG_0(A(\fp)) \rarr \rG_0(\wt{A}(\fm))
\]
which is trivial.

Now, $\sO$ is an $\wt{A}$-gate in $\fp$ and so the decomposition map 
\[
\rd_{\wt{A}}^{\fm,\sO}:\rG_0(\wt{A}^K) \rarr \rG_0(\wt{A}(\fm))
\]
exists. According to Tits's deformation theorem \cite[7.4.6]{GP-Coxeter-Hecke} (for which we do not need that the base ring is normal, we just need the decomposition morphism to exist as follows directly from the given proof), this morphism is trivial since $\wt{A}(\fm)$ is split semisimple. Once we know that the diagram
\[
\begin{tikzcd}
\rG_0(A^K) \arrow{r}{\rd_A^{\fp,\sO}} \arrow{d}[swap]{\cong} & \rG_0(A(\fp)) \arrow{d}{\cong} \\
\rG_0(\wt{A}^K) \arrow{r}[swap]{\rd_{\wt{A}}^{\fm,\sO}} & \rG_0(\wt{A}(\fm))
\end{tikzcd}
\]
commutes, where the vertical morphisms are the morphisms discussed above, we also know that $\rd_A^{\fp,\sO}$ is trivial. It suffices to check commutativity on simple $A^K$-modules, so let $S$ be a simple $A^K$-module. Let $\wt{S}$ be an $\sO$-free $A^\sO$-form of $S$. Then $\rd_A^{\fp,\sO}(\lbrack S \rbrack) = (\gamma_{A}^{\fp,\fm})^{-1}( \lbrack \wt{S}/\fm \wt{S} \rbrack)$, and the image of this element under the right vertical morphism is equal to $\lbrack \wt{S}/\fm \wt{S} \rbrack$. On the other hand, the image of $\lbrack S \rbrack$ under the left vertical morphism is again $\lbrack S \rbrack$ and as $\wt{S}$ is also an $\sO$-free $\wt{A}$-form of $S$, we have $\rd_{\wt{A}}^{\fp,\sO}(\lbrack S \rbrack) = \lbrack \wt{S}/\fm \wt{S} \rbrack$. Hence, the diagram commutes and therefore $\rd_A^{\fp,\sO}$ is trivial as claimed.
\end{proof}

\subsection{Consequences}

Using the two results about the connection between triviality of a decomposition map and preservation of the dimension of the Jacobson radical we can now prove that triviality of a decomposition map in $\fp$ does not depend on the choice of the gate, provided that $R$ is noetherian and both $A^K$ and $A(\fp)$ split.

\begin{proof}[{Proof} of Theorem \ref{one_trivial_all_triival}]
Suppose that $\rd_A^{\fp,\sO}$ is trivial for some $A$-gate $\sO$ in $\fp$. Proposition \ref{d_gen_jac_gen} implies that $\dim_K \Jac(A^K) = \dim_{\rk(\fp)} \Jac(A(\fp))$. Now, let $\sO'$ be an arbitrary $A$-gate in $\fp$. By Theorem \ref{all_decs_are_discrete} there is a discrete $A$-gate $\sO''$ in $\fp$ with $\rd_A^{\fp,\sO'} = \rd_A^{\fp,\sO''}$. Since $\dim_K \Jac(A^K) = \dim_{\rk(\fp)} \Jac(A(\fp))$ and $\sO''$ is discrete, Theorem \ref{tits_deformation_geck} implies that $\rd_A^{\fp,\sO''}$ is trivial. Hence, $\rd_A^{\fp,\sO'}$ is trivial. 
\end{proof}

Putting all results of this section together we can now conclude that the inclusion (\ref{decgen_spl_subs_jacdimgen_spl}) is actually an equality.

\begin{corollary} \label{decgen_cap_spl_eq_jacdimgen_cap_spl}
If $A^K$ splits and if $R$ is noetherian, then
\[
\mrm{DecGen}(A) \cap \mrm{SplGen}(A) = \ul{\mrm{JacDim}}\mrm{Gen}(A) \cap \mrm{SplGen}(A) \;.
\] 
\end{corollary}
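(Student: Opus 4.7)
The inclusion $\subseteq$ is already recorded as (\ref{decgen_spl_subs_jacdimgen_spl}) and is a direct consequence of Proposition \ref{d_gen_jac_gen}, so the work lies in the reverse inclusion. My plan is to take $\fp \in \ul{\mrm{JacDim}}\mrm{Gen}(A) \cap \mrm{SplGen}(A)$ and exhibit, for any $A$-gate $\sO$ in $\fp$, triviality of $\rd_A^{\fp,\sO}$, with the existence of an $A$-gate being clear from splitting of $A(\fp)$.

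First I would dispose of the trivial case $\fp = (0)$: here $A(\fp) = A^K$, every $A$-gate (e.g.\ $K$ itself, viewed as a valuation ring) induces the identity on $\rG_0(A^K)$, and hence $\fp \in \mrm{DecGen}(A)$. For the remaining case $\fp \neq (0)$, I use Lemma \ref{split_gates_exist} together with $\fp \in \mrm{SplGen}(A)$ to guarantee the existence of an $A$-gate; in fact every valuation ring in $K$ dominating $R_\fp$ is one. Now given any $A$-gate $\sO$ in $\fp$, I invoke Theorem \ref{all_decs_are_discrete} (which applies precisely because $R$ is noetherian, $\fp \neq (0)$, and $A(\fp)$ splits) to replace $\sO$ by a \emph{discrete} $A$-gate $\sO'$ in $\fp$ with $\rd_A^{\fp,\sO} = \rd_A^{\fp,\sO'}$.

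At this point I apply Theorem \ref{tits_deformation_geck} to $\sO'$: the hypotheses of that theorem, namely that $A^K$ and $A(\fp)$ split, that $\sO'$ is a discrete $A$-gate, and that $\dim_K \Jac(A^K) = \dim_{\rk(\fp)} \Jac(A(\fp))$, are all in place — the last by virtue of $\fp \in \ul{\mrm{JacDim}}\mrm{Gen}(A)$. The conclusion is that $\rd_A^{\fp,\sO'}$ is trivial, whence $\rd_A^{\fp,\sO}$ is trivial as well. Since $\sO$ was arbitrary, $\fp \in \mrm{DecGen}(A)$, which gives the desired reverse inclusion.

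I do not expect a substantive obstacle: the corollary is really a bookkeeping synthesis of the work done earlier in the section, namely Proposition \ref{d_gen_jac_gen} for one direction and the chain Theorem \ref{all_decs_are_discrete} $\Rightarrow$ Theorem \ref{tits_deformation_geck} for the other. The only subtlety worth flagging in the write-up is the separate treatment of the generic point $\fp = (0)$, which is excluded from Theorem \ref{all_decs_are_discrete} but is handled tautologically. Alternatively, one may cite Theorem \ref{one_trivial_all_triival} to pass from triviality for one $A$-gate to triviality for all, though this is subsumed by the argument above.
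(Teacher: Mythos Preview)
Your argument is correct and follows precisely the route the paper intends: the forward inclusion is (\ref{decgen_spl_subs_jacdimgen_spl}) via Proposition \ref{d_gen_jac_gen}, and the reverse inclusion is obtained by passing to a discrete $A$-gate via Theorem \ref{all_decs_are_discrete} and then applying Theorem \ref{tits_deformation_geck}, which is exactly the chain used in the proof of Theorem \ref{one_trivial_all_triival} that the paper invokes with the phrase ``putting all results of this section together''. Your explicit treatment of the generic point $\fp=(0)$ is a nice touch, since the standing convention in that section is $\fp\neq(0)$ and Theorem \ref{all_decs_are_discrete} excludes it.
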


Note that if $A$ has split fibers, the corollary shows that $\mrm{DecGen}(A) = \ul{\mrm{JacDim}}\mrm{Gen}(A) $. In a setting where we can use Schur elements (this is the setting of Proposition \ref{tits_def_theorem_modgen}), we can now already conclude that $\mrm{DecGen}(A)$ is open.

\begin{proof}[{Proof} of Proposition \ref{tits_def_theorem_modgen}]
As $R$ is normal and $A$ is symmetric, the Schur elements are contained in $R$ by \cite[7.3.9]{GP-Coxeter-Hecke} and so $\rV(c_i)$ is well-defined. Suppose that $\fp$ is not contained in any of the $\rV(c_i)$. Then the images of the $c_i$ in $A(\fp)$ are all non-zero and so it follows from Tits's deformation theorem \cite[7.4.6]{GP-Coxeter-Hecke} that $A(\fp)$ is semisimple and that $\rd_A^\fp$ is trivial. Hence, $\fp \in \mrm{DecGen}(A)$. This shows that
\[
\mrm{DecGen}(A) \sups \Spec(R) \setminus \bigcup_{i \in I} \rV(c_i) \;.
\]
Now, let $\fp \in \bigcup_{i \in I} \rV(c_i)$. Then one Schur element of $A(\fp)$ is equal to zero and therefore $A(\fp)$ is not semisimple by \cite[7.2.6]{GP-Coxeter-Hecke}. Hence, $\Jac(A(\fp)) \neq 0$ and therefore $\fp \notin \ul{\mrm{JacDim}}\mrm{Gen}(A) = \mrm{DecGen}(A)$ by Corollary \ref{decgen_cap_spl_eq_jacdimgen_cap_spl}. This shows the asserted equality.
\end{proof}

As explained in the introduction, the setting of  Proposition \ref{tits_def_theorem_modgen} is too restrictive for us. We will see that we can remove all the assumptions that $R$ is normal, that $A$ is symmetric, and that $A^K$ is semisimple, and can still conclude that $\mrm{DecGen}(A)$ is open—although we loose the explicit description of this set. To this end, we have to establish geometric properties of the sets on the right hand side of Corollary \ref{decgen_cap_spl_eq_jacdimgen_cap_spl} in general. In the next two sections we first show that these sets are neighborhoods of the generic point. In \S\ref{generic_representation_theory} we turn to the problem of proving openness. Note already that all these sets are formed by considering a property on a class of algebras and then sort out for given $A$ all fibers satisfying this property. This will be the point of view in \S\ref{generic_representation_theory}.

\section{The split locus} \label{split_locus}

In this paragraph we show that the \textit{split locus} $\mrm{SplGen}(A)$ is a generic neighborhood for any finite free algebra $A$ over an integral domain. We note that this was already proven in \cite{Cline:1999aa} with less effort but our proof contains already some key arguments to be used in the proof of the main theorem. \\

We will use the following result due to Bonnafé and Rouquier which is proven in \cite[proposition C.2.11]{Bonnafe.C;Rouquier.R13Cellules-de-Calogero} in the context of the behavior of blocks under specializations. We give it in a more general form here but prove it by the same arguments.

\begin{lemma} \label{br_gen_lemma} \label{symb_genAF}
Let $\sF \subs A^K$ be a finite subset. Then 
\[
 \mrm{Gen}_A(\sF) \dopgleich \lbrace \fp  \in \Spec(R) \mid \sF \subs A_{\fp } \rbrace 
\]
is an open generic neighborhood in $\Spec(R)$.
\end{lemma}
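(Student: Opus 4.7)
The plan is to realize $\mrm{Gen}_A(\sF)$ explicitly as the complement of the vanishing locus of a suitable non-zero ideal of $R$. Since $A$ is finite free over $R$, fix an $R$-basis $(a_i)_{i=1}^n$ of $A$ so that every element of $A^K = K \otimes_R A$ has a unique expansion $f = \sum_{i=1}^n \lambda_i a_i$ with $\lambda_i \in K$. For any prime $\fp$ of $R$ we then have $A_\fp = \bigoplus_i R_\fp a_i \subs A^K$, and consequently $f \in A_\fp$ if and only if every $\lambda_i$ lies in $R_\fp$, which is equivalent to the existence of an element $s \in R \setminus \fp$ with $sf \in A$.

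Guided by this observation I would introduce, for each $f \in A^K$, the \emph{ideal of denominators}
\[
\fd_f \dopgleich \{s \in R \mid sf \in A\} \;,
\]
which is easily seen to be an ideal of $R$. Clearing denominators of the finitely many $\lambda_i$ simultaneously produces a non-zero element of $\fd_f$, so $\fd_f \neq 0$. From the preceding paragraph, $f \in A_\fp$ if and only if $\fd_f \not\subs \fp$. For the given finite set $\sF$, set
\[
\fd_\sF \dopgleich \bigcap_{f \in \sF} \fd_f = \{ s \in R \mid s\sF \subs A \} \;.
\]
Being a finite intersection of non-zero ideals in an integral domain, $\fd_\sF$ is itself non-zero, and evidently $\sF \subs A_\fp$ if and only if $\fd_\sF \not\subs \fp$.

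Putting these two steps together yields the clean description
\[
\mrm{Gen}_A(\sF) = \Spec(R) \setminus \rV(\fd_\sF) \;,
\]
which is manifestly open in $\Spec(R)$. Since $\fd_\sF \neq 0$, the closed set $\rV(\fd_\sF)$ does not contain the generic point $(0)$ of $\Spec(R)$, so $(0) \in \mrm{Gen}_A(\sF)$ and the set is indeed a neighborhood of the generic point. There is no real obstacle in this argument; the one conceptual step is recognising that freeness of $A$ over $R$ allows one to reduce membership of elements of $A^K$ in $A_\fp$ to the standard fact about ideals of denominators in the integral domain $R$, after which the openness and the generic-point condition are immediate from the non-vanishing of $\fd_\sF$.
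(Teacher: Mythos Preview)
Your argument is correct and follows essentially the same route as the paper's proof: both pass through an ``ideal of denominators'' and use primality of $\fp$ to reduce the finite conjunction to a single ideal, arriving at $\mrm{Gen}_A(\sF) = \Spec(R) \setminus \rV(I)$ for a non-zero ideal $I$. The only cosmetic difference is that the paper expands each $f$ in the $R$-basis and works with the denominator ideals $I_{\alpha_{f,i}} \subs R$ of the individual coefficients, taking the finite \emph{product} $I = \prod_{f,i} I_{\alpha_{f,i}}$, whereas you stay at the level of $A^K$ with $\fd_f = \bigcap_i I_{\alpha_{f,i}}$ and take the finite \emph{intersection}; since $\rV$ of a finite product and a finite intersection of non-zero ideals agree, both yield the same open set.
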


\begin{proof}
For an element $\alpha \in K$ we define $I_\alpha \dopgleich \lbrace r \in R \mid r\alpha \in R \rbrace$. This is an ideal in $R$ and it has the property that $\alpha \in R_{\fp }$ if and only if $I_\alpha \nsubseteq \fp $. To see this, suppose that $\alpha \in R_{\fp }$. Then we can write $\alpha = \frac{r}{x}$ for some $x \in R \setminus \fp $. Hence, $x \alpha = r \in R$ and therefore $x \in I_\alpha$. Since $x \notin \fp $, it follows that $I_\alpha \nsubseteq \fp $. Conversely, if $I_\alpha \nsubseteq \fp $, then there exists $x \in I_\alpha$ with $x \notin \fp $. By definition of $I_\alpha$ we have $x \alpha \gleichdop r \in R$ and since $x \notin \fp $, we can write $\alpha = \frac{r}{x} \in R_{\fp }$. 

Now, let $(a_1,\ldots,a_n)$ be an $R$-basis of $A$. Then we can write every element $f \in \sF$ as $f = \sum_{i=1}^n \alpha_{f,i} a_i$ with $\alpha_{f,i} \in K$. Let
\[
I \dopgleich \prod_{{f \in \sF, \; i \in \lbrack 1,n \rbrack}} I_{\alpha_{f,i}} \unlhd R \;.
\] 
Then by the properties of the ideals $I_\alpha$ we have the following logical equivalences:
\[
\begin{array}{rcl}
(\sF \subs A_{\fp }) & \Longleftrightarrow & (\alpha_{f,i} \in R_{\fp } \quad \forall f \in \sF, i \in \lbrack 1,n \rbrack) \\ &\Longleftrightarrow& (I_{\alpha} \not\subs \fp  \quad \forall f  \in \sF, i \in \lbrack 1,n \rbrack) \\ & \Longleftrightarrow & (I \not\subs \fp ),
\end{array}
\]
the last equivalence following from the fact that $\fp $ is prime. Hence, 
\[
\Spec(R) \setminus \mrm{Gen}_A(\sF) = \rV(I) \;,
\]
implying that $\mrm{Gen}_A(\sF)$ is an open subset of $\Spec(R)$.
\end{proof}

\begin{corollary} \label{br_gen_lemma_infinite}
If $\sF \subs A^K$ is a (not necessarily finite) subset, then the set
\[
\lbrace \fp \in \Spec(R) \mid \sF \cap A_\fp \neq \emptyset \rbrace
\]
is an open generic neighborhood of $\Spec(R)$.
\end{corollary}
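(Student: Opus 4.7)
The plan is to reduce directly to Lemma~\ref{br_gen_lemma} by writing the set as a union over singletons. Specifically, I would first observe the set-theoretic identity
\[
\lbrace \fp \in \Spec(R) \mid \sF \cap A_\fp \neq \emptyset \rbrace = \bigcup_{f \in \sF} \mrm{Gen}_A(\lbrace f \rbrace),
\]
since $\fp$ lies in the left-hand side exactly when at least one $f \in \sF$ belongs to $A_\fp$. Each singleton $\lbrace f \rbrace$ is finite, so Lemma~\ref{br_gen_lemma} applies and shows that each $\mrm{Gen}_A(\lbrace f \rbrace)$ is open in $\Spec(R)$. Arbitrary unions of open sets are open, so the set in question is open in $\Spec(R)$, which takes care of the topological half of the claim.

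For the claim that this open set contains the generic point $(0)$, I would simply note that $A_{(0)} = A^K$, so $\sF \subs A_{(0)}$ by hypothesis; hence for any chosen $f \in \sF$ the generic point $(0)$ lies in $\mrm{Gen}_A(\lbrace f \rbrace)$ and therefore in the union. (The degenerate case $\sF = \emptyset$ makes the set empty, and one tacitly understands the statement for $\sF$ non-empty, consistent with how such sets are used elsewhere in the paper.)

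There is no real obstacle here: the entire content of the corollary is the passage from the finite subsets handled by Lemma~\ref{br_gen_lemma} to arbitrary subsets, and this passage is immediate because openness in the Zariski topology is preserved under arbitrary unions — no quasi-compactness or finiteness argument is needed. In short, the proof is a one-line reduction once the identity with the union of $\mrm{Gen}_A(\lbrace f \rbrace)$ is written down.
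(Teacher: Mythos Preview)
Your proof is correct and essentially identical to the paper's own argument: the paper also writes the set as $\bigcup_{f \in \sF} \mrm{Gen}_A(\lbrace f \rbrace)$ and invokes Lemma~\ref{br_gen_lemma} for each singleton. Your explicit remark on the generic point and the degenerate case $\sF = \emptyset$ is a welcome clarification the paper leaves implicit.
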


\begin{proof}
The given set is clearly equal to $\bigcup_{f \in \sF} \mrm{Gen}_A(\lbrace f \rbrace)$ 
and as each set $\mrm{Gen}_A(\lbrace f \rbrace)$ is an open generic neighborhood by Lemma \ref{br_gen_lemma}, so is the given set.
\end{proof}

\begin{remark} \label{codim1}
The proof by Bonnafé and Rouquier actually also shows that the complement $\mrm{Ex}_A(\sF)$ of $\mrm{Gen}_A(\sF)$ is either empty or pure of codimension one.
\end{remark}

Our proof of the fact that $\mrm{SplGen}(A)$ is a generic neighborhood is based on arguments by Geck \cite{Geck.M98Representations-of-H}. The key idea is to consider the behavior of  morphisms from $A^K$ into split semisimple $K$-algebras upon reduction modulo prime ideals of $R$. The following proposition shows that the set of prime ideals where such a morphism has ``good reduction'' is indeed open.

\begin{proposition} \label{surjective_morphism_open_gen} \label{symb_genApsi}
Let $\psi:A^K \rarr \prod_{t=1}^n \Mat_{n_t}(K)$ be a surjective morphism of $K$-modules. When considering $\prod_{t=1}^n \Mat_{n_t}(R_\fp)$ canonically as a subset of $\prod_{t=1}^n \Mat_n(K)$, then the sets
\[
\mrm{Gen}_A^{\subs}(\psi) \dopgleich \lbrace \fp \in \Spec(R) \mid \psi(A_\fp) \subs \prod_{t=1}^n \Mat_{n_t}(R_\fp)  \rbrace \;,
\]
\[
\mrm{Gen}_A^{\sups}(\psi) \dopgleich  \lbrace \fp \in \Spec(R) \mid \psi(A_\fp) \sups \prod_{t=1}^n \Mat_{n_t}(R_\fp) \rbrace \;,
\]
and
\[
\mrm{Gen}_A(\psi) \dopgleich \lbrace \fp \in \Spec(R) \mid \psi(A_\fp) = \prod_{t=1}^n \Mat_{n_t}(R_\fp) \rbrace
\]
are open generic neighborhoods in $\Spec(R)$.
\end{proposition}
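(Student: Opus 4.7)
The plan is to reduce each of the three conditions to the principle behind Lemma \ref{br_gen_lemma}: for a finite free $R$-module $M$ and a finite subset $\sG \subs M^K$, the set of primes $\fp$ such that $\sG \subs M_\fp$ is an open neighborhood of the generic point. The proof of Lemma \ref{br_gen_lemma} uses only that $A$ is a finite free $R$-module, so it applies verbatim with $A$ replaced by the finite free $R$-module $M = \prod_{t=1}^n \Mat_{n_t}(R)$, whose natural $R$-basis is the family of matrix units $E_1, \ldots, E_N$. That all three sets contain the generic point is automatic: at $\fp = (0)$ one has $R_{(0)} = K$, $A_{(0)} = A^K$, and surjectivity of $\psi$ forces $\psi(A^K) = \prod_t \Mat_{n_t}(K)$, giving both inclusions.

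For $\mrm{Gen}_A^{\subs}(\psi)$, I would fix an $R$-basis $(a_1, \ldots, a_d)$ of $A$; since $\psi$ is $K$-linear, $\psi(A_\fp) \subs \prod_t \Mat_{n_t}(R_\fp)$ if and only if the finite subset $\lbrace \psi(a_1), \ldots, \psi(a_d) \rbrace$ of $\prod_t \Mat_{n_t}(K)$ is contained in $\prod_t \Mat_{n_t}(R_\fp)$, which is an open condition by the generalised Lemma \ref{br_gen_lemma} applied to $M$. For $\mrm{Gen}_A^{\sups}(\psi)$, I would introduce the fibres $\sF_\ell \dopgleich \psi^{-1}(E_\ell) \subs A^K$, nonempty by surjectivity, and argue that
\[
\mrm{Gen}_A^{\sups}(\psi) \;=\; \bigcap_{\ell=1}^N \lbrace \fp \in \Spec(R) \mid \sF_\ell \cap A_\fp \neq \emptyset \rbrace.
\]
The inclusion $\supseteq$ holds because $\psi(A_\fp)$ is an $R_\fp$-submodule of $\prod_t \Mat_{n_t}(K)$ by $K$-linearity, and if it contains each $E_\ell$ then it contains their $R_\fp$-span $\prod_t \Mat_{n_t}(R_\fp)$; the inclusion $\subseteq$ is immediate since each $E_\ell$ must have a preimage in $A_\fp$. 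Each factor is an open generic neighborhood by Corollary \ref{br_gen_lemma_infinite}, and a finite intersection inherits this property. Finally, $\mrm{Gen}_A(\psi) = \mrm{Gen}_A^{\subs}(\psi) \cap \mrm{Gen}_A^{\sups}(\psi)$ is an open generic neighborhood for the same reason.

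The only conceptually subtle point is the $\sups$ case. A naive attempt would fix a single tuple of preimages $(f_1, \ldots, f_N) \in \sF_1 \times \cdots \times \sF_N$ and invoke Lemma \ref{br_gen_lemma} on the finite set $\lbrace f_1, \ldots, f_N \rbrace \subs A^K$, but this only captures a subset of $\mrm{Gen}_A^{\sups}(\psi)$ because different primes may require different preimages of the same matrix unit. Passing from Lemma \ref{br_gen_lemma} to Corollary \ref{br_gen_lemma_infinite}, which handles the condition $\sF \cap A_\fp \neq \emptyset$ over the entire infinite fibre at once, is precisely the manoeuvre that bridges this gap.
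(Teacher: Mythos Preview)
Your argument is correct and follows essentially the same approach as the paper: reduce $\mrm{Gen}_A^{\subs}(\psi)$ to Lemma \ref{br_gen_lemma} via an $R$-basis of $A$, reduce $\mrm{Gen}_A^{\sups}(\psi)$ to Corollary \ref{br_gen_lemma_infinite} via the preimages of an $R$-basis of $\prod_t \Mat_{n_t}(R)$, and intersect. The only cosmetic difference is that for the $\subs$ case the paper introduces an auxiliary $K$-basis of $A^K$ adapted to $\Ker(\psi)$ and unwinds everything into scalar coefficients, whereas you apply Lemma \ref{br_gen_lemma} directly to the finite free $R$-module $M=\prod_t \Mat_{n_t}(R)$; your shortcut is legitimate since the proof of that lemma uses only the module structure.
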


\begin{proof}
Let $\sB \dopgleich (b_i)_{i=1}^m$ be a basis of $A^K$ such that $(b_i)_{i=1}^r$ is a basis of $\Ker(\psi)$ and $(\psi(b_i))_{i=r+1}^m$ is an $R$-basis of $\prod_{t=1}^n \Mat_{n_t}(R) \subs \prod_{t=1}^n \Mat_{n_t}(K)$. This is possible since $A^K/\Ker(\psi) \cong \prod_{t=1}^n \Mat_{n_t}(K)$ and so one can choose $(b_i)_{i=r+1}^m$ to map for example in each component to the elementary matrices. 

To prove the assertion for the first set, let $\sA \dopgleich (a_i)_{i=1}^m$ be an $R$-basis of $A$. The $K$-linearity of $\psi$ and the fact that $(a_i)_{i=1}^m$ is also an $R_\fp$-basis of $A_\fp$ for all $\fp \in \Spec(R)$ implies the equality
\[
\mrm{Gen}_A^\subs(\psi) = \lbrace \fp \in \Spec(R) \mid \psi(a_i) \in \prod_{t=1}^n \Mat_{n_t}(R_\fp) \quad \tn{for all }  i =1,\ldots,m \rbrace \;.
\]
We can write each basis element $a_i$ uniquely as $a_i = \sum_{j=1}^m \alpha_{ij} b_j$ with $\alpha_{ij} \in K$. As $(\psi(b_i))_{i=r+1}^m$ is an $R$-basis of $\prod_{t=1}^n \Mat_{n_t}(R)$, it is also an $R_\fp$-basis of $\prod_{t=1}^n \Mat_{n_t}(R_\fp)$ and since
\[
\psi(a_i) = \psi \left( \sum_{j=1}^m \alpha_{ij} b_j \right) = \sum_{j=1}^m \alpha_{ij} \psi(b_i) = \sum_{j=r+1}^m \alpha_{ij} \psi(b_i) \;,
\]
the uniqueness of the basis representation implies that the element $\psi(a_i)$ is contained in $\prod_{t=1}^n \Mat_{n_t}(R_\fp)$ if and only if $\alpha_{ij} \in R_\fp$ for all $i = 1,\ldots,m$ and all $j=r+1,\ldots,m$. Hence,
\[
\mrm{Gen}_A^\subs(\psi) = \mrm{Gen}_R( (\alpha_{ij})_{\substack{i=1,\ldots,m \\ j=r+1,\ldots,m}})
\]
and this is an open generic neighborhood by Lemma \ref{br_gen_lemma}.

Now we consider the assertion for the second set. It is obvious that
\begin{align*}
\mrm{Gen}_A^\sups(\psi) & = \bigcap_{x \in \prod_{t=1}^n \Mat_{n_t}(R_\fp)} \lbrace \fp \in \Spec(R) \mid \psi^{-1}(x) \cap A_\fp \neq \emptyset \rbrace \\
& \subs \bigcap_{i=r+1}^m \lbrace \fp \in \Spec(R) \mid \psi^{-1}(\psi(b_i)) \cap A_\fp \neq \emptyset \rbrace \;.
\end{align*}
But this is actually an equality. To see this, suppose that $\fp$ is contained in the last (finite) intersection. Then we can choose $c_i \in \psi^{-1}(\psi(b_i)) \cap A_\fp$ for all $r+1 \leq i \leq m$. Let $A'$ be the $R_\fp$-span of the $c_i$ in $A_\fp$. Since $\psi(c_i) = b_i$ and since $( \psi(b_i))_{i=r+1}^m$ is an $R$-basis of $\prod_{t=1}^n \Mat_{n_t}(R)$ by assumption so that $(\psi(b_i))_{i=r+1}^m$ is also an $R_\fp$-basis of $\prod_{t=1}^n \Mat_{n_t}(R_\fp)$, it follows by $R_\fp$-linearity that $\psi(A') = \prod_{t=1}^n \Mat_{n_t}(R_\fp)$. Hence, $\psi(A_\fp) \sups \prod_{t=1}^n \Mat_{n_t}(R_\fp)$ and therefore
\[
\mrm{Gen}_A^\sups(\psi)  = \bigcap_{i=r+1}^m \lbrace \fp \in \Spec(R) \mid \psi^{-1}(\psi(b_i)) \cap A_\fp \neq \emptyset \rbrace \;.
\]
As each of the sets in the above finite intersection is an open generic neighborhood by Corollary \ref{br_gen_lemma_infinite}, also $\mrm{Gen}_A^\sups(\psi)$ is an open generic neighborhood.

Finally, since $\mrm{Gen}_A(\psi) = \mrm{Gen}_A^\sups(\psi) \cap \mrm{Gen}_A^\subs(\psi)$, it follows that also $\mrm{Gen}_A(\psi)$ is an open generic neighborhood.
\end{proof}

As a last ingredient we recall the following elementary lemma.

\begin{lemma} \label{algebra_split_nilp_kernel}
Suppose that $A$ is a finite-dimensional algebra over a field $K$. Then $A$ splits if and only if there exists a surjective $K$-algebra morphism $\psi:A \rarr S$ into a split semisimple $K$-algebra $S$ such that $\Ker(\psi)$ is nilpotent. The kernel of any such morphism is already equal to $\Jac(A)$ so that $A/\Jac(A) \cong S$.
\end{lemma}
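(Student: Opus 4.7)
The plan is to deduce both implications from the characterization of splitting recalled just before Lemma \ref{split_gates_exist}—namely that $A$ splits if and only if $A/\Jac(A)$ is a finite direct product of matrix algebras over $K$—combined with the standard fact that in a finite-dimensional (hence artinian) algebra the Jacobson radical is the largest nilpotent ideal.

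For the forward implication, assuming $A$ splits, I would simply take $S \dopgleich A/\Jac(A)$ and let $\psi$ be the canonical quotient morphism. By the cited characterization $S$ is a finite product of matrix algebras over $K$, hence split semisimple, and $\Ker(\psi) = \Jac(A)$ is nilpotent because $A$ is artinian.

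For the backward implication, the first step is to identify $\Ker(\psi)$ with $\Jac(A)$. Since $\Ker(\psi)$ is a nilpotent two-sided ideal of $A$, it is contained in $\Jac(A)$. The standard quotient formula then gives $\Jac(A/\Ker(\psi)) = \Jac(A)/\Ker(\psi)$, and because $A/\Ker(\psi) \cong S$ is semisimple the left-hand side vanishes, forcing $\Jac(A) = \Ker(\psi)$. This already supplies the final assertion $A/\Jac(A) \cong S$, and since $S$ is split semisimple the characterization recalled above shows that $A$ splits.

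There is no real obstacle here; the lemma is essentially an unpacking of the characterization of split algebras together with the description of $\Jac(A)$ as the largest nilpotent ideal in the artinian setting. The only minor point to take care of is to invoke the quotient behavior of $\Jac$ for ideals contained in the radical, which is exactly what pins down the reverse inclusion $\Jac(A) \subs \Ker(\psi)$.
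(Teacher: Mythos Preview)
Your proof is correct and follows essentially the same route as the paper: the forward direction is identical, and for the converse both arguments first use nilpotency to get $\Ker(\psi)\subs\Jac(A)$ and then deduce the reverse inclusion from $\Jac(S)=0$ (the paper via $\psi(\Jac(A))\subs\Jac(S)$, you via the quotient formula $\Jac(A/\Ker(\psi))=\Jac(A)/\Ker(\psi)$, which is the same statement).
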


\begin{proof}
If $A$ splits, then the morphism obtained by the composition $A \rarr A/\Jac(A)$ with the isomorphism $A/\Jac(A) \cong \prod_{i=1}^n \Mat_{n_i}(K)$ by the Artin--Wedderburn theorem satisfies the claimed properties since $A/\Jac(A)$ splits by \cite[7.9]{Lam-Exercises}. Conversely, assume that $\psi$ is such a morphism. Since $\Ker(\psi)$ is nilpotent, we have $\Ker(\psi) \subs \Jac(A)$. Since $\psi$ is surjective, we have $\psi(\Jac(A)) \subs \Jac(S) = 0$ and therefore $\Jac(A) \subs \Ker(\psi)$. Hence, $A/\Jac(A) = A/\Ker(\psi) \cong S$ is split and now it follows from \cite[7.9]{Lam-Noncommutative} that $A$ also splits. 
\end{proof}

\begin{theorem}  \label{split_locus_almost_open}
If $A^K$ splits, then the split locus  
\[
\mrm{SplGen}(A) = \lbrace \fp \in \Spec(R) \mid A(\fp) \tn{ splits} \rbrace
\]
is a neighborhood of the generic point in $\Spec(R)$.
\end{theorem}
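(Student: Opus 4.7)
The plan is to reduce to the algebraic characterization of split algebras given by Lemma \ref{algebra_split_nilp_kernel} and then exploit the openness result in Proposition \ref{surjective_morphism_open_gen}.

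First, since $A^K$ splits, Lemma \ref{algebra_split_nilp_kernel} supplies a surjective $K$-algebra morphism
\[
\psi: A^K \rarr S \dopgleich \prod_{t=1}^n \Mat_{n_t}(K)
\]
whose kernel $J \dopgleich \Ker(\psi) = \Jac(A^K)$ is nilpotent, say $J^N = 0$. Let $U \dopgleich \mrm{Gen}_A(\psi)$; by Proposition \ref{surjective_morphism_open_gen} this is an open generic neighborhood of $\Spec(R)$. I claim $U \subs \mrm{SplGen}(A)$, which will finish the proof.

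So fix $\fp \in U$. By definition of $U$ the morphism $\psi$ restricts to a surjective $R_\fp$-algebra morphism $\psi_\fp: A_\fp \rarr \prod_t \Mat_{n_t}(R_\fp)$ whose kernel is $J_\fp \dopgleich A_\fp \cap J = \mrm{con}_{A_\fp}^K(J)$. Clearly $J_\fp^N = 0$ since $J_\fp \subs J$. Tensoring the exact sequence $0 \rarr J_\fp \rarr A_\fp \rarr \prod_t \Mat_{n_t}(R_\fp) \rarr 0$ with $\rk(\fp)$ over $R_\fp$ yields (by right-exactness, as in \S\ref{submodule_stuff}) a surjective $\rk(\fp)$-algebra morphism
\[
\bar{\psi}_\fp : A(\fp) \rarr \prod_{t=1}^n \Mat_{n_t}(\rk(\fp))
\]
with kernel $\tilde{J}_\fp \dopgleich \mrm{ext}_{A_\fp}^{\rk(\fp)}(J_\fp)$. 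The target is split semisimple, and $\tilde{J}_\fp$ remains nilpotent (an $N$-fold product of images of elements of $J_\fp$ vanishes, so $\tilde{J}_\fp^N = 0$ by $\rk(\fp)$-bilinear expansion). Therefore Lemma \ref{algebra_split_nilp_kernel} applies once more to conclude that $A(\fp)$ splits, i.e., $\fp \in \mrm{SplGen}(A)$.

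The argument is almost entirely formal once one has Proposition \ref{surjective_morphism_open_gen} in hand; the only subtlety is making sure that nilpotency of $\Ker(\psi)$ is inherited by $\Ker(\bar{\psi}_\fp)$ after contracting to $A_\fp$ and then extending to $A(\fp)$, and I expect this to be the main point to check carefully, but it reduces to the immediate bound $\tilde{J}_\fp^N = 0$.
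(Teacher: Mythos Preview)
Your proof is correct and follows essentially the same route as the paper: pick a surjection $\psi:A^K\to\prod_t\Mat_{n_t}(K)$ with nilpotent kernel via Lemma \ref{algebra_split_nilp_kernel}, use Proposition \ref{surjective_morphism_open_gen} to get the open generic neighborhood $\mrm{Gen}_A(\psi)$, restrict and reduce modulo $\fp$, and apply Lemma \ref{algebra_split_nilp_kernel} again to conclude that $A(\fp)$ splits. The only cosmetic difference is that the paper phrases the containment for the union $\bigcup_\psi \mrm{Gen}_A(\psi)$ over all such $\psi$ (which it later reuses in Lemma \ref{jacgen}), whereas you work with a single $\psi$; either suffices.
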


\begin{proof}
We are going to show that the set $
\bigcup_{\psi} \mrm{Gen}_A(\psi)$, where $\psi$ runs over all surjective $K$-algebra morphisms $A^K \rarr S$ with nilpotent kernel into semisimple $K$-algebras, is contained in $\mrm{SplGen}(A)$. Since $A^K$ splits, such morphisms exist by Lemma \ref{algebra_split_nilp_kernel}, and since each $\mrm{Gen}_A(\psi)$ is a generic neighborhood by Proposition \ref{surjective_morphism_open_gen}, this will show that $\mrm{SplGen}(A)$ is a generic neighborhood. So, let $\psi$ be such a morphism and let $\fp \in \mrm{Gen}_A(\psi)$. This means by definition that $\psi$ restricts to a surjective morphism $\phi \dopgleich \psi|_{A_\fp}: A_\fp \rarr \prod_{i=1}^n \Mat_{n_i}(R_\fp)$. Since $\Ker(\psi) = \Jac(A^K) \unlhd A^K$ is nilpotent, also 
\[
\Ker(\phi) = A_\fp \cap \Jac(A^K) = \mrm{con}_{A_\fp}^K(\Jac(A^K))\unlhd A_\fp
\]
 is nilpotent. 
The morphism $\ol{\phi}: A(\fp) \rarr \prod_{i=1}^n \Mat_{n_i}(\rk(\fp))$ induced by $\phi$ by reducing modulo $\fp_\fp$ is actually the morphism $(\theta_{A_{\fp}}^{\fp_\fp})^*\phi$. It is clearly surjective and using \S\ref{submodule_stuff} we see that
\[
\Ker(\ol{\phi}) = \Ker((\theta_{A_{\fp}}^{\fp_\fp})^*\phi) = \mrm{ext}_{A_\fp}^{\theta_{A_\fp}^{\fp_\fp}}( \Ker(\phi)) = \mrm{ext}_{A_\fp}^{\rk(\fp)} \circ \mrm{con}_{A_\fp}^K(\Jac(A^K)) = \Jac(A^K)(\fp) \;, 
\]
i.e., $\Ker(\ol{\phi})$ is just the image of $\Ker(\phi)$ in $A(\fp)$. As $\Ker(\phi)$ is nilpotent, we thus conclude that also $\Ker(\ol{\phi})$ is nilpotent. An application of Lemma \ref{algebra_split_nilp_kernel} now shows that 
\begin{equation} \label{specialized_jac}
\Jac(A^K)(\fp) = \Jac(A(\fp))
\end{equation}
and that $A(\fp)$ splits. Hence, $\fp \in \mrm{SplGen}(A)$ and therefore $\mrm{Gen}_A(\psi) \subs \mrm{SplGen}(A)$. 
\end{proof}

\section{Generic behavior of the Jacobson radical} \label{jacobson_behavior}

Now, we come to the set $\ul{\mrm{JacDim}}\mrm{Gen}(A)$ and show that it is a neighborhood of the generic point if $A^K$ splits and the base ring is noetherian. \\

We will first consider another but similar set, namely
\[
\mrm{JacGen}(A) \dopgleich \lbrace \fp \in \Spec(R) \mid \Jac(A(\fp)) = \Jac(A^K)(\fp) \rbrace \;.
\]
 Recall that we defined and studied $\Jac(A^K)(\fp)$ already in \S\ref{tits_section}.

\begin{lemma} \label{jacgen}
If $A^K$ splits, then $\mrm{JacGen}(A)$ is a neighborhood of the generic point.
\end{lemma}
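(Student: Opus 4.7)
The plan is to essentially extract and repackage the argument already present inside the proof of Theorem \ref{split_locus_almost_open}: the key identity \eqref{specialized_jac} there establishes exactly what we need here, so the work is really to isolate that step.

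First I would note that by Lemma \ref{jac_spec_dim_lemma} the inclusion $\Jac(A^K)(\fp) \subseteq \Jac(A(\fp))$ holds for every $\fp \in \Spec(R)$, so proving the equality generically amounts to producing the reverse inclusion on an open generic neighborhood. Since $A^K$ splits, Lemma \ref{algebra_split_nilp_kernel} furnishes a surjective $K$-algebra morphism $\psi \colon A^K \twoheadrightarrow \prod_{i=1}^n \Mat_{n_i}(K)$ whose kernel is $\Jac(A^K)$, in particular nilpotent. By Proposition \ref{surjective_morphism_open_gen} the set $\mrm{Gen}_A(\psi)$ is an open generic neighborhood of $\Spec(R)$, so it suffices to show the inclusion
\[
\mrm{Gen}_A(\psi) \subseteq \mrm{JacGen}(A).
\]

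To do this, fix $\fp \in \mrm{Gen}_A(\psi)$. Then $\psi$ restricts to a surjective morphism $\phi \colon A_\fp \twoheadrightarrow \prod_{i=1}^n \Mat_{n_i}(R_\fp)$ with kernel $\mrm{con}_{A_\fp}^K(\Jac(A^K))$, which is nilpotent because $\Jac(A^K)$ is. Reducing modulo $\fp_\fp$ and using the scalar extension formalism from \S\ref{submodule_stuff} (as in the proof of Theorem \ref{split_locus_almost_open}), I obtain a surjective morphism $\overline{\phi} \colon A(\fp) \twoheadrightarrow \prod_{i=1}^n \Mat_{n_i}(\rk(\fp))$ with
\[
\Ker(\overline{\phi}) = \mrm{ext}_{A_\fp}^{\rk(\fp)} \circ \mrm{con}_{A_\fp}^K (\Jac(A^K)) = \Jac(A^K)(\fp),
\]
which is nilpotent as the image of a nilpotent ideal. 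Since the target is split semisimple, Lemma \ref{algebra_split_nilp_kernel} identifies this kernel with $\Jac(A(\fp))$, yielding $\Jac(A^K)(\fp) = \Jac(A(\fp))$, i.e., $\fp \in \mrm{JacGen}(A)$.

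There is really no substantive obstacle here: the only point to be careful about is the identification of $\Ker(\overline{\phi})$ with $\Jac(A^K)(\fp)$, which relies on the commutation of kernels with right-exact scalar extension recalled in \S\ref{submodule_stuff}. Everything else is a direct citation of previously established results, and in fact the same chain of equalities appears already inside the proof of Theorem \ref{split_locus_almost_open} as equation \eqref{specialized_jac}; the present lemma simply recognizes that that argument proves a genericity statement for $\mrm{JacGen}(A)$ in its own right, independently of the splitting conclusion.
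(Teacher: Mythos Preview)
Your proof is correct and follows essentially the same approach as the paper: both extract equation \eqref{specialized_jac} from the proof of Theorem \ref{split_locus_almost_open} to show that $\mrm{Gen}_A(\psi) \subseteq \mrm{JacGen}(A)$ for a surjective morphism $\psi$ onto a split semisimple algebra with nilpotent kernel. The only cosmetic difference is that the paper phrases the containing neighborhood as the union $\bigcup_\psi \mrm{Gen}_A(\psi)$ over all such $\psi$, whereas you work with a single $\psi$ obtained from Lemma \ref{algebra_split_nilp_kernel}; either suffices.
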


\begin{proof}
Equation (\ref{specialized_jac}) in the proof of Theorem \ref{split_locus_almost_open}  shows that if $\fp$ is contained in the generic neighborhood $\bigcup_{\psi} \mrm{Gen}_A(\psi)$, where $\psi$ runs over all surjective $K$-algebra morphisms $A^K \rarr S$ with nilpotent kernel into split semisimple $K$-algebras, then 
\[
\Jac(A(\fp)) = \Ker(\psi)(\fp) = \Jac(A^K)(\fp) \;.
\]
Hence, this generic neighborhood is contained in $\mrm{JacGen}(A)$, proving that this set is a generic neighborhood.
\end{proof}

\begin{theorem} \label{jac_dim_spec_eq_open}
If $R$ is noetherian and $A^K$ splits, then 
\[
\mrm{JacGen}(A) \cap \mrm{SplGen}(A) \subs \ul{\mrm{JacDim}}\mrm{Gen}(A) \;.
\]
In particular, $\ul{\mrm{JacDim}}\mrm{Gen}(A)$ is a neighborhood of the generic point.
\end{theorem}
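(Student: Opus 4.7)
The plan is as follows. The ``in particular'' clause is immediate from the stated inclusion together with Lemma~\ref{jacgen} and Theorem~\ref{split_locus_almost_open}, since the intersection of two neighborhoods of the generic point is again one. For the main inclusion, fix $\fp \in \mrm{JacGen}(A) \cap \mrm{SplGen}(A)$; the case $\fp = 0$ is trivial (then $A(\fp) = A^K$), so assume $\fp \neq 0$. Since $R$ is noetherian, Grothendieck's Theorem~\ref{discrete_exists} produces a discrete valuation ring $(\sO,\fm)$ between $R$ and $K$ with $R \cap \fm = \fp$, and since $A(\fp)$ splits, Lemma~\ref{split_gates_exist} makes $\sO$ a (discrete) $A$-gate in $\fp$. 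Set $d \dopgleich \dim_K \Jac(A^K)$ and introduce the $A^\sO$-submodule $M' \dopgleich \mrm{con}_{A^\sO}^K \Jac(A^K)$ of $A^\sO$ and its $A_\fp$-submodule $M \dopgleich \mrm{con}_{A_\fp}^K \Jac(A^K) = A_\fp \cap M'$. Since $\sO$ is a DVR, $M'$ is $\sO$-free of rank $d$ and $A^\sO/M'$ is $\sO$-free; consequently $M'/\fm M'$ embeds into $A^\sO(\fm) = A^\sO/\fm A^\sO$ with image $\Jac(A^K)(\fm)$ of $\rk(\fm)$-dimension $d$.

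The crux is to upgrade the inclusion $\Jac(A^K)(\fm) \subs \Jac(A^\sO(\fm))$ supplied by Lemma~\ref{jac_spec_dim_lemma} to an equality. Since $A(\fp)$ splits, \cite[7.9(i)]{CR-Methods-1} (as in the proof of Theorem~\ref{tits_deformation_geck}) gives $\Jac(A^\sO(\fm)) = \mrm{ext}_{A(\fp)}^{\rk(\fm)} \Jac(A(\fp))$, and the hypothesis $\Jac(A(\fp)) = \Jac(A^K)(\fp)$ then identifies this, as a subspace of $A^\sO(\fm) = \rk(\fm) \otimes_{R_\fp} A_\fp$, with $\rk(\fm) \otimes_{R_\fp} M$ (the natural map $\rk(\fm) \otimes_{R_\fp} M \to A^\sO(\fm)$ being injective, since it arises by scalar extension along the flat field extension $\rk(\fp) \hookrightarrow \rk(\fm)$ from the inclusion $\Jac(A^K)(\fp) \hookrightarrow A(\fp)$). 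Since $M \subs M'$ in $A^\sO$, this subspace is contained in the image of $\rk(\fm) \otimes_\sO M' = M'/\fm M' = \Jac(A^K)(\fm)$, giving the reverse inclusion $\Jac(A^\sO(\fm)) \subs \Jac(A^K)(\fm)$. Equality now follows, and therefore $\dim_{\rk(\fm)} \Jac(A^\sO(\fm)) = d$.

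To conclude, splitting of $A(\fp)$ yields
\[
\dim_{\rk(\fp)} \Jac(A(\fp)) \;=\; \dim_{\rk(\fm)} \Jac(A(\fp))^{\rk(\fm)} \;=\; \dim_{\rk(\fm)} \Jac(A^\sO(\fm)) \;=\; d \;=\; \dim_K \Jac(A^K),
\]
placing $\fp$ in $\ul{\mrm{JacDim}}\mrm{Gen}(A)$. I expect the main obstacle to be the reverse inclusion $\Jac(A^\sO(\fm)) \subs \Jac(A^K)(\fm)$ of the second paragraph: it is the only place where both hypotheses (splitting of $A(\fp)$ and $\fp \in \mrm{JacGen}(A)$) combine, and it relies substantively on $\sO$ being a \emph{discrete} valuation ring, since the argument requires $A^\sO/M'$ to be $\sO$-free so that $M'/\fm M'$ is realised as a genuine subspace of $A^\sO(\fm)$.
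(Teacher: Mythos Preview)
Your proof is correct and follows essentially the same approach as the paper: pick a discrete $A$-gate $(\sO,\fm)$ in $\fp$, establish the equality $\Jac(A^K)(\fm)=\Jac(A^\sO(\fm))$, and read off the dimension via the $\sO$-freeness of $M'=\mrm{con}_{A^\sO}^K\Jac(A^K)$. The only organizational difference is in how that equality is obtained: the paper chains the $\mrm{ext}/\mrm{con}$ identities directly, using $M'=\mrm{ext}_{A_\fp}^\sO M$ (from \S\ref{submodule_stuff}) to rewrite $\Jac(A^K)(\fm)$ as $\mrm{ext}_{A(\fp)}^{\rk(\fm)}\Jac(A(\fp))=\Jac(A^\sO(\fm))$ in one sweep, whereas you prove the two inclusions separately, taking one from Lemma~\ref{jac_spec_dim_lemma} and the other from the elementary inclusion $M\subs M'$.
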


\begin{proof}
Let $0 \neq \fp \in \mrm{JacGen}(A) \cap \mrm{SplGen}(A)$. Because of \S\ref{noeth_case} there is a discrete $A$-gate $(\sO,\fm)$ in $\fp$. Since $\mrm{con}_{A_\fp}^\sO \circ \mrm{con}_{A^\sO}^K \Jac(A^K) = \mrm{con}_{A_\fp}^K \Jac(A^K)$, we  have
\[
\mrm{con}_{A^\sO}^K \Jac(A^K) = \mrm{ext}_{A_\fp}^\sO \circ \mrm{con}_{A_\fp}^\sO \circ \mrm{con}_{A^\sO}^K \Jac(A^K) = \mrm{ext}_{A_\fp}^\sO \circ \mrm{con}_{A_\fp}^K \Jac(A^K)
\]
by \S\ref{submodule_stuff}. Since $\fp \in \mrm{JacGen}(A)$, we have
\[
\Jac(A^K)(\fp) = \mrm{ext}_{A_\fp}^{\rk(\fp)} \circ \mrm{con}_{A_\fp}^{\rk(\fp)} \Jac(A^K) = \Jac(A(\fp))
\] 
and we therefore get
\begin{align*}
\mrm{ext}_{A^\sO}^{\rk(\fm)} \circ \mrm{con}_{A^\sO}^K \Jac(A^K) & = \mrm{ext}_{A^\sO}^{\rk(\fm)} \circ \mrm{ext}_{A_\fp}^\sO \circ \mrm{con}_{A_\fp}^K \Jac(A^K) = \mrm{ext}_{A_\fp}^{\rk(\fm)} \circ \mrm{con}_{A_\fp}^K \Jac(A^K) \\
& = \mrm{ext}_{A(\fp)}^{\rk(\fm)} \circ \mrm{ext}_{A_\fp}^{\rk(\fp)} \circ \mrm{con}_{A_\fp}^K \Jac(A^K) = \mrm{ext}_{A(\fp)}^{\rk(\fm)} \Jac(A(\fp)) \\
& = \Jac(A^\sO(\fm)) \;.
\end{align*}
As in the proof of Theorem \ref{tits_deformation_geck} we used that $\mrm{ext}_{A(\fp)}^{\rk(\fm)} \Jac(A(\fp)) = \Jac(A^\sO(\fm))$ since $A(\fp)$ splits.  Since $A^\sO$ is $\sO$-torsion free, also $\mrm{con}_{A^\sO}^K \Jac(A^K) \leq A^\sO$ is $\sO$-torsion free and thus free since $\sO$ is a discrete valuation ring. Moreover, by \S\ref{submodule_stuff} we have
\[
\mrm{ext}_{A^\sO}^K \circ \mrm{con}_{A^\sO}^K(\Jac(A^K)) =  \Jac(A^K)
\]
and therefore $\dim_\sO \mrm{con}_{A^\sO}^K(\Jac(A^K)) = \dim_K \Jac(A^K)$. Hence,
\begin{align*}
\dim_K \Jac(A^K) & = \dim_{\sO} \mrm{con}_{A^\sO}^K \Jac(A^K) = \mrm{dim}_{\rk(\fm)} \ \mrm{ext}_{A^\sO}^{\rk(\fm)} \circ \mrm{con}_{A^\sO}^K \Jac(A^K) \\
& = \mrm{dim}_{\rk(\fm)} \Jac(A^\sO(\fm)) = \dim_{\rk(\fp)} \Jac(A(\fp)) \;.
\end{align*}
This shows that $\fp \in \ul{\mrm{JacDim}}\mrm{Gen}(A)$ and so $\ul{\mrm{JacDim}}\mrm{Gen}(A) \cap \mrm{SplGen}(A)$ is a generic neighborhood. 
\end{proof}

This theorem leads to the proof that decomposition maps are generically trivial.

\begin{proof}[{Proof} of Theorem \ref{dec_gen_trivial}]
If $R$ is noetherian and $A^K$ splits, then we have seen in Corollary \ref{decgen_cap_spl_eq_jacdimgen_cap_spl} that
\[
\mrm{DecGen}(A) \cap \mrm{SplGen}(A) = \ul{\mrm{JacDim}}\mrm{Gen}(A) \cap \mrm{SplGen}(A) \;.
\]
Theorem \ref{jac_dim_spec_eq_open} shows that $\ul{\mrm{JacDim}}\mrm{Gen}(A)$ and thus its intersection with $\mrm{SplGen}(A)$ is a neighborhood of the generic point. This intersection is contained in $\mrm{DecGen}(A)$ and so it is a neighborhood of the generic point, too.
\end{proof}

\section{Proving openness} \label{generic_representation_theory}

Now, we turn to the last part, namely proving that $\mrm{DecGen}(A)$ is open in case $R$ is noetherian and $A$ has split fibers. This turned out to be an intricate problem—even with the connection to the Jacobson radical. We solve it by recursively using the genericity of $\ul{\mrm{JacDim}}\mrm{Gen}(A)$ to show that this set is ind-constructible (in the sense of Grothendieck), and then show that it is stable under generization to deduce openness (which again relies on a theorem by Grothendieck). To make this method precise and potentially applicable to other situations we feel that it is best to formalize the notion of ``properties'' here. From these general considerations we obtain the openness of $\mrm{DecGen}(A)$ in \S\ref{lafin}.

\subsection{Specialization properties on classes of algebras}

\begin{definition}
Let $\fA$ be a class of algebras over integral domains. We write $A_{/R}$ for $A \in \fA$ with base ring $R$. Define the class
\[
\wt{\fA} \dopgleich \lbrace (A,  \fp) \mid A_{/R} \in \fA, \; \fp \in \Spec(R)  \rbrace  \;.
\]
A \word{specialization property} $\sP$ on $\fA$ is a property on the class $\wt{\fA}$. Formally, $\sP$ is a subclass of $\wt{\fA}$, and we say that $\sP(A,\fp)$ \word{holds} if $(A,\fp) \in \sP$. For $A_{/R} \in \fA$ we then define
\[
\sP\mrm{Gen}(A) \dopgleich \lbrace \fp \in \Spec(R) \mid \sP(A,\fp) \tn{ holds} \rbrace 
\]
and denote by $\sP\mrm{Ex}(A)$ the complement of $\sP\mrm{Gen}(A)$ in $\Spec(R)$.
\end{definition}

For any two specialization properties $\sP$ and $\sP'$ on $\fA$ we can form their intersection $\sP \cap \sP'$ and union $\sP \cup \sP'$.
We will essentially only consider the classes $\fF$ and $\fF_\mrm{n}$ of finite free algebras over integral domains and of finite free algebras over noetherian integral domains, respectively, and the following subclasses: the class $\fF^{\mrm{gspl}}_\star$ of such algebras having split generic fiber and the class $\fF^{\mrm{spl}}_\star$ of such algebras having split fibers. When we write $\star$ in the notation, it means that we can choose any of the classes just defined. 

\begin{example} \label{semisimplicity_property}
The following are examples of specialization properties on the class $\fF$. Here, we always denote the fraction field of the base ring of the algebra $A$ by $K$.

\begin{enum_proof}
\item $\mrm{Ss}(A,\fp)$ holds if and only if $A(\fp)$ is semisimple.

\item $\mrm{Spl}(A,\fp)$ holds if and only if $A(\fp)$ splits. 

\item $\ul{\mrm{JacDim}}(A,\fp)$ holds if and only if $\dim_K \Jac(A^K) = \dim_{\rk(\fp)} A(\fp)$.

\item $\mrm{Jac}(A,\fp)$ holds if and only if $\Jac(A(\fp)) = \Jac(A^K)(\fp)$.

\item $\mrm{Dec}(A,\fp)$ holds if and only if there is an $A$-gate in $\fp$ and $\rd_A^{\fp,\sO}:\rG_0(A^K) \rarr \rG_0(A(\fp))$ is trivial for any $A$-gate $\sO$ in $\fp$.
\end{enum_proof}

\end{example}

Note that in all the examples above the corresponding set $\sP\mrm{Gen}(A)$ coincides with the corresponding one defined before. %
We now want to study the topology of these sets in general and to do this we introduce some terminology. 

\begin{definition}
We say that a specialization property $\sP$ is \word{generic} if $(0) \in \sP\mrm{Gen}(A)$ for $A_{/R} \in \fA$ implies that $\sP\mrm{Gen}(A)$ is a \word{generic neighborhood} in $\mrm{Spec}(R)$, i.e., it contains an open neighborhood of the generic point $(0)$ of $\Spec(R)$. 
\end{definition}

A stronger version of \textit{generic} is that $\sP$ is \word{open}, meaning that $(0) \in \sP\mrm{Gen}(A)$ for $A_{/R} \in \fA$ implies that $\sP\mrm{Gen}(A)$ is open. Clearly, open properties are generic. In a similar fashion we can define \textit{closed} or \textit{constructible} properties of course. Open properties are nice since $\sP\mrm{Ex}(A)$ is then closed and can thus be described as the zero locus of an ideal $\fd_\sP(A)$ of the base ring $R$. We call this ideal the \word{$\sP$-discriminant} of $A$. What we have proven so far can be summarized as follows.

\begin{example} \label{example_generic_summary}
The following holds:
\begin{enum_proof}
\item $\mrm{Spl}$ is generic on $\fF$.
\item $\mrm{Jac}$ is generic on $\fF^{\mrm{gspl}}$.
\item $\ul{\mrm{JacDim}}$ and $\mrm{Dec}$ are generic on $\fF^{\mrm{gspl}}_{\mrm{n}}$.
\item $\ul{\mrm{JacDim}} = \mrm{Dec}$ on $\fF_{\mrm{n}}^{\mrm{spl}}$.
\end{enum_proof}
\end{example}

Cline–Parshall–\allowbreak Scott \cite{Cline:1999aa} also study several properties on classes of algebras and show that they are generic (openness is not considered). The following example from \cite{Cline:1999aa} illustrates that even ``honest algebraic'' properties might not be generic in general and so cannot be described by a discriminant as above.

\begin{example}
The semisimplicity property $\mrm{Ss}$ defined in Example \ref{semisimplicity_property} is \textit{not} generic on $\fF$. Note on the other hand, however, that if $A$ is a finite free algebra over a noetherian integral domain such that $A^K$ is split semisimple, then the semisimplicity locus of $A$ is precisely $\ul{\mrm{JacDim}}\mrm{Gen}(A)$, and this is a generic neighborhood. Hence, $\mrm{Ss}$ is in fact generic on $\fF_{\mrm{n}}^{\mrm{gspl}}$.
\end{example}

\subsection{Ind-constructible specialization properties}

Proving that a specialization property $\sP$ is generic is always the starting point. In this section we describe a condition on a generic specialization property $\sP$ which implies that $\sP$ is indeed open. The central concept in this approach is the notion of \textit{ind-constructibility} introduced in \cite[Chapitre IV, 1.9.4]{Grothendieck.A64Elements-de-geometri}. This is a weakening of constructibility and lies between generic and open. Let us recall its definition. First, a subset $E$ of a topological space $X$ is called \word{constructible} if it is a finite union of locally closed subsets of $X$, which in turn are the intersections of an open and a closed subset of $X$. It is called \word{locally constructible} if for every $x \in X$ there is an open neighborhood $U$ of $x$ in $X$ such that $E \cap U$ is constructible in $U$ (see \cite[Chapitre 0, 9.1.11]{Gro-EGA-3-1}). 

\begin{definition}
A subset $E$ of a topological space $X$ is called \word{pro-constructible} (\word{ind-constructi\-ble}) if for every $x \in X$ there is an open neighborhood $U$ of $x$ in $X$ such that $E \cap U$ is an intersection (a union) of locally constructible subsets of $U$ (see \cite[Chapitre IV, 1.9.4]{Grothendieck.A64Elements-de-geometri}). 
\end{definition}

Every intersection (union) of locally constructible subsets of $X$ is pro-constructible (ind-constructible). If $X$ is the underlying topological space of a noetherian scheme, then the ind-constructible subsets are precisely the unions of locally closed subsets (see \cite[Chapitre IV, 1.9.6]{Grothendieck.A64Elements-de-geometri}). Furthermore, if $X$ is the underlying topological space of a (not necessarily noetherian) scheme, then the ind-constructible subsets are the open sets of a topology on $X$ which is called the \word{constructible topology} and which is finer than the Zariski topology on $X$ (see \cite[Chapitre IV, 1.9.13]{Grothendieck.A64Elements-de-geometri}). Moreover, it follows from \cite[Chapitre IV, Théorème 1.10.1]{Grothendieck.A64Elements-de-geometri} that an ind-constructible subset $E$ of $X$ is open if and only if it is \word{stable under generization}, i.e., whenever $x \in X$ and $y \in  E$ with $y \in \ol{\lbrace x \rbrace}$, also $x \in E$. \\

If $\sP$ is an ind-constructible specialization property, then $\sP\mrm{Ex}(A)$ is pro-constructible and $\sP\mrm{Gen}(A)$ is open in the constructible topology on $\Spec(R)$. If $(0) \in \sP\mrm{Gen}(A)$, then it follows from \cite[Chapitre IV, 1.9.10]{Grothendieck.A64Elements-de-geometri} that $\sP\mrm{Gen}(A) \cap \ol{\lbrace (0) \rbrace} = \sP\mrm{Gen}(A)$ is a neighborhood of $(0)$ in $\Spec(R)$. Hence, ind-constructible properties are generic. The precise condition for an ind-constructible property to be open follows from what we discussed above and we record this fact once more.

\begin{theorem} \label{ind_constructible_stable_open}
An ind-constructible specialization property $\sP$ is open if and only if it is stable under generization, i.e., $(0) \in \sP\mrm{Gen}(A)$ for $A_{/R} \in \fA$ implies that if $\fq \in \sP\mrm{Gen}(A)$ and $\fp \in \Spec(R)$ with $\fq \sups \fp$, also $\fp \in \sP\mrm{Gen}(A)$. 
\end{theorem}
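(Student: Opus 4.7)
The plan is to reduce the theorem to the pointwise topological statement the author already invoked immediately above, namely that for the underlying topological space of a scheme, an ind-constructible subset is Zariski open if and only if it is stable under generization (\cite[Chapitre IV, Théorème 1.10.1]{Grothendieck.A64Elements-de-geometri}). Given this, the argument is essentially bookkeeping about how the definitions of an \emph{open} specialization property and a \emph{generization-stable} specialization property quantify over $A_{/R} \in \fA$ with $(0) \in \sP\mrm{Gen}(A)$.

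For the $(\Rightarrow)$ direction I would argue as follows. Assume $\sP$ is open and let $A_{/R} \in \fA$ with $(0) \in \sP\mrm{Gen}(A)$; then by definition $\sP\mrm{Gen}(A)$ is Zariski open in $\Spec(R)$. Any open subset of $\Spec(R)$ is stable under generization: if $\fq \in \sP\mrm{Gen}(A)$ and $\fq \in \ol{\{\fp\}}$, then any open set containing $\fq$ must contain $\fp$ as well, because the complement of an open set is closed and hence contains the closure of each of its points. This yields exactly the generization-stability condition stated in the theorem.

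For the $(\Leftarrow)$ direction I would fix $A_{/R} \in \fA$ with $(0) \in \sP\mrm{Gen}(A)$. By hypothesis $\sP$ is ind-constructible, so $\sP\mrm{Gen}(A)$ is an ind-constructible subset of $\Spec(R)$. By the generization-stability assumption applied to this particular $A$, the set $\sP\mrm{Gen}(A)$ is stable under generization in the topological sense: whenever $\fq \in \sP\mrm{Gen}(A)$ and $\fp \subs \fq$, one has $\fp \in \sP\mrm{Gen}(A)$, which is precisely the condition that $\fq \in \ol{\{\fp\}}$ implies $\fp \in \sP\mrm{Gen}(A)$. The aforementioned consequence of \cite[Chapitre IV, Théorème 1.10.1]{Grothendieck.A64Elements-de-geometri} then forces $\sP\mrm{Gen}(A)$ to be open in the Zariski topology on $\Spec(R)$, which means $\sP$ is open as a specialization property.

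There is no real obstacle here beyond making sure the quantifiers in the definitions of \emph{open} and \emph{stable under generization} for specialization properties line up, and citing the Grothendieck result correctly; the entire proof is one line of topology applied to each $A$ separately.
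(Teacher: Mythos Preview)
Your proposal is correct and matches the paper's approach exactly: the paper does not give a separate proof but simply records this statement as an immediate consequence of \cite[Chapitre IV, Théorème 1.10.1]{Grothendieck.A64Elements-de-geometri}, applied to each $\sP\mrm{Gen}(A)\subs\Spec(R)$ individually, which is precisely what you spell out.
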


We are going to provide a condition ensuring that a property is ind-constructible. To this end the property has to be stable under restrictions in the following sense.

\begin{definition} \label{restriction_stable_property}
We say that a specialization property $\sP$ on $\fA$ is \word{restriction stable} if whenever $A_{/R} \in \fA$ with $(0) \in \sP\mrm{Gen}(A)$ and $\fp \in \sP\mrm{Gen}(A)$, then 
\begin{enum_thm}
\item \label{restriction_stable_property:class_stable} the $(R/\fp)$-algebra $A|_\fp \dopgleich A/\fp A$ is also contained in $\fA$,
\item $\fp/\fp \in \sP\mrm{Gen}(A|_\fp)$,
\item if $\fq/\fp \in \sP\mrm{Gen}(A|_\fp)$ for some $\fq \in \Spec(R)$ with $\fq \sups \fp$, then $\fq \in \sP\mrm{Gen}(A)$. 
\end{enum_thm}
\end{definition}

Note that the first condition is actually a stability condition on the class $\fA$ and its base rings. It certainly holds if $\fA$ itself is \word{restriction stable} meaning that condition \ref{restriction_stable_property}\ref{restriction_stable_property:class_stable} holds for all $A_{/R} \in \fA$ and $\fp \in \Spec(R)$.

\begin{example}
The classes $\fF_\star$ and $\fF^{\mrm{spl}}_\star$ are restriction stable. The class $\fF^{\mrm{gspl}}_\star$ on the other hand is \textit{not} restriction stable. For any $\fq \sups \fp$ we have a canonical isomorphism $A|_\fp(\fq/\fp) \cong A(\fq)$ of $\rk(\fq)$-algebras. This immediately implies for example that the properties $\mrm{Spl}$ and $\ul{\mrm{JacDim}}$ are restriction stable on $\fF^\star_\star$. It is also not hard to see that $\mrm{Jac}$ is restriction stable on $\fF^\star_\star$. %
\end{example}

\begin{proposition} \label{ind_constructible_property}
A restriction stable generic specialization property is ind-constructible.
\end{proposition}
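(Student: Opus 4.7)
My plan is to show, under the implicit assumption $(0) \in \sP\mrm{Gen}(A)$, that $\sP\mrm{Gen}(A)$ is a union of locally closed subsets of $\Spec(R)$. Since every locally closed subset is constructible and hence locally constructible, this will immediately yield ind-constructibility via the characterization recalled in the paragraph just before the proposition (``every intersection/union of locally constructible subsets is pro/ind-constructible'').

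The construction is pointwise. Fix $\fp \in \sP\mrm{Gen}(A)$. Clauses (i) and (ii) of restriction stability applied to $(A,\fp)$ guarantee $A|_\fp \in \fA$ and $(0) = \fp/\fp \in \sP\mrm{Gen}(A|_\fp)$. Applying the genericity of $\sP$ to $A|_\fp$ produces an open neighborhood $U_\fp$ of the generic point of $\Spec(R/\fp)$ with $U_\fp \subs \sP\mrm{Gen}(A|_\fp)$. For every $\fq \in \Spec(R)$ with $\fq \sups \fp$ and $\fq/\fp \in U_\fp$, clause (iii) of restriction stability—applicable because both $(0)$ and $\fp$ lie in $\sP\mrm{Gen}(A)$—yields $\fq \in \sP\mrm{Gen}(A)$.

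Identifying $\Spec(R/\fp)$ with the closed subset $\rV(\fp) \subs \Spec(R)$, the set $U_\fp$ becomes open in $\rV(\fp)$ and hence locally closed in $\Spec(R)$; by construction it contains $\fp$ and is contained in $\sP\mrm{Gen}(A)$. Taking the union over all $\fp$,
\[
\sP\mrm{Gen}(A) = \bigcup_{\fp \in \sP\mrm{Gen}(A)} U_\fp \;,
\]
exhibits $\sP\mrm{Gen}(A)$ as a union of locally closed (hence locally constructible) subsets of $\Spec(R)$, so it is ind-constructible.

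I do not anticipate a serious obstacle: the argument is a clean packaging of genericity—which manufactures an open stratum inside each closed set $\rV(\fp)$—with the transport clause (iii) of restriction stability, which guarantees that this stratum actually lies inside $\sP\mrm{Gen}(A)$. The only point needing care is verifying that clause (iii) is applicable at each step, but this is immediate from the standing hypothesis $(0) \in \sP\mrm{Gen}(A)$ together with the choice $\fp \in \sP\mrm{Gen}(A)$.
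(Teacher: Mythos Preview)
Your proposal is correct and follows essentially the same argument as the paper: for each $\fp \in \sP\mrm{Gen}(A)$ you use restriction stability to pass to $A|_\fp$, apply genericity there to obtain an open neighborhood $U_\fp$ of the generic point of $\rV(\fp)$ inside $\sP\mrm{Gen}(A|_\fp)$, transport it back into $\sP\mrm{Gen}(A)$ via clause (iii), and conclude that $\sP\mrm{Gen}(A) = \bigcup_\fp U_\fp$ is a union of locally closed subsets. The only cosmetic difference is that the paper writes out $U_\fp = \rV(\fp) \cap V_\fp$ explicitly, whereas you invoke the identification $\Spec(R/\fp) \cong \rV(\fp)$ directly.
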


\begin{proof}
Let $\sP$ be such a property on a class $\fA$ and let $A_{/R} \in \fA$. We show that $\sP\mrm{Gen}(A)$ is a union of locally closed subsets. Let $\fp \in \sP\mrm{Gen}(A)$. Note that $Z_\fp \dopgleich \Spec(R/\fp)$ is canonically homeomorphic to the closed subspace $\rV(\fp)$ in $\Spec(R)$. By assumption the generic point $\fp/\fp \in Z_\fp$ is contained in $\sP\mrm{Gen}(A|_\fp)$ and therefore $\sP\mrm{Gen}(A|_\fp)$ is a generic neighborhood in $Z_\fp$ since $\sP$ is a generic property. Let $U_\fp$ be an open neighborhood of the generic point $\fp/\fp$ of $\Spec(R/\fp)$ contained in $\sP\mrm{Gen}(A|_\fp)$. An element of $U_\fp$ can be written as $\fq/\fp$ for some $\fq \in \Spec(R)$ such that $\fq \sups \fp$. Since $U_\fp \subs \sP\mrm{Gen}(A|_\fp)$, we have $\fq/\fp \in \sP\mrm{Gen}(A|_\fp)$ and so $\fq \in \sP\mrm{Gen}(A)$ by assumption. Hence, $U_\fp$, considered as a subset of $\Spec(R)$, is contained in $\sP\mrm{Gen}(A)$. By definition of the subspace topology on $Z_\fp$, we can write $U_\fp = Z_\fp \cap V_\fp$ with an open subset $V_\fp$ of $\Spec(R)$. Hence, 
\[
\sP\mrm{Gen}(A) = \bigcup_{\fp \in \sP\mrm{Gen}(A)} U_\fp = \bigcup_{\fp \in \sP\mrm{Gen}(A)} Z_\fp \cap V_\fp \;,
\]
and this is a union of locally closed subsets. Hence $\sP\mrm{Gen}(A)$ is ind-constructible.
\end{proof}

We can now refine Example \ref{example_generic_summary} as follows.

\begin{example}
The following holds:
\begin{enum_proof}
\item $\mrm{Spl}$ is ind-constructible on $\fF$.
\item $\mrm{Jac}$ is ind-constructible on $\fF^{\mrm{gspl}}$.
\item $\ul{\mrm{JacDim}}$ is ind-constructible on $\fF^{\mrm{gspl}}_\mrm{n}$.
\item $\ul{\mrm{JacDim}} = \mrm{Dec}$ is ind-constructible on $\fF_{\mrm{n}}^{\mrm{spl}}$.
\end{enum_proof}
\end{example}

\begin{remark} \label{dec_composability}
On the class $\fF^{\mrm{spl}}_\mrm{n}$ we know that $\ul{\mrm{JacDim}} = \mrm{Dec}$ and so $\mrm{Dec}$ is restriction stable on this class. Without the connection to the Jacobson radical we would not have been able to deduce this (and for this reason we do not know about restriction stability of $\mrm{Dec}$ on larger classes than $\fF^{\mrm{spl}}_\mrm{n}$). The problem is the following. Restriction stability of $\mrm{Dec}$ on $\fF_\star^\star$ means that for any $A_{/R} \in \fF^\star_\star$ and $\fq \sups \fp$ such that $\rd_A^{\fp,\sO}:\rG_0(A^K) \rarr \rG_0(A(\fp))$ and $\rd_{A|_{\fp}}^{\fq,\sO'}:\rG_0(A(\fp)) \rarr \rG_0(A(\fq))$ exist and are trivial, also $\rd_A^{\fq,\sO''}:\rG_0(A^K) \rarr \rG_0(A(\fq))$ exists and is trivial. This seems to be difficult to prove when working directly with decomposition maps. If we would know that there are choices of gates $\sO,\sO',\sO''$ such that we can factorize 
\[
\rd_A^{\fq,\sO''} = \rd_{A|_\fp}^{\fq/\fp,\sO'} \circ \rd_A^{\fp,\sO} \;,
\]
then we would be able to deduce this. Even on $\fF^{\mrm{spl}}_\star$—where decomposition maps always exist so that this is not the central problem—such a factorization is so far only known in general by a theorem by Geck and Rouquier \cite{GR-Centers-Simple-Hecke} if both the base ring $R$ and $R/\fp$ are \textit{normal}—the latter being a quite restrictive assumption not allowing us to deduce this on $\fF^{\mrm{spl}}_\star$ in this way. 
\end{remark}

\subsection{Stratification defined by an open specialization property}

In the following we say that a specialization property $\sP$ on $\fA$ is \word{generically true} if $\sP(A,0)$ holds for all $A_{/R} \in \fA$, where $(0)$ is the generic point of $\Spec(R)$. This notion is adapted to properties like $\ul{\mrm{JacDim}}$ which compare the generic fiber with a special fiber (and are thus obviously generically true), and less useful for properties like $\mrm{Spl}$ which just consider a special fiber. In the following lemma we show that in the noetherian case we only have to study a property in finitely many fibers to know it for all fibers.

\begin{lemma} \label{stratification_lemma}
Let $\sP$ be an open and generically true specialization property on a restriction stable class $\fA$. If $A_{/R} \in \fA$ with $R$ noetherian, then there is a finite number of points $\xi_1,\ldots,\xi_n$ in $X \dopgleich \Spec(R)$ such that
\[
X = \bigcup_{i=1}^n \sP\mrm{Gen}(A|_{\xi_i}) \;.
\]
The subset $ \sP\mrm{Gen}(A|_{\xi_i})$ is locally closed in $X$ and $\xi_i$ is the generic point of its closure.
\end{lemma}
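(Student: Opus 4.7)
My plan is a noetherian induction on closed subsets of $X$. Let $\Sigma$ denote the class of closed subsets $V \subs X$ that cannot be written as a finite union $\bigcup_{i=1}^m \sP\mrm{Gen}(A|_{\xi_i})$ with each $\sP\mrm{Gen}(A|_{\xi_i})$ locally closed in $X$ and $\xi_i$ the generic point of its closure; I aim to show $\Sigma = \emptyset$. Since $R$ is noetherian the topological space $X$ is noetherian, so if $\Sigma$ were non-empty I would pick a minimal element $V$ and derive a contradiction. First I reduce to $V$ irreducible: by noetherianity $V$ has finitely many irreducible components $V_1,\dots,V_k$, and if none were in $\Sigma$ then each (and hence their union $V$) would admit a decomposition of the required form; so some $V_j \in \Sigma$, and minimality forces $V_j = V$. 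Write $V = \rV(\fp)$ and identify it with $\Spec(R/\fp)$ in the usual way.

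The key step then exploits all three hypotheses on $\sP$ and $\fA$ simultaneously. Restriction stability of $\fA$ ensures $A|_\fp \in \fA$, so generic truth of $\sP$ gives $\fp/\fp \in U \dopgleich \sP\mrm{Gen}(A|_\fp)$, and openness of $\sP$ makes $U$ an open subset of $V \cong \Spec(R/\fp)$. Writing $U = V \cap O$ for some open $O \subs X$ exhibits $U$ as locally closed in $X$, and since $U$ is a non-empty open subset of the irreducible closed set $V$ its closure in $X$ is all of $V$, with generic point $\fp = \xi_1$ as required. The complement $W \dopgleich V \setminus U$ is then a proper closed subset of $V$, hence a closed subset of $X$ strictly smaller than $V$. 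By minimality of $V$ in $\Sigma$, the set $W$ is not in $\Sigma$ and therefore admits a decomposition of the required form; combining that decomposition with $U$ yields one for $V = U \cup W$, contradicting $V \in \Sigma$. Hence $\Sigma = \emptyset$, and taking $V = X$ produces the claimed finite covering.

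The argument is essentially routine noetherian induction, so I do not expect a genuine obstacle; the one subtle point to flag is the interplay between the three hypotheses. Restriction stability is what legitimizes the recursive appeal to $A|_{\xi_i}$; generic truth of $\sP$ — rather than mere genericity as an open property of $A$ itself — is what guarantees that every irreducible closed stratum $\rV(\fp)$ is hit by $\sP\mrm{Gen}(A|_\fp)$ at its generic point; and openness of $\sP$ on that stratum translates, via the subspace topology, into local closedness of $\sP\mrm{Gen}(A|_\fp)$ in the ambient $X$. Once these three ingredients are combined the induction runs exactly as above, and no further assumption on $A$ or on $R$ beyond noetherianity is needed.
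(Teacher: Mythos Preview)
Your proof is correct and takes essentially the same approach as the paper: both arguments peel off the open piece $\sP\mrm{Gen}(A|_\fp)$ from each irreducible closed stratum and recurse on the closed complement, with termination guaranteed by noetherianity of $X$. The only difference is cosmetic—the paper phrases this as an explicit iterative construction (start at $(0)$, pass to the generic points of the complement, repeat) while you package the same recursion as a noetherian induction via a minimal counterexample.
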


\begin{proof}
We start with the generic point $\xi_1 \dopgleich (0)$ of $X$. Since $\sP$ is open and generically true, $X_1 \dopgleich \sP\mrm{Gen}(A)$ is open in $X$. Let $Z_1$ be the complement of $X_1$. This set is closed and we have $X = X_1 \cup Z_1$. Now, we continue with $Z_1$. Let $\xi_2,\ldots,\xi_{n_2}$ be the generic points of the closed complement $Z_1$ of $X_1$. As $\fA$ is restriction stable, the restrictions $A|_{\xi_i}$ are contained in $\fA$. Since $\sP$ is generically true, each set $X_{2,i} \dopgleich \sP\mrm{Gen}(A|_{\xi_i})$ is open in $Z_1$, and thus locally closed in $X$. The complement $Z_{2,i}$ of $X_{2,i}$ in $Z_1$ is closed in $Z_1$, thus closed in $X$, and we can write $Z_1 = \bigcup_{i=2}^{n_2} X_{2,i} \cup \bigcup_{i=2}^{n_2} Z_{2,i}$. We can now continue in the same way with the sets $Z_{2,i}$. As $X$ is noetherian, this process will eventually terminate.
\end{proof}

Clearly, if we always join all the sets $\sP\mrm{Gen}(A|_{\xi_i})$ in Lemma \ref{stratification_lemma} which have non-trivial intersection, we can write $X$ as a disjoint union, and this can be refined to an actual stratification.

\subsection{Monotone specialization invariants}

We finish our formal treatment of properties of algebras by a construction producing open and generically true properties.

\begin{lemma} \label{monotone_invariant}
A \word{monotone specialization invariant} on a restriction stable class $\fA$ of algebras over integral domains is a map $\sI: \wt{\fA} \rarr (S,\leq)$ to a poset such that the following holds for all $A_{/R} \in \fA$:
\begin{enum_thm}
\item $\sI(A,\fp) \leq \sI(A,0)$ for all $\fp \in \Spec(R)$,
\item $\sI(A|_\fp, \fq/\fp) = \sI(A,\fq)$ for all $\fq,\fp \in \Spec(R)$ with $\fq \sups \fp$.
\end{enum_thm}
If for such an invariant the \word{associated specialization property} $\ul{\sI}$ on $\fA$ with
\[
\ul{\sI}(A,\fp) \tn{ holds if and only if } \sI(A,\fp) = \sI(A,0)
\]
is generic, then it is already open and generically true. If in this case $A_{/R} \in \fA$ with $R$ noetherian, then $\Spec(R)$ can be stratified into finitely many strata such that $\sI$ is constant on each of them.
\end{lemma}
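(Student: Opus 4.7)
The plan is to reduce the openness claim to the machinery already developed in the section, namely Proposition \ref{ind_constructible_property} (genericity plus restriction stability implies ind-constructibility) and Theorem \ref{ind_constructible_stable_open} (an ind-constructible property is open iff it is stable under generization). The property $\ul{\sI}$ is generically true directly from the definition, since $\sI(A,0)=\sI(A,0)$ gives $\ul{\sI}(A,0)$ for every $A_{/R}\in\fA$. It then remains to establish two things about $\ul{\sI}$: restriction stability and stability under generization; both will follow mechanically from the axioms (i) and (ii) of a monotone specialization invariant. Finally the stratification assertion will be a direct application of Lemma \ref{stratification_lemma} together with axiom (ii).

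For restriction stability, suppose $A_{/R}\in\fA$ and $\fp\in\ul{\sI}\mrm{Gen}(A)$, so that $\sI(A,\fp)=\sI(A,0)$. Condition \ref{restriction_stable_property}\ref{restriction_stable_property:class_stable} holds because $\fA$ itself is restriction stable. Since the generic point of $\Spec(R/\fp)$ is $\fp/\fp$, the relation $\ul{\sI}(A|_\fp,\fp/\fp)$ is trivial. For the third clause, suppose $\fq\supseteq\fp$ with $\fq/\fp\in\ul{\sI}\mrm{Gen}(A|_\fp)$. Applying (ii) twice gives $\sI(A,\fq)=\sI(A|_\fp,\fq/\fp)=\sI(A|_\fp,\fp/\fp)=\sI(A,\fp)=\sI(A,0)$, hence $\fq\in\ul{\sI}\mrm{Gen}(A)$. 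Assuming $\ul{\sI}$ is generic, Proposition \ref{ind_constructible_property} then makes $\ul{\sI}$ ind-constructible. For generization stability, let $\fq\in\ul{\sI}\mrm{Gen}(A)$ and $\fp\subseteq\fq$. Axiom (i) applied to $A|_\fp$ together with (ii) yields the chain
\[
\sI(A,\fq)=\sI(A|_\fp,\fq/\fp)\le\sI(A|_\fp,\fp/\fp)=\sI(A,\fp)\le\sI(A,0)=\sI(A,\fq),
\]
forcing equality throughout, so $\fp\in\ul{\sI}\mrm{Gen}(A)$. Theorem \ref{ind_constructible_stable_open} now gives openness, and openness together with generic truth gives $(0)\in\ul{\sI}\mrm{Gen}(A)$ for every $A_{/R}\in\fA$, so Lemma \ref{stratification_lemma} applies.

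Concretely, in the noetherian case Lemma \ref{stratification_lemma} produces finitely many points $\xi_1,\dots,\xi_n\in X=\Spec(R)$ such that $X=\bigcup_{i=1}^n\ul{\sI}\mrm{Gen}(A|_{\xi_i})$, with each piece locally closed. On the $i$-th piece any point $\fq$ satisfies $\fq\supseteq\xi_i$ and $\sI(A|_{\xi_i},\fq/\xi_i)=\sI(A|_{\xi_i},0)$ by definition of $\ul{\sI}$; invoking (ii) a last time gives $\sI(A,\fq)=\sI(A,\xi_i)$, so $\sI$ is constant on each piece. Disjointifying the locally closed pieces (e.g., by replacing each piece inductively by its complement in the union of the preceding ones) yields the desired stratification. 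The only nontrivial input at any stage is the already assumed genericity of $\ul{\sI}$; everything else is a bookkeeping exercise with (i), (ii), and the previously established theorems, so I do not anticipate a serious obstacle.
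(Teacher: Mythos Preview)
Your proof is correct and follows essentially the same route as the paper: verify restriction stability of $\ul{\sI}$, invoke Proposition \ref{ind_constructible_property} to get ind-constructibility, then prove stability under generization via the chain $\sI(A,\fq)=\sI(A|_\fp,\fq/\fp)\le\sI(A|_\fp,\fp/\fp)=\sI(A,\fp)\le\sI(A,0)$ and conclude with Theorem \ref{ind_constructible_stable_open} and Lemma \ref{stratification_lemma}. You are in fact more explicit than the paper, which dismisses restriction stability as ``obvious'' and does not spell out why $\sI$ is constant on each stratum; your use of axiom (ii) to derive $\sI(A,\fq)=\sI(A,\xi_i)$ on $\ul{\sI}\mrm{Gen}(A|_{\xi_i})$ fills that gap cleanly.
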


\begin{proof}
It is obvious that $\ul{\sI}$ is stable under restrictions. If $\fq \sups \fp$, then $\fq/\fp \sups \fp/\fp$ and so by assumption 
\[
\sI(A,\fq) = \sI(A|_\fp,\fq/\fp) \leq \sI(A|_\fp,\fp/\fp) = \sI(A,\fp) \leq \sI(A,0) \;.
\]
Hence, if $(0),\fq \in \ul{\sI}\mrm{Gen}(A)$, then $\sI(A,\fq) = \sI(A,0)$ and the inequalities above imply that $\sI(A,\fp) = \sI(A,0)$, so $\fp \in \ul{\sI}\mrm{Gen}(A)$. This shows that $\ul{\sI}$ is stable under generization and so Theorem \ref{ind_constructible_stable_open} implies that $\ul{\sI}$ is open. Finally, it is clear that $\ul{\sI}$ is generically true and so the claim about the stratification follows immediately from Lemma \ref{stratification_lemma}.
\end{proof}

\subsection{La fin—application to decomposition maps} \label{lafin}

We will now come to the end of this paper and prove that $\mrm{DecGen}(A)$ is open whenever $A$ has split fibers and its base ring is noetherian. To do this, we study the specialization invariant $\mrm{JacDim}: \wt{\fF} \rarr (\bbN,\geq )$ mapping $(A,\fp)$ to $\dim_{\rk(\fp)} \Jac(A(\fp))$. The property $\ul{\mrm{JacGen}}$ we studied in the paragraphs above is clearly the property associated to this invariant in the sense of Lemma \ref{monotone_invariant}. Our aim is to show that $\mrm{JacDim}$ is monotone on $\fF_{\mrm{n}}^{\mrm{spl}}$.

\begin{theorem} \label{jac_dim_spec_geq_general}
Suppose that $R$ is noetherian and let $\fp \in \Spec(R)$ such that $A(\fp)$ splits. Then 
\[
\dim_{\rk(\fp)} \Jac(A(\fp)) \geq \dim_K \Jac(A^K) \;.
\]
\end{theorem}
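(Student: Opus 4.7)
The plan is to reduce to the setting of a discrete $A$-gate so that we can compare $\Jac(A^K)$ and $\Jac(A(\fp))$ through a common field, exploiting the fact that submodules of finitely generated free modules over a DVR are themselves free. The inequality is trivial when $\fp = (0)$, so I assume $\fp$ is non-zero. Since $R$ is noetherian and $A(\fp)$ splits, Theorem~\ref{all_decs_are_discrete} (more precisely its proof, which combines Grothendieck's Theorem~\ref{discrete_exists} with Lemma~\ref{split_gates_exist}) produces a discrete $A$-gate $(\sO,\fm)$ in $\fp$.

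Next I would show that $\dim_{\rk(\fm)} \Jac(A^\sO(\fm)) \geq \dim_K \Jac(A^K)$ by the same mechanism used at the start of the proof of Theorem~\ref{tits_deformation_geck}. Explicitly, Lemma~\ref{jac_spec_dim_lemma} applied to the $\sO$-algebra $A^\sO$ at the maximal ideal $\fm$ gives
\[
\Jac(A^\sO(\fm)) \supseteq \mrm{ext}_{A^\sO}^{\rk(\fm)} \circ \mrm{con}_{A^\sO}^K \Jac(A^K).
\]
Because $A^\sO$ is $\sO$-free and $\sO$ is a DVR, the submodule $\mrm{con}_{A^\sO}^K \Jac(A^K) \leq A^\sO$ is finitely generated and $\sO$-free. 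Its rank equals $\dim_K \Jac(A^K)$ since extending back to $K$ recovers $\Jac(A^K)$ by \S\ref{submodule_stuff}, so its reduction modulo $\fm$ has $\rk(\fm)$-dimension $\dim_K \Jac(A^K)$. This gives the asserted inequality at the level of $A^\sO(\fm)$.

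Finally I would transfer this inequality from $\fm$ back down to $\fp$. Since $A(\fp)$ splits, the identity $\mrm{ext}_{A(\fp)}^{\rk(\fm)} \Jac(A(\fp)) = \Jac(A^\sO(\fm))$ from \cite[7.9(i)]{CR-Methods-1}—already invoked in the proof of Theorem~\ref{tits_deformation_geck}—implies that passage to $\rk(\fm)$ preserves the $\rk(\fp)$-dimension of $\Jac(A(\fp))$. Chaining the equalities and inequality together yields
\[
\dim_{\rk(\fp)} \Jac(A(\fp)) = \dim_{\rk(\fm)} \Jac(A^\sO(\fm)) \geq \dim_K \Jac(A^K),
\]
which is the desired conclusion. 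The only non-routine step is ensuring that a discrete $A$-gate actually exists in $\fp$, but this is precisely what we have arranged via the noetherianness hypothesis and the splitting of $A(\fp)$; everything else is a matter of carefully bookkeeping ranks of free modules over the DVR $\sO$ and using the compatibility of Jacobson radicals with field extensions in the split case.
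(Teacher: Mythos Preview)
Your proposal is correct and follows essentially the same route as the paper: pick a discrete $A$-gate $(\sO,\fm)$, use Lemma~\ref{jac_spec_dim_lemma} to get $\Jac(A^K)(\fm)\subseteq\Jac(A^\sO(\fm))$, compute the $\rk(\fm)$-dimension of the left side via freeness of $\mrm{con}_{A^\sO}^K\Jac(A^K)$ over the DVR, and descend to $\fp$ using that $A(\fp)$ splits. The only point you leave implicit is that $\dim_{\rk(\fm)}\mrm{ext}_{A^\sO}^{\rk(\fm)}(J)$ really equals $\dim_\sO J$ (not merely $\leq$); the paper secures this by noting that the quotient $A^\sO/J$ is $\sO$-torsion free (hence free), so the short exact sequence stays exact after reducing modulo~$\fm$.
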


\begin{proof}
Because of \S\ref{noeth_case} there is a discrete $A$-gate $(\sO,\fm)$ in $\fp$. Let $J \dopgleich \mrm{con}_{A^\sO}^K(\Jac(A^K)) = A^\sO \cap \Jac(A^K)$.  According to \S\ref{submodule_stuff} the quotient $C \dopgleich A^\sO/J$ is $\sO$-torsion free and thus already $\sO$-free since $\sO$ is a discrete valuation ring. Since $A^\sO$ is $\sO$-torsion free, also $J$ is already $\sO$-free. Moreover, by \S\ref{submodule_stuff} we have
\[
\mrm{ext}_{A^\sO}^K(J) = \mrm{ext}_{A^\sO}^K \circ \mrm{con}_{A^\sO}^K(\Jac(A^K)) = \Jac(A^K)
\]
and therefore $\dim_\sO J = \dim_K \Jac(A^K)$. Note that
\[
 \mrm{ext}_{A^\sO}^{\rk(\fm)}(J)  = \mrm{ext}_{A^\sO}^{\rk(\fm)} \circ \mrm{con}_{A^\sO}^{\rk(\fm)} ( \Jac(A^K)) = \Jac(A^K)(\fm)  \subs \Jac(A^\sO(\fm))
 \]
 by Lemma \ref{jac_spec_dim_lemma} so that
 \[
 \dim_{\rk(\fm)}  \mrm{ext}_{A^\sO}^{\rk(\fm)}(J) \leq \dim_{\rk(\fm)}  \Jac(A^\sO(\fm)) \;.
 \] 
  By \S\ref{submodule_stuff} we have
\[
C^{\rk(\fm)} \cong (A^{\sO})^{\rk(\fm)}/\mrm{ext}_{A^{\sO}}^{\rk(\fm)}(J) = A^{\sO}(\fm)/\mrm{ext}_{A^{\sO}}^{\rk(\fm)}(J) 
\]
and therefore
\begin{align*}
\dim_\sO A^\sO - \dim_\sO J & = \dim_\sO C = \dim_{\rk(\fm)} C^{\rk(\fm)} \\
& = \dim_{\rk(\fm)} A^{\sO}(\fm) - \dim_{\rk(\fm)} \mrm{ext}_{A^{\sO}}^{\rk(\fm)}(J) \;,
\end{align*}
implying that 
\[
\dim_K \Jac(A^K) = \dim_\sO J = \dim_{\rk(\fm)} \mrm{ext}_{A^{\sO}}^{\rk(\fm)}(J) \leq \dim_{\rk(\fm)}  \Jac(A^\sO(\fm))\;.
\]
But since $A(\fp)$ is splits we have 
\[
\dim_{\rk(\fp)} \Jac(A(\fp)) = \dim_{\rk(\fm)} \Jac(A^{\sO}(\fm)) \geq 
\dim_K \Jac(A^K)  \;.
\]
\end{proof}

The preceding theorem in combination with Lemma \ref{monotone_invariant} immediately implies the following corollary, which in turn proves Theorem \ref{dec_stratification}.

\begin{corollary}
$\mrm{JacDim}$ is a monotone specialization invariant on $\fF_{\mrm{n}}^{\mrm{spl}}$. In particular, if $R$ is noetherian and $A$ has split fibers, then $\ul{\mrm{JacDim}}\mrm{Gen}(A) = \mrm{DecGen}(A)$ is open. 
\end{corollary}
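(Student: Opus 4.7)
The plan is to verify the two defining conditions of a monotone specialization invariant for $\mrm{JacDim}$ on $\fF_{\mrm{n}}^{\mrm{spl}}$ and then invoke Lemma \ref{monotone_invariant} together with the identification $\ul{\mrm{JacDim}} = \mrm{Dec}$ on this class established in Corollary \ref{decgen_cap_spl_eq_jacdimgen_cap_spl}.

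First, I would check that $\fF_{\mrm{n}}^{\mrm{spl}}$ is itself a restriction stable class (as required by the definition in Lemma \ref{monotone_invariant}). For $A_{/R} \in \fF_{\mrm{n}}^{\mrm{spl}}$ and $\fp \in \Spec(R)$, the restriction $A|_\fp = A/\fp A$ is finite free over $R/\fp$ of the same rank as $A$ over $R$ (noted in the Specializations section), the quotient $R/\fp$ of a noetherian integral domain is again a noetherian integral domain, and the fibers of $A|_\fp$ are $A|_\fp(\fq/\fp) \cong A(\fq)$ for $\fq \supseteq \fp$, which split by hypothesis on $A$. Hence $A|_\fp \in \fF_{\mrm{n}}^{\mrm{spl}}$.

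Next, I would verify the two axioms for $\mrm{JacDim} : \wt{\fF_{\mrm{n}}^{\mrm{spl}}} \to (\bbN, \geq)$. Recall that the ordering on the target is reversed, so the inequality $\sI(A,\fp) \leq \sI(A,0)$ unfolds to $\dim_{\rk(\fp)} \Jac(A(\fp)) \geq \dim_K \Jac(A^K)$. This is precisely the content of Theorem \ref{jac_dim_spec_geq_general}, which applies because $A(\fp)$ splits on this class. The second axiom, $\sI(A|_\fp,\fq/\fp) = \sI(A,\fq)$, reduces to the equality $\dim_{\rk(\fq/\fp)}\Jac(A|_\fp(\fq/\fp)) = \dim_{\rk(\fq)} \Jac(A(\fq))$, which follows directly from the canonical isomorphism $A|_\fp(\fq/\fp) \cong A(\fq)$ of $\rk(\fq)$-algebras together with the identification $\rk(\fq/\fp) = \rk(\fq)$. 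This establishes that $\mrm{JacDim}$ is a monotone specialization invariant on $\fF_{\mrm{n}}^{\mrm{spl}}$.

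Now I would apply Lemma \ref{monotone_invariant}. To invoke its conclusion that the associated property $\ul{\mrm{JacDim}}$ is open and generically true, I need it to be generic on $\fF_{\mrm{n}}^{\mrm{spl}}$. Theorem \ref{jac_dim_spec_eq_open} asserts precisely that $\ul{\mrm{JacDim}}\mrm{Gen}(A)$ is a neighborhood of the generic point whenever $R$ is noetherian and $A^K$ splits; on $\fF_{\mrm{n}}^{\mrm{spl}}$ the intersection with $\mrm{SplGen}(A)$ is trivial since $\mrm{SplGen}(A) = \Spec(R)$. Hence Lemma \ref{monotone_invariant} gives openness of $\ul{\mrm{JacDim}}\mrm{Gen}(A)$.

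Finally, I would invoke Corollary \ref{decgen_cap_spl_eq_jacdimgen_cap_spl} to identify $\mrm{DecGen}(A) = \ul{\mrm{JacDim}}\mrm{Gen}(A)$ under the split-fiber hypothesis (again using $\mrm{SplGen}(A) = \Spec(R)$), yielding openness of $\mrm{DecGen}(A)$. Essentially no step here presents a serious obstacle — all the work has been done in the preceding sections; the corollary is a packaging of Theorem \ref{jac_dim_spec_geq_general}, Corollary \ref{decgen_cap_spl_eq_jacdimgen_cap_spl}, and the formal machinery of monotone invariants. The only subtle point to check carefully is the matching of the reversed ordering on $\bbN$ with the direction of the inequality in Theorem \ref{jac_dim_spec_geq_general}.
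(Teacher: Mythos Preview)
Your proof is correct and follows the same approach as the paper, which simply states that the corollary is immediate from Theorem \ref{jac_dim_spec_geq_general} together with Lemma \ref{monotone_invariant}. You have just spelled out in detail the verifications (restriction stability of the class, the two monotone-invariant axioms, genericity via Theorem \ref{jac_dim_spec_eq_open}, and the identification with $\mrm{DecGen}$ via Corollary \ref{decgen_cap_spl_eq_jacdimgen_cap_spl}) that the paper leaves implicit.
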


\section{Open questions} \label{questions}

We finish with a list of open questions we think are worth investigating. 

\begin{enumerate}
\item How can we explicitly describe the decomposition discriminant $\fd_A$? Can we extract from it a ``generalized Schur element'' for each simple module, which repairs those for which the actual Schur element is zero and thus does not contain any information?

\item What are further geometric properties of $\mrm{DecEx}(A)$? Is it either empty or pure of codimension one (if not in general, in which general situations is this true)? This is related to Remark \ref{codim1}.

\item What can be said without assuming that the base ring $R$ is noetherian? Is it still true that $\ul{\mrm{JacGen}}(A) = \mrm{DecGen}(A)$? Is it still true that $\ul{\mrm{JacGen}}(A) $ is open if $A$ has split fibers?

\item Similarly to the last question it would be important to know if we can remove splitting assumptions from our results. This is probably difficult and does not lead to nice results as the non-genericity of the semisimplicity property shows in the non-split case. On the other hand, condition \ref{gate_definition_G} in Definition \ref{gate_definition} is weaker than the special fiber being split—this is just the easiest situation where it holds. In which general situations do we know that this condition holds?

\item If $A^K$ splits, is it true that the split locus $\mrm{SplGen}(A)$ is not only ind-constructible but even open? To us it is not clear why the splitting property should be stable under generization. 

\item It would be very helpful (also in applications) to have a general composability result of decomposition maps as discussed in Remark \ref{dec_composability}. For this one might have to pick an appropriate choice of $A$-gates. In principle one might be able to deduce ind-constructibility and stability under generization of $\mrm{DecGen}(A)$ in this way.

\item We have proven that triviality of a decomposition map does not depend on the choice of the $A$-gate. What is the general relationship between decomposition maps for different $A$-gates in a fixed prime? Is there some sort of a Galois automorphism interchanging the decomposition matrices as it is in the case of finite groups?

\item In general it would be interesting to have counter-examples for everything which does not hold in general. These seem to be hard to find as all well-understood cases (e.g., group algebras or Hecke algebras) are too well-behaved.
\end{enumerate}

\bibliography{references}{}
\bibliographystyle{plain}

\end{document}